\numberwithin{equation}{section}
\newtheorem{theo}{Theorem}
\numberwithin{theo}{section}
\newtheorem{coro}[theo]{Corollary}
\newtheorem{prop}[theo]{Proposition}
\newtheorem{defi}[theo]{Definition}
\newtheorem{defiprop}[theo]{Definition-Proposition}
\newtheorem{lemm}[theo]{Lemma}
\newtheorem{expl}[theo]{Example}
\newtheorem{remq}[theo]{Remark}
\newtheorem{prob}[theo]{Problem}
\newcommand{\Z}{\mathbb{Z}}
\newcommand{\N}{\Z_{\geq 0}}
\newcommand{\Q}{\mathbb{Q}}
\newcommand{\F}{\mathbb{F}}
\newcommand{\mO}[1][]{\mathcal{O}_{#1}}
\newcommand{\mOi}{\mO[1]}
\newcommand{\mOd}{\mO[2]}
\newcommand{\B}{\mathcal{B}_{p,\infty}}
\def\commutatif{\ar@{}[rd]^<<<{\circlearrowleft}}
\newcommand{\IsomorphismCompletion}{\textsc{IsomorphismCompletion}}
\newcommand{\LocalGenerator}{\textsc{LocalGenerator}}
\newcommand{\IsomorphismEO}{\textsc{IsomorphismE0}}
\newcommand{\ProductSurfacesIsomorphism}{\textsc{ProductSurfacesIsomorphism}}
\newcommand{\ProductIsomorphism}{\textsc{ProductIsomorphism}}
\DeclareMathOperator{\Conn}{Conn}
\DeclareMathOperator{\Hom}{Hom}
\DeclareMathOperator{\Trd}{Trd}
\DeclareMathOperator{\Nrd}{Nrd}
\DeclareMathOperator{\End}{End}
\DeclareMathOperator{\Aut}{Aut}
\DeclareMathOperator{\val}{val}
\DeclareMathOperator{\rgcd}{rgcd}
\DeclareMathOperator{\Mat}{M}
\DeclareMathOperator{\GL}{GL}
\begin{document}

%% Title, authors and addresses

%% use the tnoteref command within \title for footnotes;
%% use the tnotetext command for theassociated footnote;
%% use the fnref command within \author or \affiliation for footnotes;
%% use the fntext command for theassociated footnote;
%% use the corref command within \author for corresponding author footnotes;
%% use the cortext command for theassociated footnote;
%% use the ead command for the email address,
%% and the form \ead[url] for the home page:
\title[Computing Isomorphisms between Products of Supersingular Curves]{Computing Isomorphisms between Products of Supersingular Elliptic Curves}

\author{Pierrick Gaudry, Julien Soumier, Pierre-Jean Spaenlehauer
}

\maketitle

%% Abstract
%% Text of abstract
\begin{abstract}
  The Deligne-Ogus-Shioda theorem guarantees the existence of isomorphisms between
  products of supersingular elliptic curves over finite fields.
  In this paper, we present an algorithm for explicitly computing these isomorphisms given
  the endomorphism rings of the curves. Under GRH, it is proved to run in expected
  polynomial time. Our approach leverages the Deuring correspondence,
  enabling us to reformulate computational isogeny problems into algebraic problems in quaternions.
  Specifically, we reduce the computation of isomorphisms to solving systems of quadratic
  and linear equations over the integers derived from norm equations.
  We develop $\ell$-adic techniques for solving these equations when we have access to
  a low discriminant subring. Combining these results leads to the description of an
  efficient probabilistic Las Vegas algorithm for computing the desired isomorphisms.
\end{abstract}

%\begin{keyword}
%Superspecial abelian surfaces \sep Isogenies
%\end{keyword}

%%Graphical abstract
%\begin{graphicalabstract}
%\includegraphics{grabs}
%\end{graphicalabstract}
%%Research highlights
% \begin{highlights}
% \item Research highlight 1
% \item Research highlight 2
% \end{highlights}
%% Keywords
%% keywords here, in the form: keyword \sep keyword
%% Add \usepackage{lineno} before \begin{document} and uncomment 
%% following line to enable line numbers
%% \linenumbers

%% ------------------------------------- Main text

\section{Introduction}

Algorithms for computing isogenies between elliptic curves have been a
vast field of research, leading to the recent development of higher-dimensional techniques
in cryptography~\cite{ComputingIsogeniesTheta, Algo_22_isogenies}.
In particular, abelian varieties of dimension $g \geq 2$
isomorphic to a product of supersingular elliptic curves play an important role
in this setting~\cite{KeyRecoverySIDH_CD, KeyRecoverySIDH_MM, robert2023breaking, 2DWest, PRISM}.
An important feature of such abelian varieties
is that they are all isomorphic over an algebraic closure. 
Let $\mathbb F_q$ be a finite field
of characteristic $p>0$. An abelian variety defined over
$\mathbb F_q$ is \emph{superspecial} if it is $\overline{\mathbb F_q}$-isomorphic to a
product of supersingular elliptic curves defined over $ \overline{\mathbb F_q}$. The
Deligne-Ogus-Shioda theorem~\cite[Thm.~3.5]{DelignTheo} states that for all $g>1$,
all dimension-$g$ superspecial abelian varieties
defined over $\F_q$ are $\overline{\mathbb F_q}$-isomorphic (as unpolarized
abelian varieties). The aim of this paper is to investigate computational
aspects of this theorem.

\begin{prob}[Effective Deligne-Ogus-Shioda problem]\label{prob:effDOS}
  Let $g \geq 2$ be an integer.
  Given supersingular elliptic curves $E_1, \dots,  E_g$ and $E_1', \dots, E_g'$ defined over
  $\mathbb F_q$, compute an
  $\overline{\mathbb F_q}$-isomorphism $E_1\times \cdots \times E_g \to E_1'\times \cdots \times  E_g'$.
\end{prob}

This appears to be a difficult computational problem. Indeed, computing the  
endomorphism ring of a supersingular curve is a computational problem
which is considered hard, and the security of several cryptographic
constructions relies on it~\cite{2DWest, PRISM}. Solving Problem~\ref{prob:effDOS} would provide non-trivial
information about the endomorphism rings of the curves:
From an isomorphism $E_1\times E_2 \to E_1' \times E_2'$, we can compute four isogenies
$\varphi_{ij}:E_j\to E_i'$, and the composition
$\widehat \varphi_{21} \varphi_{22} \widehat \varphi_{12} \varphi_{11}:E_1 \to
E_1$ is in general a non-trivial endomorphism of $E_1$. 
In this paper, we study Problem~\ref{prob:effDOS} in the
context where we have extra information: the endomorphism rings of the elliptic curves are given (see Section~\ref{sc:endoring_encoding} for technical details on the encoding of the endomorphism rings). 
In this setting, Deuring's correspondence allows us to translate
Problem~\ref{prob:effDOS} into a problem about quaternion algebras.

\subsection{Contributions}
We focus on the case $g=2$, which is the base case which serves as a building
block for the general case $g\geq 2$.
Therefore our main problem is the computation of an isomorphism $E_1\times
E_2\to E_1'\times E_2'$ between two products of supersingular elliptic curves, assuming that their
endomorphisms rings are known. Endomorphism rings are given via an efficient
representation of a $\Z$-basis together with an explicit isomorphism with a maximal
order in the quaternion algebra $\B$.
Our main contribution is a polynomial-time algorithm that computes an isomorphism $E_1\times E_2\to E_1'\times E_2'$ between
products of maximal elliptic curves over $\F_{p^2}$, assuming that we know the endomorphism rings of the curves.
This algorithm relies on two main subroutines. The first one
describes how to build a two by two matrix of isogenies which is an isomorphism,
given its first column. The second one allows us to compute isomorphisms of
the form $E^2 \to E_1' \times E$ when we know a non-scalar low-discriminant
endomorphism in $\End(E)$.

In order to design such algorithms, we need some new computational
techniques. We provide a
quasi-linear quaternionic method to divide an endomorphism by an isogeny, see
Proposition~\ref{prop:complexityfactor}.
Our main theoretical tool is a necessary and sufficient criterion to decide
whether a pair of separable isogenies $\varphi_{11}:E_1\to E_1'$, $\varphi_{21}:E_1\to
E_2'$ of coprime degrees can appear as the first column of a matrix $(\varphi_{ij})_{i,j\in\{1, 2\}}$
describing an isomorphism $E_1\times E_2\to E_1'\times E_2'$: this happens
when the direct sum of the kernels of $\varphi_{11}$ and
$\varphi_{21}$ is the kernel of an isogeny $E_1\to E_2$. This result is formalized in
Theorem~\ref{theo:isom_iff} and is used in our algorithms for both subroutines.
In the low-discriminant case, we also use the fact that we can solve efficiently norm
equations in low-discriminant imaginary quadratic orders.
In both cases, we use Wesolowski's
variant~\cite{WesoFocs21} of the KLPT algorithm~\cite{KLPT} as an
important building block, whose complexity is proved under GRH.

% \smallskip
% The general problem of computing isomorphisms between superspecial abelian
% surfaces seems to be hard. Indeed, the techniques that we developed for
% the special cases require more degrees of freedom that what is available
% in the general case. Furthermore, randomly constructed isogenies for the
% first column of the matrix have no chance to produce a valid input for
% our criterion in Theorem~\ref{theo:isom_iff}.
% We therefore propose a cryptographic construction built on the
% difficulty of this problem. The main interest of this construction is that
% most algebraic computations can be done in the quaternion algebra $\B$ since
% the endomorphism rings and their embedding in $\B$ are public. This is a significant
% difference compared to other isogeny-based cryptographic protocols where the
% knowledge of the endomorphism ring in a quaternion algebra is usually
% sufficient to break the cryptosystem. The security of this cryptosystem relies
% on heuristic assumptions which are similar to a heuristic used foonsequentlyr the security of
% SQIsign~\cite{de2020sqisign}. In
% particular, given a product of supersingular curves $E_1\times E_2$, we can use
% the algorithm with partial control over the codomain to generate a secret
% isomorphism $E_1\times E_2\to E_1'\times E_2'$ where the pair of j-invariants
% $(j(E_1'), j(E_2'))$ is heuristically undistinguishable from the uniform
% distribution on pairs of j-invariants of supersingular elliptic curves.
% Combining this with a masking technique using automorphisms provides us with an
% authentication protocol.

To demonstrate some of the algorithms presented in this
paper, we provide a proof-of-concept implementation in the computer algebra software \texttt{Magma},
freely available at~\cite{artifact}.
%At \url{https://gitlab.inria.fr/superspecial-surfaces-isomorphisms/experiments},
%we provide a proof-of-concept implementation in the computer algebra software \texttt{Magma}, 
%which demonstrates some of the algorithms presented in this
%paper.
We only compute the quaternionic part of the isomorphisms,
meaning that we output four ideals $\{I_{ij}\}_{1\leq i,j\leq 2}$, which are
kernel ideals of four isogenies $\varphi_{I_{ij}}$, which
form a matrix that represents an isomorphism. To recover the isogenies, one can use \textsf{IdealToIsogeny} algorithms,
described for example in~\cite{2DWest,IdealToIsogeny_ON}; see also~\cite[Lem.~4]{GalPetSil17} which provides a general polynomial-time complexity bound for this step. 

\subsection{Related works} Superspecial abelian varieties are central objects
in the recent developments of \emph{isogeny-based cryptography}, as they are
the main characters of the new
high-dimensional techniques, see e.g.~\cite{robert2023breaking,HighDim_Dartois,Algo_22_isogenies}. Being able to
compute isomorphisms between such objects would be a useful computational tool.
In particular, one typical setting is to consider a special curve which has the
property that its endomorphism ring contains a low-discriminant imaginary
quadratic order. For instance, when $p\equiv 3\bmod 4$, the
endomorphism ring of the elliptic curve $E_0$ defined over $\mathbb F_{p^2}$ by
the equation $y^2=x^3+x$ contains a subring isomorphic to $\mathbb Z[i]$.
Being able to compute an isomorphism between a superspecial abelian
variety and $E_0^g$ would give access to these
low-discriminant subrings of endomorphisms. Another application of our work is
the representation of some polarized abelian varieties. 
In particular, the Ibukiyama-Katsura-Oort correspondence shows that superspecial principally polarized abelian surfaces (up to polarized isomorphisms)
can be represented via two by two matrices over the quaternions (modulo
congruence). Algorithms have been recently developed in~\cite{KLPT2} to perform efficient
computations via this representation. Our algorithms contribute to this
toolbox since isomorphisms between products of supersingular elliptic
curves can be used to compute the Ibukiyama-Katsura-Oort representation of
principally polarized abelian surfaces that are not Jacobians of genus-$2$
curves.

In the direction of the construction of isomorphisms between products of
elliptic curves, one can find in the proof of \cite[Thm~A.1]{TheoremAnnexe}
an explicit construction of an isomorphism between the $E \times E/(K_1 + K_2)$
and $E/K_1 \times E/K_2$ for $K_i$ finite étale subgroups of coprime orders.
This result has similarities with our Theorem~\ref{theo:isom_iff},
but it is not general enough for our purposes.
% The main difference is that we give a necessary and sufficient condition
% for a couple of isogenies $(\varphi_1, \varphi_2)$ to be completed into a matrix
% $(\varphi_{ij})_{1 \le i,j \le 2}$ representing an isomorphism; whereas
% in~\cite[Theorem A.1]{TheoremAnnexe} the authors only gives a sufficient
% condition, and the first raw of their matrix is not the vector $(\varphi_1, \varphi_2$)
% but a slighty twisted vector. Nevertheless their construction is more explicit.

\subsection{Organization of the paper}

Section~\ref{sec:background} recalls background material. We state our main results in Section~\ref{sc:main}. In Section~\ref{sec:tools}, we develop computational tools
that will be required in the main algorithms. Section~\ref{sec:algo_isom} is devoted to the
computation of isomorphisms between products of supersingular elliptic curves
provided that we know their endomorphism rings.

\subsection{Acknowledgements}
We thank Benjamin Wesolowski for providing the ideas that led to the
general algorithm in Section~\ref{sc:generalcase}.
We thank Jean Kieffer and Damien Robert for fruitful discussions, and the Isocrypt people for
their interest.
We thank Marc Houben for pointing us to~\cite[Thm~A.1]{TheoremAnnexe}.
This work received funding from the France 2030 program managed by
the French National Research Agency under grant agreements No. ANR-22-PETQ-0008
PQ-TLS and ANR-22-PECY-0010.
% This work received funding from the France 2030 program managed by
% the French National Research Agency under grant agreement No. ANR-22-PETQ-0008 PQ-TLS.

\section{Background}\label{sec:background}

For an elliptic curve $E$ over a field $k$, we denote
by $\End(E)$ its ring of endomorphisms defined over $\overline k$, the algebraic closure of $k$.
We assume that the characteristic $p$ of finite fields is greater than
$3$. For $p=2,3$, the algorithmic questions discussed in this paper are trivial
since all supersingular elliptic curves are isomorphic.

Throughout this paper, we use the formalism of group schemes to describe
\emph{kernels} of (non-necessarily separable) isogenies, so that any nonzero
isogeny (even if it is purely inseparable) has a non-trivial
kernel. We refer to \cite{waterhouse1969abelian} for more details on this
formalism. In particular, for an elliptic curve $E$ defined over $\overline{\F_p}$, there
are bijections between proper left-ideals
in $\End(E)$, finite group subschemes in $E$, and isogenies with domain $E$ up to
post-composition by isomorphisms. This follows from the fact that all
left-ideals in $\End(E)$ are \emph{kernel ideals}, see \cite[Thm.~3.15]{waterhouse1969abelian}
for the cases where $\End(E)$ has
rank $1$ or $4$, and \cite[Thm.~20.(a)]{kani2009products} for the CM-case.
We also use the following convenient notation: given a
finite subgroup scheme $K$ of an elliptic curve $E$, we let $E\to E/K$ denote the geometric
quotient of $E$ by $K$, where $K$ acts by translation. Therefore, an elliptic curve
$E'$ is isomorphic to $E/K$ if and only if there exists an isogeny $E\to E'$
whose kernel is $K$. We call the map $E\to E/K$ the canonical isogeny
with kernel $K$.

Elliptic curves over $\mathbb F_{p^2}$ whose number of rational points equals the Hasse-Weil
upper bound $(p+1)^2$ are called \emph{maximal}.
% \begin{defi}
%   Let $E$ be an elliptic curve over $\F_{p^2}$. We say that $E$
%   is \emph{maximal} when $\#E(\F_{p^2})=(p+1)^2$.
% \end{defi}

\begin{remq}\label{remq:Fp2model}
  Any supersingular curve $E$ over a field $k$ of characteristic $p>0$ is
  $\overline k$-isomorphic to a curve defined over $\F_{p^2}$, see~\cite[Prop.~42.1.7]{QuatAlg}.
  Therefore, for computational purposes, it is convenient to consider
  supersingular curves defined over $\F_{p^2}$. Moreover any such curve is
  $\overline{\mathbb F_p}$-isomorphic to a maximal curve $E'$
  (see~\cite[Lem.~4]{Deuring4People} and \cite[Prop.~5.1]{AbVarMax}), which
  has the convenient property that all endomorphisms and isogenies with domain
  $E$ are also defined over $\mathbb F_{p^2}$, see \cite[Lem.~5.7]{AbVarMax}.
\end{remq}

\subsection{Deuring correspondence}

We recall key concepts of the Deuring correspondence.
 For a more comprehensive study of the subject,
we refer to \cite{LerouxThesis} and \cite{QuatAlg}.

\subsubsection{Quaternion algebras.}

Let $p$ be a prime. We focus on the (unique up to isomorphism) quaternion
algebra $\B$ over $\mathbb Q$ which is ramified at $p$ and $\infty$.  The algebra
$\B$ is non-commutative, and it has dimension $4$ over $\Q$. When $p \equiv 3
\bmod 4$, a $\Q$-basis is $1,i,j,k$, where $i^2= -1$, $j^2= -p$ and $k = i\, j= - j\, i$.

Any element $\alpha \in \B$ can be encoded by coordinates $(\alpha_0, \alpha_1,
\alpha_2, \alpha_3) \in \Q^4$, such that $\alpha = \alpha_0 + \alpha_1 i  +
\alpha_2 j + \alpha_3 k$. The \emph{conjugate} of
$\alpha=\alpha_0 + \alpha_1 i+\alpha_2 j +\alpha_3 k\in \B$ is
$\overline{\alpha}=\alpha_0 - \alpha_1 i  - \alpha_2 j - \alpha_3 k$. Its
\emph{reduced trace}
is $\Trd(\alpha)=\alpha + \overline\alpha = 2\alpha_0$ and its
\textit{reduced norm} is $\Nrd(\alpha)=\alpha \cdot \overline{\alpha} =
\alpha_0^2 + \alpha_1^2 + p(\alpha_2^2+\alpha_3^2) \in \Q$.  Every nonzero
$\alpha\in\B$ is invertible, i.e., there exists a unique $\beta\in\B$ such
that $\alpha\cdot\beta = \beta\cdot\alpha = 1$. 

% We now focus on subrings
% in $\B$ involved in the Deuring correspondence:

  An \textit{order} in $\B$ is a subring which has rank $4$ as a
  $\Z$-module.
  An order is \textit{maximal} when it is not contained in a strictly larger order.
% \begin{expl}
%   Assume that $p \equiv 3 \bmod 4$.
%   A non-maximal order of $\B$ is $\Z[i,j]$. This order is contained in
%   $\Z [ i, \frac{1+k}{2}]$ \cite[Lem.~2]{KLPT}, which is maximal.
% \end{expl}
A fractional ideal $I$ is a rank-$4$ $\Z$-module in $\B$. The \emph{left and right
orders} of $I$ are defined as:
\[\mathcal{O}_{L}(I) = \{ \alpha \in \B : \alpha I \subset I \}, \quad
    \mathcal{O}_{R}(I)  = \{ \alpha \in \B : I \alpha \subset I \}.\]

    When $I \subset \mathcal{O}_{L}(I)$ (or equivalently $I \subset
    \mathcal{O}_{R}(I)$, see \cite[Lem.~16.2.8]{QuatAlg}), we say that
  $I$ is an \textit{integral ideal}.
Integral ideals in maximal orders are actually locally principal~\cite[Cor.~17.2.3]{QuatAlg}.
This implies that the completion $I \otimes \Z_{\ell}$ of an ideal $I$ in an order $\mO$ at a
prime $\ell$ unramified in $\B$ generates a principal ideal in
$\mO\otimes\Z_\ell\cong \Mat_2(\Z_\ell)$. We study localizations in more
details in Section~\ref{section:adic}. Remark that any integral ideal $I$ is a
left-$\mathcal{O}_{L}(I)$ ideal, and a right-$\mathcal{O}_{R}(I)$ ideal. In the following we will
use basic results and terminology about ideals in quaternion orders and their norms,
as exposed in~\cite[Chapter 16 and 17]{QuatAlg}.
In particular,  
the set $\Conn(\mO[1],\mO[2])$ of \emph{connecting ideals} between orders $\mO[1], \mO[2]\subset \B$ is the set of invertible lattices $I$ such that $\mathcal O_L(I) = \mO[1]$ and $\mathcal O_R(I) = \mO[2]$.

% \begin{defi}
%   Let $\mO[1]$ and $\mO[2]$ be quaternion orders in $\B$. If there exists an invertible left $\mO[1]$-ideal $I$ such that
%   $\mathcal{O}_{L}(I)=\mO[1]$ and $\mathcal{O}_{R}(I)=\mO[2]$, we say that $\mO[1]$ and $\mO[2]$ are connected.
%   In that case we call $I$ a connecting ideal between $\mO[1]$ and $\mO[2]$.
%   We denote by $\Conn(\mathcal O_1,\mathcal O_2)$ the set of all connecting ideals between $\mOi$ and $\mOd$.
% \end{defi}
% \begin{remq}
%   For maximal orders $\mO[1]$ and $\mO[2]$ in $\B$, $\Conn(\mathcal O_1,\mathcal O_2)$ is non-empty.
%   Indeed it contains $d \mO[1]\mO[2]$ where $d= [ \mO[1] : \mO[1] \cap \mO[2] ]$, which is integral.
% \end{remq}
% \begin{defi}[Ideal norm]
%  \cite[Thm.~16.1.3]{QuatAlg} 
%  Let $I\subset\B$ be an ideal. The \emph{reduced norm} of $I$ is
%  $\Nrd(I) = \gcd(\{ \Nrd(\alpha) \, : \, \alpha \in I \})$.
%  Moreover, $\Nrd(I)^2 = [ \mathcal{O}_{L}(I) \, : \, I ] 
%  = [ \mathcal{O}_{R}(I) \, : \, I ]$.
% \end{defi}
% \begin{prop} \cite[Lem.~16.3.7]{QuatAlg} 
%   Let $\mathcal O_1, \mathcal O_2, \mathcal O_3\subset\B$ be three maximal
%   orders.
%   If $I\in\Conn(\mathcal O_1, \mathcal O_2)$ and $J\in\Conn(\mathcal
%   O_2,\mathcal O_3)$, then $I\cdot J\in\Conn(\mathcal O_1, \mathcal O_3)$ and $\Nrd(I\cdot
%   J)=\Nrd(I)\cdot\Nrd(J)$.
% \end{prop}

\subsubsection{The correspondence.}
Endomorphism rings of supersingular elliptic curves are isomorphic to maximal
orders in $\B$. The purpose of Deuring correspondence is to provide a set of tools for representing
geometric objects related to supersingular elliptic curves as algebraic objects
in $\B$.

\begin{theo} \cite[Thm.~42.1.9]{QuatAlg} 
  Let $E$ be a supersingular elliptic curve defined over $\overline{\F_p}$.
  Then $\End(E)$ is isomorphic to a maximal order in $\B$.
\end{theo}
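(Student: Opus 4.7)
The plan is to proceed in three steps: first identify $\End(E) \otimes \mathbb{Q}$ as a quaternion algebra over $\mathbb{Q}$; then show that this algebra is ramified precisely at $p$ and $\infty$, so that it must be $\B$; and finally establish the maximality of $\End(E)$ inside it.

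For the first step, I would invoke the classical structure theorem for endomorphism rings of elliptic curves: $\End(E)$ is a torsion-free $\Z$-module whose rank is $1$, $2$, or $4$, the last case being the defining property of supersingularity. Since $E$ is supersingular, $\End(E)$ has $\Z$-rank $4$. Moreover $\End(E)$ is a domain: if $\varphi$ and $\psi$ are nonzero isogenies, then $\deg(\varphi\psi) = \deg(\varphi)\deg(\psi) > 0$, so $\varphi\psi\neq 0$. Hence $\End(E) \otimes \mathbb{Q}$ is a $4$-dimensional division algebra over $\mathbb{Q}$ with center $\Q$, which by the classification of central simple algebras is a quaternion algebra.

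For the second step, I would pin down the ramification. At the archimedean place, the dual-isogeny map $\varphi \mapsto \widehat{\varphi}$ is a positive involution with $\Nrd(\varphi) = \varphi\widehat{\varphi} = \deg(\varphi)$, a positive-definite quadratic form on $\End(E)\otimes \mathbb{R}$; hence this real algebra is the Hamilton quaternions, which is ramified at $\infty$. For every prime $\ell \neq p$, the Tate module $T_\ell(E)$ is free of rank $2$ over $\Z_\ell$ and carries a faithful $\End(E)\otimes\Z_\ell$-action, giving an embedding $\End(E)\otimes\Q_\ell \hookrightarrow \End_{\Q_\ell}(V_\ell(E)) \cong \Mat_2(\Q_\ell)$ which is an isomorphism by dimension count. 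So the algebra splits at all finite primes distinct from $p$, and by the parity of the number of ramified places of a quaternion algebra over $\Q$, it must ramify at $p$ as well. Since $\B$ is the unique quaternion algebra over $\Q$ with this ramification, we obtain $\End(E)\otimes\Q\cong\B$.

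For the third step, I would use the local-global characterization: an order is maximal if and only if it is maximal at every finite prime. At each $\ell \neq p$, the same Tate-module argument upgrades to an integral isomorphism $\End(E)\otimes \Z_\ell \cong \End_{\Z_\ell}(T_\ell(E)) \cong \Mat_2(\Z_\ell)$, which is maximal. The \emph{main obstacle} is at $\ell = p$: one has to show that $\End(E)\otimes\Z_p$ equals the unique maximal order in the division quaternion algebra $\B\otimes\Q_p$. This requires some $p$-adic input, typically a computation with the (covariant) Dieudonné module of the formal group $\widehat{E}$, which is a height-$2$ formal group over $\overline{\F_p}$: one identifies $\End(E)\otimes\Z_p$ with the endomorphism ring of this Dieudonné module, which by Dieudonné-Manin theory is precisely the maximal order of the unique quaternion division algebra over $\Q_p$. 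Combining these local maximalities yields the global maximality of $\End(E)$ and completes the proof.
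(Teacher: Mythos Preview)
The paper does not supply its own proof of this statement: it is quoted directly as \cite[Thm.~42.1.9]{QuatAlg} and used as background, with no argument given. So there is nothing to compare against line by line.

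Your sketch is the standard proof and is essentially correct; it is in fact close to the argument one finds in the cited reference. Two small points are worth tightening. First, in Step~1 you assert that $\End(E)\otimes\Q$ has center $\Q$ before you have ruled out the possibility that it is a degree-$4$ number field; strictly speaking, this only follows once your Step~2 shows $\End(E)\otimes\Q_\ell\cong\Mat_2(\Q_\ell)$ for some $\ell$, which forces non-commutativity. Second, the integral isomorphism $\End(E)\otimes\Z_\ell\cong\End_{\Z_\ell}(T_\ell(E))$ in Step~3 is Tate's theorem, which is stated over finite fields; to apply it over $\overline{\F_p}$ you implicitly use that a supersingular curve descends to $\F_{p^2}$ with all its endomorphisms (the paper recalls this in Remark~\ref{remq:Fp2model}). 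With these clarifications your three-step outline is complete.
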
 

Let $\mathcal O\subset\B$ be a maximal order isomorphic to the endomorphism
ring $\End(E)$ of a supersingular elliptic curve $E$ defined over $\overline{\F_q}$.
We will implicitly use the isomorphism $\End(E) \simeq \mO$.
There is an anti-equivalence between the category of supersingular elliptic curves over $\overline{\F_q}$
and the category of invertible left $\mathcal O$-modules. This anti-equivalence is
given explicitly via the contravariant functor $\Hom(\_, E)$, see \cite[Thm.~42.3.2]{QuatAlg}.
It establishes a dictionary
between the geometric world of supersingular elliptic curves
and the algebraic world of quaternion orders.

On the one hand, let $J$ be a left $\End(E)$-ideal. It defines a subgroup scheme
$E[J]:= \cap_{\alpha \in J}\ker \alpha$ in $E$, which is the kernel of an isogeny $\varphi_J:E \to E/E[J]$, see~\cite[42.2.1]{QuatAlg}. 
If $\varphi_J$ is separable, then
$E[J]=\{ P \in E(\overline {\F_q}) \mid \forall \alpha \in J, \, \alpha(P)=0 \}$.
On the other hand, let $\varphi: E \to E'$ be an isogeny. Then $I_\varphi:=\Hom(E',E)\varphi$ is a 
left $\End(E)$-ideal which connects the endomorphism rings of
$E$ and $E'\simeq E/\ker(\varphi)$, regarded as maximal orders in $\B$ up to
conjugation.
Moreover, for a left-ideal $J\subset\mathcal O$ and
$\psi:E \to E'$, we have that $J=I_{\varphi_{J}}$ and $\psi \cong \varphi_{I_{\psi}}$.
In particular we have a bijection between isomorphism classes (i.e., isogenies up
to post-composition by isomorphisms) of isogenies from $E$,
and left-ideals $I$ in $\mathcal O$. See \cite[Table~2.1]{LerouxThesis}
for a summary of the dictionary given by Deuring correspondence.

% \begin{table}
% \centering
%   \begin{tabular}{ | c || c |} 
%     \hline
%     Supersingular $j$-invariants over $\F_{p^2}$ & Isomorphism class of maximal order in $\B$  \\ 
    
%     $j(E)$ up to Galois conjugacy  & $\mathcal{O} \cong \End(E)$  \\ 
%     \hline
%      Isomorphism class of $\varphi:E \to E'$ & $I_{\varphi}$ integral left
%      $\mathcal O$-ideal  \\ 
%     \hline
%     $ \alpha \in \End(E)$ & principal ideal of $\mathcal O$ generated by the image of $\alpha$ \\
%     \hline

%     $\deg(\varphi)$ & $\Nrd( I_{\varphi})$ \\
%     \hline
%     $\widehat{\varphi}$ & $\overline{I_{\varphi}}$ \\
%     \hline
%     Composition $ \psi_2 \circ \psi_1: E_1 \to E_2 \to E_3$ & $I_{\psi_2 \circ \psi_1}
%                                                               =I_{\psi_1} I_{\psi_2}$ \\
%     \hline
%   \end{tabular}
%   \caption{Summary of the Deuring correspondence.\label{table:Deuring}}
% \end{table}

\subsection{Efficient representations of isogenies}

In order to implement the constructions of this work, we have to
define what is a good representation of an isogeny. 
We will use the notion of \emph{efficient representation}
designed in \cite{endringpb,EffRpz}.

  A \textit{representation} of an isogeny $\varphi: E\to E'$ between elliptic curves defined over $\F_q$, is a set of
  data that contains the domain, the codomain, the degree $\deg(\varphi)$, and
  an algorithm to evaluate $\varphi$ on any point $P\in E(\F_{q'})$ for any
  finite extension $\F_{q'}/\F_{q}$. In fact, a bound on the degree is sufficient since the degree can then be recovered using~\cite[Lem.~6.2]{EffRpz}. When $\deg(\varphi)$ is large, we may not be able to use directly accessible $N$-torsion for $N>\deg(\varphi)$; in that case, we can use a powersmooth integer for $N>\deg(\varphi)$ and a CRT representation as explained in~\cite[Appendix~A]{EffRpz}.
  We say that a representation is \textit{efficient} if this data enables us
  to compute the image of a point $P \in E(\F_{q'})$ in time polynomial in
  both $\log(\deg(\varphi))$ and $\log(q')$. We say that it is \textit{compact} if the
  space needed to store the data is polynomial in $\log(\deg(\varphi))$ and
  $\log(q')$.  
The representation we will use is the \textit{ideal representation} which relies
on the Deuring correspondence.

\subsubsection{Ideal representation.}

The core idea of the ideal representation is to represent an isogeny $\varphi:E
\to E'$ via the ideal $I_\varphi\subset\End(E)$ of all endomorphisms
whose kernel contains $\ker\varphi$, seen as an ideal in a maximal order of
$\B$ isomorphic to $\End(E)$. In order to use this representation, we first
need to fix an embedding $\End(E)\hookrightarrow \B$. Although this only encodes the isomorphism
class of $\varphi$, knowing the codomain $E'$ enables us to determine $\varphi$ up to
post-composition by automorphisms. Consequently, in order to have a full
representation of $\varphi$, we need a bit
more data to discriminate these automorphisms.
We disregard this subtlety in the present work: the order of $\Aut(E')$ is
at most $24$, so we can use exhaustive search on $\Aut(E')$ without harming the complexity. However, in a practical implementation, it might be
useful to add some information
to remove the ambiguity. Note that if $j(E') \ne 0,1728$
then $\Aut(E')=\{\pm 1\}$~\cite[Appendix A, Prop.~1.2.(c)]{Silv}.

\begin{theo} 
  Given an efficient representation of a $\Z$-basis of $\End(E)$ and its
  image via an embedding $\End(E)\hookrightarrow \B$, then a $\Z$-basis of the ideal $I_{\varphi}$ provides a compact representation of $\varphi$.
  Assuming GRH, this representation is efficient.
\end{theo}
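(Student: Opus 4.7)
\emph{Proof plan.} I would split the argument into compactness and efficiency, handling each separately.

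\emph{Compactness.} The ideal $I_\varphi$ is a full-rank sublattice of $\mathcal O \simeq \End(E)$ of index $\Nrd(I_\varphi)^2 = \deg(\varphi)^2$, by the formula recalled after the definition of ideal norm. Expressing a $\Z$-basis of $I_\varphi$ in Hermite normal form with respect to the supplied $\Z$-basis of $\mathcal O$ yields an upper-triangular $4\times 4$ integer matrix of determinant $\deg(\varphi)^2$, whose entries are bounded by $\deg(\varphi)^2$ and therefore have bit-size $O(\log \deg(\varphi))$. Combining this with the embedding $\mathcal O \hookrightarrow \B$ and the compact representation of the basis of $\End(E)$ gives a compact representation of $\varphi$ of size polynomial in $\log \deg(\varphi)$.

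\emph{Efficiency, under GRH.} Given $P \in E(\F_{q'})$, I would reduce the evaluation of $\varphi(P)$ to evaluations of isogenies of powersmooth degree. Wesolowski's variant of KLPT, whose polynomial complexity is proved under GRH as recalled in the introduction, produces an element $\alpha \in I_\varphi$ with $\Nrd(\alpha) = \deg(\varphi)\cdot N$ where $N$ is powersmooth, coprime to $\deg(\varphi)$, and of size polynomial in $\log \deg(\varphi)$ and $\log p$. Since $I_\varphi = \Hom(E',E)\,\varphi$, this provides a factorization $\alpha = \psi\circ\varphi$ with $\psi\colon E'\to E$ of degree $N$, and the kernel ideals of $\psi$ and of its dual can be read off from $\alpha$ and $I_\varphi$. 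Because $N$ is powersmooth, $\psi$ decomposes as a chain of prime-degree isogenies, each evaluable on points via Vélu-type formulas over polynomially sized extensions of $\F_{q'}$. Applying the identity $\widehat\psi\circ\alpha = [N]\cdot\varphi$ on $P$ yields $[N]\varphi(P) = \widehat\psi(\alpha(P))$: the right-hand side is computed using the efficient representation of $\alpha\in\End(E)$ inherited from the hypothesis, followed by the chain evaluation of $\widehat\psi$. Splitting $P$ into its $N$-part and its prime-to-$N$ part, one recovers $\varphi(P)$ by multiplying the prime-to-$N$ component by an inverse of $N$ modulo the relevant order and by computing directly on the $N$-power torsion where $\varphi$ can be read off from its kernel; the total cost is polynomial in $\log \deg(\varphi)$ and $\log q'$.

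\emph{Main obstacle.} The only non-routine ingredient is step producing the powersmooth representative $\alpha$: this is precisely Wesolowski's GRH-conditional analysis of KLPT, which is the source of the GRH assumption in the statement. The chain-evaluation step, the recombination on $N$-torsion and prime-to-$N$-torsion, and the Hermite-normal-form compactness estimate are standard once $\alpha$ is in hand.
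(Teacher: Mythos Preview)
The paper does not actually prove this theorem: it simply defers to \cite[Sec.~4.2 and C.1]{EffRpz}. Your sketch is therefore more detailed than what the paper offers, and the overall shape --- HNF bounds for compactness, KLPT/Wesolowski for a powersmooth auxiliary factor for efficiency --- is indeed the standard route taken in that reference.

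Your compactness argument is fine. In the efficiency argument, one step does not work as written. You arrange $\gcd(N,\deg\varphi)=1$, so $\ker\varphi$ has order prime to $N$; in particular $\varphi$ is \emph{injective} on $N$-power torsion and its kernel carries no information whatsoever about the action of $\varphi$ there. Thus the sentence ``by computing directly on the $N$-power torsion where $\varphi$ can be read off from its kernel'' is not a valid recovery step: from $[N]\varphi(P)$ alone you cannot single out $\varphi(P)$ among its $N^2$ preimages under $[N]$.

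The usual fixes are painless and keep the complexity polynomial. Either (i) run the KLPT step twice to obtain $\alpha_1,\alpha_2\in I_\varphi$ with $\Nrd(\alpha_i)=\deg(\varphi)\,N_i$ and $\gcd(N_1,N_2)=1$, so that a B\'ezout relation $uN_1+vN_2=1$ gives $\varphi(P)=u\,\widehat\psi_1(\alpha_1(P))+v\,\widehat\psi_2(\alpha_2(P))$; or (ii) since the evaluation point $P\in E(\mathbb F_{q'})$ is part of the input, choose the small prime $\ell$ used in Wesolowski's algorithm to be coprime to $\#E'(\mathbb F_{q'})$, so that $[N]$ is invertible on $E'(\mathbb F_{q'})$ and no splitting is needed. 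Either way the GRH assumption still enters only through the KLPT step, as you correctly identify.
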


For more details, see \cite[Sec.~4.2 and C.1]{EffRpz}.
We work with $2\times 2$ matrices whose entries are
isogenies, which can be encoded via efficient representations.

\begin{prop}
  Let $E_1, E_2, E_1', E_2', E_1'', E_2''$ be elliptic curves defined over
  $\F_q$. Let $M = (\varphi_{ij})_{i,j\in\{1, 2\}}$ (resp. $N =
  (\psi_{ij})_{i,j\in\{1, 2\}}$) be a $2\times 2$ matrix of isogenies, where
  $\varphi_{ij}:E_j\to E_i'$ (resp. $\psi_{ij}:E_j'\to E_i''$). Then $M$ (resp.
  $N$) represents the isogeny $E_1\times E_2\to E_1'\times E_2'$ (resp.
  $E_1'\times E_2'\to E_1''\times E_2''$) defined as
  $\phi_M(P,Q)=(\varphi_{11}(P) + \varphi_{12}(Q), \varphi_{21}(P) +
  \varphi_{22}(Q))$ (resp. $\phi_N(P,Q)=(\psi_{11}(P) + \psi_{12}(Q), \psi_{21}(P) +
  \psi_{22}(Q))$). Moreover, the matrix product $N\cdot M =
  (\sum_{k\in\{1, 2\}} N_{ik}\circ M_{kj})_{i,j\in\{1, 2\}}$ represents an
  isogeny $E_1\times E_2\to E_1''\times E_2''$ and efficient representations of
  the entries of $N\cdot M$ can be computed in polynomial-time from efficient
  representations of the entries of $M$ and $N$.
\end{prop}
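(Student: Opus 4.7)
The plan is to establish the three claims in sequence: (i) that $\phi_M$ and $\phi_N$ are isogenies of abelian varieties, (ii) that the matrix product formula follows from a direct computation, and (iii) that efficient representations propagate through the matrix product in polynomial time.

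First, I would verify that $\phi_M$ is a morphism of abelian varieties. Each component $E_1\times E_2\to E_i'$, $(P,Q)\mapsto \varphi_{i1}(P)+\varphi_{i2}(Q)$, is the composition of $(\varphi_{i1}\times\varphi_{i2}):E_1\times E_2\to E_i'\times E_i'$ with the group law $E_i'\times E_i'\to E_i'$. Since each $\varphi_{ij}$ is a group homomorphism of algebraic varieties, so is $\phi_M$. The same argument applies to $\phi_N$. That $\phi_M$ has finite kernel (and thus is an isogeny) is a hypothesis on the matrix of isogenies, under the implicit assumption that the matrix does define an isogeny; otherwise the proposition should be read as providing the morphism attached to $M$.

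Second, for the composition formula, I would compute directly. Using that each $\psi_{ij}$ is a group homomorphism, so commutes with the addition in $E_i''$,
\[
\phi_N\bigl(\phi_M(P,Q)\bigr) = \phi_N\bigl(\varphi_{11}(P)+\varphi_{12}(Q),\ \varphi_{21}(P)+\varphi_{22}(Q)\bigr)
\]
expands in the first component to
\[
\psi_{11}\bigl(\varphi_{11}(P)+\varphi_{12}(Q)\bigr) + \psi_{12}\bigl(\varphi_{21}(P)+\varphi_{22}(Q)\bigr)
= (\psi_{11}\circ\varphi_{11}+\psi_{12}\circ\varphi_{21})(P) + (\psi_{11}\circ\varphi_{12}+\psi_{12}\circ\varphi_{22})(Q),
\]
and analogously in the second component. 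This is exactly $\phi_{N\cdot M}(P,Q)$ with the convention $N\cdot M=(\sum_k \psi_{ik}\circ\varphi_{kj})_{i,j}$, so $\phi_N\circ\phi_M=\phi_{N\cdot M}$. In particular the product matrix again represents an isogeny $E_1\times E_2\to E_1''\times E_2''$.

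Third, for the efficient representation claim, the key observation is that an efficient representation only requires a data structure that supports polynomial-time point evaluation and provides (a bound on) the degree. Given efficient representations of all $\varphi_{kj}$ and $\psi_{ik}$, I would represent each entry $(N\cdot M)_{ij}=\sum_{k\in\{1,2\}} \psi_{ik}\circ\varphi_{kj}$ formally as a pair of composed isogenies together with their codomain; evaluating on $P\in E_j(\mathbb F_{q'})$ amounts to evaluating $\varphi_{kj}(P)$, then $\psi_{ik}$ at the result, and finally adding the two terms via the group law of $E_i''$, each of which is polynomial-time in $\log q'$ and in the logarithms of the relevant degrees. A degree bound can be taken as $\bigl(\sqrt{\deg\psi_{i1}\deg\varphi_{1j}}+\sqrt{\deg\psi_{i2}\deg\varphi_{2j}}\bigr)^{2}$ via the Cauchy--Schwarz bound for the norm form on $\Hom(E_j,E_i'')$, and the exact degree can then be recovered by the Weil-pairing procedure recalled before the statement. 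The main obstacle is precisely this bookkeeping for sums: nothing in the ideal representation directly gives the ideal of a sum of two isogenies, but because efficient representations are closed under composition (by the Deuring correspondence and ideal multiplication) and under addition (by storing a short formal sum and using the group law on the codomain), the construction goes through in polynomial time.
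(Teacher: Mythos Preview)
Your proposal is correct and follows essentially the same approach as the paper: the paper's proof simply observes that the only nontrivial point is closure of efficient representations under composition and addition, and defers to \cite[Sec.~6.1]{EffRpz} for those algorithms. You fill in the details the paper omits (the direct verification that $\phi_N\circ\phi_M=\phi_{N\cdot M}$, and an explicit degree bound via Cauchy--Schwarz on the norm form), but the structure is the same.
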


\begin{proof}
  The only thing that we need to prove is that we can compute efficient
  representations of compositions and sums of isogenies encoded with efficient
  representations. Algorithms for doing so are described in \cite[Sec.~6.1]{EffRpz}.
\end{proof}

% \subsubsection{HD representation.}
% The successive attacks on SIDH have led to new constructive applications. One of the most
% significant for us is that we can now evaluate an isogeny using calculations performed in
% higher dimensions, given only the image of certain torsion points. And to work with such points,
% we may need to deal with high degree extensions of $\F_q$. But to keep efficient computation we
% can not work in too big finite fields.{}
% \begin{theo} \cite[Thm.~5.19]{EffRpz}
%   Let $\varphi:E \to E'$ be an isogeny of degree $n$. Let $N=\prod \ell_i  > n$ be a smooth
%   integer coprime to $n$, with $\max(\ell_i)=\log^{O(1)}N$. For each $i$, let $(P_i,Q_i)$ be
%   a $\Z$-basis of $E[\ell_i]$, such that $\langle \bigoplus\limits_{i} P_i ,
%   \bigoplus\limits_{i} Q_i \rangle \, = \, E[N]$.
%   The data of $n, E, E'$ and the interpolation data $(P_i,\varphi(P_i),Q_i,\varphi(Q_i))_i$ gives a compact and
%   efficient representation of $\varphi$, called an HD representation.
% \end{theo}

% Note that this representation is \textit{universal}, meaning that any efficient
% representation can be efficiently converted into an HD representation. An
% interesting feature for cryptographic applications is that the interpolation
% data does not reveal any information about the way the isogeny was
% constructed.

\subsubsection{Knowing the endomorphism ring of a curve.}\label{sc:endoring_encoding} Throughout this
paper, we often say that the endomorphism ring of a supersingular elliptic
curve $E$ is ``known'' or ``given''. By this, we mean that efficient representations of a
$\Z$-basis $b_1,\ldots, b_4$ of $\End(E)$ is given, and that we also have
access to elements $\beta_1,\ldots, \beta_4\in\B$ such that the
$\Z$-module $\mathcal O$ generated by $\beta_1,\ldots, \beta_4$ in $\B$ is a maximal order
and the map $\End(E)\to\mathcal O$ sending $b_i$ to $\beta_i$ is a ring
isomorphism.

\subsection{Superspecial Abelian varieties}

The key theoretical result we rely on is the following statement of existence.

\begin{theo}\label{theo:Superspecial}(Deligne-Ogus-Shioda theorem) \cite[Thm.~3.5]{DelignTheo}
  Let $k$ be an algebraically closed field of characteristic $p > 0$. Let $E_1, \dots, E_g$ and 
  $E_1', \dots, E_g'$ be supersingular elliptic curves, where $g \ge 2$.
  Then there exists an isomorphism:
  $E_1 \times \dots \times E_g \cong    E_1' \times \dots \times E_g'.$
\end{theo}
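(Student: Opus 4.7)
I would reduce the general case to the base case $g=2$ by induction, and then use the Deuring correspondence to recast $g=2$ as a purely algebraic statement about projective modules over a maximal order in $\B$. For the reduction: once the $g=2$ case is granted, $E_1 \times E_2 \cong E_1' \times E_2'$, so $E_1 \times \cdots \times E_g \cong E_1' \times E_2' \times E_3 \times \cdots \times E_g$. Iterating on each subsequent pair $E_{2i-1}, E_{2i}$ converts each factor into the desired $E_j'$; when $g$ is odd, a single final application of the $g=2$ case to the last two (already-converted) slots handles the leftover factor.

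For $g=2$, I would fix a reference supersingular curve $E_0$ over $\bar k$ and set $\mO = \End(E_0)$, a maximal order in $\B$. The contravariant Hom functor $\Hom(-, E_0)$ yields an anti-equivalence between the category of abelian varieties isogenous to a power of $E_0$ (in particular, all products of supersingular elliptic curves) and the category of finitely generated projective left $\mO$-modules. Under this anti-equivalence, $E_1 \times E_2$ corresponds to $\Hom(E_1, E_0) \oplus \Hom(E_2, E_0)$, a rank-$2$ projective left $\mO$-module whose summands are invertible fractional left ideals. The theorem for $g=2$ is therefore equivalent to the algebraic claim that any two rank-$2$ projective left $\mO$-modules are isomorphic.

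The core algebraic input is a cancellation theorem for projective modules over maximal orders in definite quaternion algebras over $\Q$: for every $n \geq 2$, all rank-$n$ projective left $\mO$-modules are free, i.e., isomorphic to $\mO^n$. The argument splits into two steps: first, a noncommutative Steinitz-type decomposition expressing any rank-$n$ projective module as $\mO^{n-1} \oplus I$ for some invertible left ideal $I$; second, the identification $\mO \oplus I \cong \mO^2$ for every left ideal $I$ as soon as $n \geq 2$.

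The main obstacle lies in the second step. It ultimately rests on strong approximation for $\mathrm{SL}_n(\B)$ with $n \geq 2$. The failure of strong approximation for $\mathrm{SL}_1(\B)$ (because $\B$ is totally definite) is exactly what allows the class number $h(\mO)$ to exceed one, producing multiple isomorphism classes of supersingular elliptic curves; but these ideal classes collapse the moment one passes to projective modules of rank $\geq 2$, which is precisely the Deligne--Ogus--Shioda phenomenon at the algebraic level.
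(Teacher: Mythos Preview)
The paper does not give its own proof of this statement: Theorem~\ref{theo:Superspecial} is quoted as background and attributed to \cite[Thm.~3.5]{DelignTheo}, with no argument supplied. So there is nothing in the paper to compare your proposal against directly.

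That said, your sketch is essentially the classical proof and is sound. The reduction to $g=2$ works, though your pairwise scheme is slightly clumsier than necessary; the paper, when proving its \emph{algorithmic} analogue (Theorem~\ref{theo:main} from Theorem~\ref{theo:main2}), uses the cleaner induction
\[
E_1\times\cdots\times E_g \;\cong\; E_1'\times\cdots\times E_{g-1}'\times E_g \;\cong\; E_1'\times\cdots\times E_g',
\]
first applying the inductive hypothesis to the first $g-1$ factors, then a single $g=2$ step to the last two. Your $g=2$ argument via the anti-equivalence $\Hom(-,E_0)$ and freeness of rank-$\geq 2$ projective $\mO$-modules (Eichler/strong approximation for $\mathrm{SL}_2$ over $\B$) is exactly the standard route and is correct. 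The paper's contribution is not to reprove this existence statement but to make it effective: it develops an explicit criterion (Theorem~\ref{theo:isom_iff}) and algorithms to \emph{compute} the isomorphism given the endomorphism rings, which is orthogonal to the abstract module-theoretic proof you outline.
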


\begin{defi}
  An abelian variety $\mathcal{A}$ is called \textit{superspecial} when it is isomorphic to a product
  of supersingular elliptic curve.
\end{defi}

In other words, Theorem \ref{theo:Superspecial} states that there is only one superspecial
abelian variety of each dimension $g \ge 2$, up to isomorphism. We emphasize that we do not take
into account the \textit{polarizations} of the abelian varieties.

When $p\equiv 3\bmod 4$, there is a supersingular elliptic curve
defined over $\mathbb F_p$ by the equation $y^2 = x^3 + x$. We denote this
special curve by $E_0$ throughout this paper. A useful feature of
$E_0$ is that $\End(E_0)$ contains a subring isomorphic
to $\Z[i]$. In the case $p\equiv 1 \bmod 4$, a similar curve whose endomorphism ring contains a low-discriminant imaginary quadratic subring can be computed efficiently~\cite[Sec.~3.1]{Deuring4People}. A direct consequence of
Deligne-Ogus-Shioda theorem is that any superspecial variety of dimension $g$
defined over $\overline{\F_p}$ is $\overline{\F_p}$-isomorphic to $E_0^g$.

% \begin{remq}
%   Theorem~\ref{theo:Superspecial} does not hold true for $g=1$. For instance over the field $\F_{419^2}$, the supersingular curve defined by
%   $E : y^2 + xy = x^3 + 164x +377$ is not isomorphic to $E_0$.
%   However, $E_0^2 \cong E^2$.
% \end{remq}

If $E_1$, $E_2$, $E_1'$, $E_2'$ are supersingular elliptic curves defined over
$\F_{p^2}$, Theorem~\ref{theo:Superspecial} implies that $E_1\times E_2$ and
$E_1'\times E_2'$ are $\overline{\F_p}$-isomorphic.
In fact, when $E_1, E_2, E_1', E_2'$ are maximal, this isomorphism
is defined over $\F_{p^2}$, see~\cite[Lem.~5.2]{AbVarMax}. 

In this work, we explore the problem of computing explicit isomorphisms between
products of supersingular elliptic
curves. Moreover, as explained in Remark~\ref{remq:Fp2model}, we can
choose maximal models for the supersingular elliptic curves we work with.
The goal of this article is thus to find an
$\F_{p^2}$-isomorphism between the products $E_1 \times \cdots \times E_g$ and $E_1' \times \cdots \times E_g'$,
where the curves are maximal over $\F_{p^2}$.

\begin{remq}\label{remq:NonMaxCase}
  If the supersingular input curves are not maximal, we can start
  by computing $\overline{\F_p}$-isomorphic maximal curves. This can be done
  efficiently and it does not have any impact on our complexity bounds,
  see~\cite[Lem.~4]{Deuring4People}.
%  Let $E_1,E_2,E_1'$ and $E_2'$ be supersingular elliptic curves over $\F_{p^2}$.
%  As explained in Remark~\ref{remq:Fp2model}, there exists maximal  ${E_i}^{max},{E_i'}^{max}$ of these curves
%  along with $\F_q$-isomorphisms $\psi_i:E_i \to {E_i}^{max}$ and $\psi_i':{E_i'}^{max} \to {E_i'}$, where $\F_q$
%  is a finite extension of $\F_{p^2}$. Indeed there is only four isomorphisms to define, so we can choose
%  $\F_q$ to be finite in such a way that the maximal models are also defined over $\F_q$.
%  Now if $\Phi: {E_1}^{max} \times {E_2}^{max} \to {E_1'}^{max} \times {E_2'}^{max}$ is an $\F_{p^2}$ isomorphism,
%  then the composition $\diag(\psi_1',\psi_2') \circ \Phi \circ \diag(\psi_1,\psi_2): E_1 \times E_2 \to E_1' \times E_2'$
%  is an $\F_q$-isomorphism.
%  This generalization comes at the cost of slower arithmetic (working over $\F_q$ instead of $\F_{p^2}$),
%  and the computation of the $\psi,\psi'$.
\end{remq}

\section{Main Results}\label{sc:main}

\begin{algorithm}
  \caption{\ProductIsomorphism{}}\label{algo:ProductIsomorphism}
    \INPUT{ Maximal elliptic curves $E_1,\dots,E_g,E_1',\dots,E_g'$ over $\F_{p^2}$ for $g \geq 2$; $\Z$-bases of maximal orders $\mO[i],
     \mO[i]' \subset \B$ such that $\mO[i] \simeq \End(E_i)$ and $\mO[i]' \simeq \End(E_i')$ for $i \in \{1,\dots, g\}$.
  }
    \OUTPUT{An efficient representation of a $g\times
    g$ matrix of isogenies ($\varphi_{ij}$) representing an isomorphism
    $E_1\times \cdots \times  E_g \to
    E_1'\times \dots \times E_g'$}

  \eIf{$g=2$}{
    Return $\ProductSurfacesIsomorphism(E_1,E_1',E_2,E_2',\mO[1],\mO[1]',\mO[2],\mO[2]')$
    }{
   Compute an isomorphism $\Phi_{g-1}:E_1\times\dots\times E_{g-1}\to E_1'\times \dots \times E_{g-1}'$ by 
   a recursive call to $\ProductIsomorphism{}$ \;
   % Deduce an isomorphism $\Phi_g:E_1\times\dots\times E_g\to E_1'\times \dots \times E_{g-1}' \times E_g$ 
   % given by $(\Phi_{g-1}, Id_{E_g})$ \;
   Compute an isomorphism of surfaces $\varphi: E_{g-1}' \times E_g \to E_{g-1}' \times E_g'$ 
   by $\ProductSurfacesIsomorphism$ \;
   % Deduce an isomorphism $\Psi_g:E_1' \times \cdots \times  E_{g-1}' \times E_g \to E_1'\times \dots \times E_g' $
   % given by $( Id_{ E_1' \times \cdots \times E_{g-2}'}, \varphi)$ \;
   Return $(Id_{ E_1' \times \cdots \times E_{g-2}'}, \varphi ) \circ (\Phi_{g-1}, Id_{E_g})$.
  }
\end{algorithm}

Our main result is Algorithm~\ref{algo:ProductIsomorphism} whose
properties are as follows.

\begin{theo}\label{theo:main}
  Let $E_1, \dots, E_g$ and $E_1', \dots, E_g'$ be maximal elliptic curves over $\F_{p^2}$
  with known endomorphism rings, where $g \ge 2$. Algorithm~\ref{algo:ProductIsomorphism} is a probabilistic Las Vegas
  algorithm which computes  an $\F_{p^2}$-isomorphism of unpolarized abelian varieties:
  $E_1 \times \dots \times E_g \cong    E_1' \times \dots \times E_g'$.
   It runs in time linear in $g$, and polynomial in $\log(p)$ under GRH.
\end{theo}

In fact, the key point is Algorithm~\ref{algo:ProductSurfacesIsomorphism}
which deals with the dimension-2 case, for which we have the following
result.

\begin{theo}\label{theo:main2}
  Let $E_1,E_2$ and $E_1', E_2'$ be maximal elliptic curves over $\F_{p^2}$ with
  known endomorphism rings. Algorithm~\ref{algo:ProductSurfacesIsomorphism} is a probabilistic Las Vegas
  algorithm which computes  an $\F_{p^2}$-isomorphism of unpolarized abelian surfaces:
 $E_1 \times E_2 \cong    E_1' \times E_2'$.
  Assuming GRH, it runs in time polynomial in $\log(p)$.
\end{theo}

The rest of the paper will be dedicated to the proof of Theorem~\ref{theo:main2}.
Let us explain how to deduce Theorem~\ref{theo:main} from Theorem~\ref{theo:main2}.

\begin{proof}[Proof of Thm.~\ref{theo:main} from Thm.~\ref{theo:main2}]
  Here we assume that Algorithm~\ref{algo:ProductSurfacesIsomorphism} is correct.
  Under this assumption we provide a proof by induction on $g$ that Algorithm~\ref{algo:ProductIsomorphism}
  returns the desired isomorphism.
  By Deligne-Ogus-Shioda theorem, an isomorphism $E_1\times\dots\times E_g\to
  E_1'\times\dots\times E_g'$ can be factored as a composition
  $E_1\times \dots \times E_g\to E_1'\times \dots\times E_{g-1}'\times E_g \to
  E_1'\times \dots\times E_g'.$

  This boils down to computing one $(g-1)$-dimensional isomorphism $E_1\times\dots\times E_{g-1}\to
  E_1'\times \dots \times E_{g-1}'$ and one $2$-dimensional isomorphism
  $E_{g-1}'\times E_g\to E_{g-1}'\times E_g'$. By induction, this proves that
  computing a $g$-dimensional isomorphism can be reduced to computing $g-1$
  isomorphisms in dimension $2$. Finally, since Algorithm~\ref{algo:ProductSurfacesIsomorphism} runs
  in time polynomial in $\log(p)$ under GRH, the complexity statement follows.

\end{proof}

\section{Tools}\label{sec:tools}

In this section, we develop tools which will be useful for computing
isomorphisms in Section~\ref{sec:algo_isom}. In Section~\ref{sc:div_ideal}, we study algorithms
for dividing endomorphisms by isogenies. Section~\ref{sec:improv_Kani} proves a
slight improvement of Kani's formula for the degree of an isogeny between
products of elliptic curves; this is useful for proving that an isogeny is an
isomorphism. In Section~\ref{sec:transposing_isog}, we show how to
``transpose''
isogenies between products of elliptic curves: we provide an easy way to
construct an isogeny $E_1'\times E_2'\to E_1\times E_2$ from an isogeny
$E_1\times E_2\to E_1'\times E_2'$, while preserving the degree.
% In Section~\ref{sec:automorphisms}, we describe families of easily constructible
% automorphisms of products of elliptic curves; these automorphisms are useful
% for hiding secret data in cryptographic constructions, see
% Section~\ref{sc:crypto}.
Finally, in Section~\ref{section:adic}, we design
algorithms for finding the generator of the localization of a left-ideal in a
maximal order of $\B$.

\subsection{Division of principal ideals in quaternion orders}\label{sc:div_ideal}

The first tool that we need is a method for dividing efficiently an endomorphism by an
isogeny. More precisely, given an endomorphism $\phi\in\End(E_1)$, which
factors by an
isogeny $f:E_1\to E_2$, we wish to compute an isogeny $g:E_2\to E_1$ (which is
uniquely defined up to composition by automorphisms) such that
$\phi = g\circ f$. 
A general method when isogenies are given via efficient representations is
described in \cite[Cor.~6.8]{EffRpz}. A detailed complexity analysis is
provided in \cite[Sec.~4]{merdy2023supersingular} when $g$ is a scalar multiplication. We
propose here an explicit complete algebraic solution to the quaternionic
version of the problem, i.e., when all isogenies are represented as ideals in
$\B$.
This factorization problem is formalized in quaternion algebras as follows:

\begin{prob}[Principal ideal division]\label{pb:div_prin}
  Let $\mathcal O_1, \mathcal O_2, \mathcal \subset \B$ be two  maximal orders.
  Let
  $\mu\in\mathcal O_1$, $I\in \Conn(\mathcal O_1, \mathcal O_2)$, and $J$ be a
  left $\mathcal O_2$-ideal
  such that $\mathcal O_1\mu = I\cdot J$. Given $\mu$ and 
  $\Z$-bases of $\mathcal O_1, \mathcal O_2, I$, find a $\Z$-basis of $J$.
\end{prob}

% i dont find the lemma i quoted...
% \begin{remq}
%   If $\Nrd(I)$ and $\Nrd(J)$ are coprime, then \cite[Lem.~6]{2DWest} allows
%   us to recover $I$ more easily. However here we need to compute $J$,
%   and the assumption that $\Nrd(I)$ and
%   $\Nrd(J)$ are coprime is too strong for our setting: in theory (and in
%   experiments), this hypothesis is not always satisfied. Therefore,
%   we design a general algorithm which does not require any such
%   assumption on the input.
% \end{remq}

First we remark that Problem \ref{pb:div_prin} is unambiguous.
\begin{lemm}\label{lemm:unique_ideal}
  The left $\mathcal O_2$-ideal $J$ in Problem~\ref{pb:div_prin} is uniquely determined.
\end{lemm}
\begin{proof}
  Let $B_1$ and $B_2$ be two solutions of Problem~\ref{pb:div_prin}, and respectively
  denote by $J_1$ and $J_2$ the ideals they generate.
  Then we have $I\cdot J_1 = I\cdot J_2.$ By multiplying on the left by $\overline I$,
  we obtain that $\Nrd(I)\cdot\mathcal O_R(I)\cdot J_1 = \Nrd(I)\cdot\mathcal
  O_R(I)\cdot J_2$, see~\cite[Sec.~16.6]{QuatAlg}.
  Moreover $\mathcal{O}_R(I)=\mathcal{O}_L(J_1)= \mathcal{O}_L(J_2)=\mathcal{O}_2$,
  and $J_1$, $J_2$ are left-ideals in $\mathcal{O}_2$. Therefore, $\Nrd(I)\cdot
  J_1 = \Nrd(I)\cdot J_2$, which implies $J_1=J_2$.
\end{proof}

We first address the case where $\mu \in \Z$. In fact, we will show in Remark~\ref{remq:unicity_int_ideal_div} that the general case $\mu\in\mathcal O_1$ can be reduced to the case where $\mu\in\Z$.

\begin{prob}[Integer ideal division]\label{pb:int_ideal_div}
  Let $\mathcal O_1, \mathcal O_2, \mathcal \subset \B$ be two  maximal orders. Let
  $d \in \Z$, $I\in \Conn(\mathcal O_1, \mathcal O_2)$, and $J$ a left
  $\mOd$-ideal be
  such that $\mathcal O_1 d = I\cdot J$. Given 
  $d$ and $\Z$-bases of $\mathcal O_1, \mathcal O_2, I$, find a $\Z$-basis of $J$.
\end{prob}

\begin{remq}\label{remq:unicity_int_ideal_div}
  The same argument as in the proof of Lemma~\ref{lemm:unique_ideal} shows that
  Problem~\ref{pb:int_ideal_div} is well defined. Moreover we can swap the roles of
  $I$ and $J$ via conjugation since $I \cdot J= d \mOi = \overline J  \cdot
  \overline I = d \mOi$. It is easy to check that
  Problems~\ref{pb:int_ideal_div} and~\ref{pb:div_prin} are equivalent: the
  solution $J$ of Problem~\ref{pb:div_prin} with input
  $\mO[1], \mO[2], \mu, I$ equals the solution of Problem~\ref{pb:div_prin}
  with input $\mu^{-1}\mO[1]\mu$, $\mO[2]$, $\Nrd(\mu)$, $\overline\mu I$.
\end{remq}

Now we propose an efficient method to solve Problem \ref{pb:int_ideal_div}.

\begin{prop}\label{prop:J=S}
  With the same notation as in Problem~\ref{pb:int_ideal_div}, $J = \{
    s\in\mathcal O_1 \cap \mathcal O_2 :  I s  \subset \mathcal O_1d \}$.
\end{prop}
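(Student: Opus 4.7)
The plan is to prove the two inclusions separately, using the standard identities $\overline{I}\cdot I = \Nrd(I)\,\mathcal{O}_2$ and $I\cdot\overline{I} = \Nrd(I)\,\mathcal{O}_1$, together with the centrality of $d\in\Z$ in $\B$.

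For the inclusion ``$\supset$'', which is the easier direction, take $s \in \mathcal{O}_1 \cap \mathcal{O}_2$ with $Is \subset \mathcal{O}_1 d = IJ$. Left-multiplying by $\overline{I}$ gives
\[
  \Nrd(I)\,\mathcal{O}_2\, s \;=\; \overline{I}\,I\,s \;\subset\; \overline{I}\,IJ \;=\; \Nrd(I)\,\mathcal{O}_2 J \;=\; \Nrd(I)\,J,
\]
and dividing by $\Nrd(I)$ yields $\mathcal{O}_2 s \subset J$. Since $s \in \mathcal{O}_2$, this gives $s = 1\cdot s \in J$ (note that the hypothesis $s \in \mathcal{O}_1$ is not actually used in this direction).

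For the reverse inclusion ``$\subset$'', let $s \in J$. The condition $Is \subset \mathcal{O}_1 d$ is immediate from $IJ = \mathcal{O}_1 d$, and $s \in \mathcal{O}_2$ follows from $J \subset \mathcal{O}_L(J) = \mathcal{O}_2$. The subtle point, which I expect to be the main obstacle, is showing $J \subset \mathcal{O}_1$. For this, I would first left-multiply $IJ = \mathcal{O}_1 d$ by $\overline{I}$ and use $\overline{I}\,\mathcal{O}_1 = \overline{I}$ to derive $\Nrd(I)\,J = d\overline{I}$, i.e.\ $J = d\overline{I}/\Nrd(I)$. Next, from $J \subset \mathcal{O}_2$ one obtains $\mathcal{O}_1 d = IJ \subset I\mathcal{O}_2 = I$, and in particular $d \in I$. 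Since $d \in I$, the set $d\overline{I}$ is contained in $I\cdot\overline{I} = \Nrd(I)\,\mathcal{O}_1$, so dividing by $\Nrd(I)$ concludes $J \subset \mathcal{O}_1$. The difficulty here is that, although the left $\mathcal{O}_2$-ideal $J$ sits tautologically inside $\mathcal{O}_2$, it need not a priori lie in the (generically distinct) maximal order $\mathcal{O}_1$; what rescues the statement is the fact that $IJ$ equals the two-sided scalar ideal $\mathcal{O}_1 d$, and the centrality of $d$ is precisely what forces $d \in I$ and hence $d\overline{I} \subset I\overline{I}$.
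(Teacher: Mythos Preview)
Your proof is correct. The paper takes a somewhat different route: it sets $S:=\{s\in\mathcal O_1\cap\mathcal O_2 : Is\subset\mathcal O_1 d\}$, argues that $S$ is a left $\mathcal O_2$-ideal with $IS=\mathcal O_1 d$ (using $J\subset S$ for one inclusion and the definition of $S$ for the other), and then concludes $S=J$ via the uniqueness established in Lemma~\ref{lemm:unique_ideal}. Your approach is a direct double inclusion that inlines the conjugate-ideal identity $\overline I\,I=\Nrd(I)\,\mathcal O_2$ already used in that uniqueness lemma, so it is self-contained and does not appeal to Lemma~\ref{lemm:unique_ideal}. The point you single out as subtle --- that $J\subset\mathcal O_1$ --- is precisely what is needed (and left implicit in the paper) to justify the assertion $J\subset S$; your derivation via $J=d\,\overline I/\Nrd(I)$ together with $d\in I$ makes this step explicit. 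As a bonus, your observation that the hypothesis $s\in\mathcal O_1$ is not used in the ``$\supset$'' direction yields the slightly sharper identity $J=\{s\in\mathcal O_2 : Is\subset\mathcal O_1 d\}$; the intersection with $\mathcal O_1$ in the paper's formulation is there for the algorithmic description that follows rather than for the proof itself.
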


\begin{proof}
  Set $S := \{
    s\in\mathcal O_1 \cap \mathcal O_2 :  I s  \subset \mathcal O_1d \}$.
  We must show that $S$ is a left-ideal in $\mathcal O_2$
  and that $IS=\mOi d$.
  First we show that $S$ is a left-ideal of $\mathcal{O}_2$. Let $s \in S, x
  \in \mathcal{O}_2$. First, we notice that
  $I$ is a right-ideal in $\mathcal O_2$,
  thus $Ix\subset I$. Since $s \in S$, we get $Ixs\subset Is\subset \mOi d$,
  hence $xs\in S$. 
  Finally, we prove that $IS=\mOi d$. Notice that $I  S\subset \mathcal O_1 d$ by construction.
  In order to prove the other inclusion, we notice that $J$ is included in
  $S$; hence, $\mathcal O_1 d = I  J \subset I S$.
\end{proof}

Proposition~\ref{prop:J=S} reduces Problem~\ref{pb:int_ideal_div} to
$\Z$-linear algebra. Let $(e_0,\dots,e_3)$, $(u_0,\ldots, u_3)$, $(v_0,\ldots,
v_3)$ be $\Z$-bases of $\mO[1], I, \mO[1]\cap \mO[2]$ respectively.
We need to solve the following system over the integers of $4$
equations in $20$ unknowns
$\{x_i\}_{0\leq i\leq 3}$, $\{y_{ij}\}_{0\leq i,j\leq 3}$:
$$(E_j):u_j\sum_{0\leq i\leq 3} x_i v_i = d\sum_{0\leq i\leq 3} y_{ij} e_i.$$

Let $b^{(1)}, \ldots, b^{(16)}\in\Z^{20}$ be a $\Z$-basis of the solutions of
this system. Then, writing $b^{(i)} = (x^{(i)}_0,\ldots,
x^{(i)}_3,y^{(i)}_{00},\ldots, y^{(i)}_{33})$, we compute a basis $a^{(1)},
\ldots, a^{(3)}$ of the lattice generated by $\{(x^{(i)}_0, \ldots,
x^{(i)}_3)\}_{1\leq i\leq 16}$. Finally, writing $a^{(j)}=(a^{(j)}_0,\ldots,
a^{(j)}_3)$, the set $\{\sum_{0\leq i\leq 3} a^{(j)}_i v_i\}_{0\leq j\leq 3}$ is
a $\Z$-basis for $J$.

\begin{prop}\label{prop:complexityfactor}
  With the same notation as in Problem~\ref{pb:int_ideal_div}, let $\gamma$ be
  the maximum absolute value of the numerators and denominators of the coefficients of the
  elements in the bases of $\mO[1], \mO[2], I$, when written in the canonical
  basis $1, i, j, ij$ of $\B$. Then a $\Z$-basis of $J$ (written in the basis
  $1,i,j,ij$) can be computed in quasi-linear complexity $\widetilde
  O(\log\gamma)$.
\end{prop}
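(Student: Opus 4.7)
The plan is to bound the bit complexity of each step of the algorithm described in the paragraph preceding the proposition, exploiting the fact that every linear algebra problem encountered involves matrices of dimensions bounded by an absolute constant, so that the cost reduces to a bounded number of arithmetic operations on rationals of bit size $O(\log \gamma)$.

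First, I would control the size of the auxiliary data. Since $\mO[1], \mO[2]$ and $I$ are given by $\Z$-bases whose coordinates in $1, i, j, ij$ have numerators and denominators bounded by $\gamma$, one may compute a $\Z$-basis $(v_0, \ldots, v_3)$ of $\mO[1] \cap \mO[2]$ in $\widetilde O(\log \gamma)$ bit operations via a fixed-size Hermite Normal Form computation on the lattice generated by the bases of $\mO[1]$ and $\mO[2]$.

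Next, I would assemble the integer linear system. For each pair of indices, computing the product $u_j v_i$ in $\B$ takes a constant number of multiplications and additions on rationals of bit size $O(\log \gamma)$, and expressing the result in the $\Z$-basis $(e_0, \ldots, e_3)$ amounts to solving a $4 \times 4$ rational linear system of the same bit size. Rewriting the quaternionic equations $(E_j)$ as scalar equations in the canonical basis $1, i, j, ij$ and clearing denominators produces an integer matrix $M$ of fixed dimensions whose entries have bit size $O(\log \gamma)$. The total cost of this assembly is $\widetilde O(\log \gamma)$ bit operations using fast integer arithmetic.

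Then I would solve the linear system over $\Z$. Because $M$ has constant size, a $\Z$-basis of its kernel can be produced by a Hermite Normal Form algorithm on a fixed-size matrix whose entries are bounded in bit size by $O(\log \gamma)$; Hadamard-type bounds keep the intermediate integers of the same order, so modular or subresultant variants of HNF terminate in $\widetilde O(\log \gamma)$ bit operations. The subsequent projection onto the first four coordinates, the extraction of a basis $a^{(1)}, \ldots, a^{(4)}$ of the resulting sublattice of $\Z^4$, and the reconstruction of the elements $\sum_i a^{(j)}_i v_i$ in the canonical basis of $\B$ are again a constant number of arithmetic operations of the same bit size. Summing the contributions yields the claimed $\widetilde O(\log \gamma)$ complexity.

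The main subtlety of the analysis is the control of intermediate coefficient growth in the Hermite Normal Form step. Here it follows entirely from the fact that all matrices involved have dimensions independent of $\gamma$, so that combined with Schönhage-Strassen integer multiplication every bit operation on rationals of bit size $O(\log \gamma)$ is performed in quasi-linear time, and the total number of such operations is absolutely bounded.
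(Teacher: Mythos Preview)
Your proposal is correct and follows the same approach as the paper: both reduce the computation to a bounded number of Hermite Normal Form computations on constant-size integer matrices with entries of bit size $O(\log\gamma)$, which yields the quasi-linear bound. The paper's own proof is considerably terser---it simply observes that the algorithm is a sequence of HNFs of bounded-dimension matrices and cites Storjohann for the quasi-linear HNF complexity---whereas you spell out each step explicitly.
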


\begin{proof}
  A $\Z$-basis for $J$ is obtained via linear algebra over $\Z$ from
  the input $\Z$-bases. It can be computed via a sequence of Hermite Normal
  Forms
  of matrices with dimensions bounded above by a constant. Our proposition follows from the fact
  that the Hermite Normal Form of a nonzero matrix
  $(A_{ij})$ can
  be computed with complexity quasi-linear in $\max_{ij}(\log\lvert
  A_{ij}\rvert)$, see~\cite[Chap.~6]{storjohann2000algorithms}.
\end{proof}

\begin{remq}\label{remq:pb_red_facto}
  The reduction in Remark~\ref{remq:unicity_int_ideal_div} shows that Problem~\ref{pb:div_prin} can
  also be solved in quasi-linear complexity.
\end{remq}

\subsection{A corollary of Kani's formula for the degree of isogenies
  between products of elliptic curves}\label{sec:improv_Kani}

The following statement is a slight improvement of a formula due to Kani,
for the degree of an isogeny between products of elliptic curves.
Our formula from Proposition~\ref{prop:KaniDeg} reveals the sign of a quantity
appearing in Kani's work ~\cite[Cor.~64]{KaniJacobian}.
In the following statement, we use the convention that the zero morphism,
that is not an isogeny, has degree $0$.

\begin{prop}\label{prop:KaniDeg}
  Let $E_1,E_2,E_1',E_2'$ be elliptic curves defined over $\overline{\F_p}$.
 For $i,j\in\{1,2\}$, let $\varphi_{ij}\in\Hom(E_j, E_i')$ be a morphism of
  degree $d_{ij}\in\N$. Let $\phi\in\Hom(E_1\times E_2, E_1'\times E_2')$ be
  the morphism defined as $\phi(x_1, x_2) =
  (\varphi_{11}(x_1)+\varphi_{12}(x_2), \varphi_{21}(x_1)+\varphi_{22}(x_2))$.
  Then
  $\deg(\phi)
    =
    (d_{11}+d_{21})(d_{12}+d_{22}) -
    \deg(\widehat\varphi_{12}\varphi_{11} +
    \widehat\varphi_{22}\varphi_{21}).$
\end{prop}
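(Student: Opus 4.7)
The plan is to deduce the statement from Kani's original identity~\cite[Cor.~63]{KaniJacobian}, which gives exactly the same expression but wrapped in a pair of absolute value bars, and then to verify that the quantity inside is in fact always non-negative so that the absolute value can be dropped. Concretely, Kani's formula reads
$$\deg(\phi) = \bigl\lvert (d_{11}+d_{21})(d_{12}+d_{22}) - \deg(\widehat\varphi_{12}\varphi_{11}+\widehat\varphi_{22}\varphi_{21}) \bigr\rvert,$$
so the proposition reduces to proving the inequality
$$\deg(\widehat\varphi_{12}\varphi_{11}+\widehat\varphi_{22}\varphi_{21}) \leq (d_{11}+d_{21})(d_{12}+d_{22}).$$

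To establish this inequality, I would set $\alpha := \widehat\varphi_{12}\varphi_{11}$ and $\beta := \widehat\varphi_{22}\varphi_{21}$, both lying in $\Hom(E_1, E_2)$, with degrees $d_{11}d_{12}$ and $d_{21}d_{22}$ respectively. The degree is a positive semidefinite quadratic form on $\Hom(E_1, E_2)$ (positive definite whenever this group is nonzero), so applying Cauchy-Schwarz to its associated bilinear form $B(f,g) := \deg(f+g)-\deg(f)-\deg(g)$ yields
$$|B(\alpha,\beta)| \le 2\sqrt{\deg(\alpha)\deg(\beta)} = 2\sqrt{d_{11}d_{12}d_{21}d_{22}}.$$

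Combining this bound with the expansion $\deg(\alpha+\beta) = \deg(\alpha)+\deg(\beta)+B(\alpha,\beta)$ and the AM-GM inequality $2\sqrt{(d_{11}d_{22})(d_{12}d_{21})} \leq d_{11}d_{22}+d_{12}d_{21}$, one obtains
$$\deg(\alpha+\beta) \leq d_{11}d_{12} + d_{21}d_{22} + d_{11}d_{22} + d_{12}d_{21} = (d_{11}+d_{21})(d_{12}+d_{22}),$$
which is exactly the inequality needed, and the absolute value becomes vacuous.

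The main obstacle is nothing deep: the only delicate step is matching Kani's indexing conventions with those of the present statement so that the correct sum $\widehat\varphi_{12}\varphi_{11} + \widehat\varphi_{22}\varphi_{21}$ appears inside the absolute value bars. Once this bookkeeping is verified, the Cauchy-Schwarz/AM-GM chain is routine and works uniformly across all cases, including the degenerate situations where some $\varphi_{ij}$ vanishes or $\Hom(E_1, E_2) = 0$.
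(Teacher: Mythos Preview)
Your proof is correct and follows essentially the same strategy as the paper: both start from Kani's formula with the absolute value and reduce to proving the inequality $\deg(\mu+\nu)\le (d_{11}+d_{21})(d_{12}+d_{22})$, where $\mu=\widehat\varphi_{12}\varphi_{11}$ and $\nu=\widehat\varphi_{22}\varphi_{21}$. The only cosmetic difference is that the paper bounds the cross term in one shot by expanding $0\le\deg(d_{21}\mu-d_{11}\nu)$ with those specific integer coefficients (and then divides by $d_{11}d_{21}$), whereas you reach the same bound via the two named inequalities Cauchy--Schwarz and AM--GM; your route has the minor advantage of not needing a separate word for the degenerate case $d_{11}d_{21}=0$.
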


\begin{proof}
  Set $\mu:= \widehat\varphi_{12}\varphi_{11}$ and $\nu :=
\widehat\varphi_{22}\varphi_{21}$. \cite[Cor.~64]{KaniJacobian} states that
$\deg(\phi) = \lvert(d_{11}+d_{21})(d_{12}+d_{22}) -
    \deg(\mu + \nu)\rvert$.
    Therefore, the only thing that we need to prove is that  $\deg(\mu + \nu)\leq
    (d_{11}+d_{21})(d_{12}+d_{22})$, so that the absolute value is not
    required.
We start with the following computation:
\[\begin{array}{rcl}
    0&\leq& \deg(d_{21}\mu-d_{11}\nu)\\
  &=&(d_{21}\mu - d_{11}\nu)(d_{21}\widehat\mu - d_{11}\widehat\nu)\\
  &=&d_{21}^2 \deg(\mu) + d_{11}^2 \deg(\nu)-d_{11}d_{21}(\nu\widehat\mu +
  \mu\widehat\nu).
\end{array}\]

Next, we notice that $\deg(\mu + \nu) -\deg(\mu)-\deg(\nu) = (\mu+\nu)(\widehat\mu+\widehat\nu)-\deg(\mu)-\deg(\nu)  =
\nu\widehat\mu +
\mu\widehat\nu$. Replacing $\nu\widehat\mu +
\mu\widehat\nu$ in the previous inequality, we obtain 
\[d_{21}^2 \deg(\mu) + d_{11}^2 \deg(\nu)\geq d_{11}d_{21}(\deg(\mu +
\nu)-\deg(\mu)-\deg(\nu)).\]
We replace $\deg(\mu)$ and $\deg(\nu)$ by their respective values
$d_{12}d_{11}$ and $d_{22}d_{21}$ to obtain 
\[d_{21}^2 d_{12}d_{11} + d_{11}^2 d_{22}d_{21}\geq d_{11}d_{21}(\deg(\mu +
\nu)-d_{12}d_{11}-d_{22}d_{21}).\]
Dividing this inequality by $d_{11}d_{21}$ gives $\deg(\mu + \nu)\leq
(d_{11}+d_{21})(d_{12}+d_{22})$.
\end{proof}

We shall use Proposition~\ref{prop:KaniDeg} in order to compute degrees of
isogenies between products of elliptic curves. In this usecase, we know efficient representations for $\varphi_{ij}$, and we want to compute the degree of $\phi$.
This requires computing $\deg(\widehat\varphi_{12}\varphi_{11} +
    \widehat\varphi_{22}\varphi_{21})$, which can be obtained from the techniques described in \cite[Lem.~6.2 and Appendix~A]{EffRpz} and from the bound  $\deg(\widehat\varphi_{12}\varphi_{11} +
    \widehat\varphi_{22}\varphi_{21})\leq (d_{11}+d_{21})(d_{12}+d_{22})$.

 An important case is
that it can be used to check if such an isogeny is
an isomorphism.
More precisely, a $2$-dimensional isogeny 
can be given as a matrix of isogenies $(\varphi_{ij})=\begin{pmatrix}
   \varphi_{11} & \varphi_{12} \\
   \varphi_{21} & \varphi_{22} \\           
 \end{pmatrix}$. It is an isomorphism if and only if
$(d_{11}+d_{21})(d_{12}+d_{22}) -
    \deg(\widehat\varphi_{12}\varphi_{11} +
    \widehat\varphi_{22}\varphi_{21}) =1.$
By using this property, we obtain:
 \begin{prop} \label{prop:isomorphism_suff_cond}
   Let $E_1, E_2, E_1', E_2'$ be four elliptic curves defined over
   $\overline{\F_p}$, and
   $\varphi_{ij}:E_j\to E_i'$, $i,j\in\{1, 2\}$ be four isogenies. 
   Set $\mu=\widehat\varphi_{12}\varphi_{11}, \nu =
   \widehat\varphi_{22}\varphi_{21}$, and write $d_{ij} = \deg(\varphi_{ij})$.
   Then $\deg( d_{21} \mu - d_{11} \nu) =  d_{11} d_{21}$ if and only if $\phi=(\varphi_{ij})_{i,j\in\{1,2\}}\in\Hom(E_1 \times E_2, E_1' \times E_2')$
  is an isomorphism.
\end{prop}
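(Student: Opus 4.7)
The plan is to show that $\deg(d_{21}\mu - d_{11}\nu) = d_{11}d_{21}\cdot\deg(\phi)$, which makes the proposition an immediate corollary of Proposition~\ref{prop:KaniDeg}: since the $\varphi_{ij}$ are isogenies, $d_{11}d_{21} > 0$, so the stated equality is equivalent to $\deg(\phi)=1$, which is in turn equivalent to $\phi$ being an isomorphism (a degree-$1$ morphism between abelian varieties of the same dimension is automatically an isomorphism).

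To prove $\deg(d_{21}\mu - d_{11}\nu) = d_{11}d_{21}\cdot\deg(\phi)$, the computation is essentially already performed in the proof of Proposition~\ref{prop:KaniDeg}. First, expand using $\deg(\alpha)=\alpha\widehat\alpha$:
\[
\deg(d_{21}\mu - d_{11}\nu) = d_{21}^{\,2}\deg(\mu) + d_{11}^{\,2}\deg(\nu) - d_{11}d_{21}(\mu\widehat\nu + \nu\widehat\mu).
\]
Next, substitute the identity $\mu\widehat\nu + \nu\widehat\mu = \deg(\mu+\nu) - \deg(\mu) - \deg(\nu)$, which follows from expanding $\deg(\mu+\nu)=(\mu+\nu)(\widehat\mu+\widehat\nu)$. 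Finally, plug in $\deg(\mu) = d_{11}d_{12}$ and $\deg(\nu) = d_{21}d_{22}$, collect the terms, and factor out $d_{11}d_{21}$. This yields
\[
\deg(d_{21}\mu - d_{11}\nu) = d_{11}d_{21}\bigl[(d_{11}+d_{21})(d_{12}+d_{22}) - \deg(\mu+\nu)\bigr] = d_{11}d_{21}\cdot\deg(\phi),
\]
where the last equality is Proposition~\ref{prop:KaniDeg}.

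The argument is purely algebraic manipulation and presents no real obstacle: the substantive content (both Kani's formula and the removal of the absolute value) is already established in Proposition~\ref{prop:KaniDeg}. The only care required is to note that the $\varphi_{ij}$ are assumed to be isogenies, so $d_{11}, d_{21} \geq 1$ and we may divide by $d_{11}d_{21}$ without issue when translating between the two statements.
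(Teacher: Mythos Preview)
Your proof is correct and follows essentially the same approach as the paper: both establish the identity $\deg(d_{21}\mu - d_{11}\nu) = d_{11}d_{21}\cdot\deg(\phi)$ via the expansion $\deg(\mu+\nu)-\deg(\mu)-\deg(\nu)=\mu\widehat\nu+\nu\widehat\mu$ together with Proposition~\ref{prop:KaniDeg}, and then conclude since $d_{11}d_{21}>0$. The only difference is cosmetic ordering of the algebra.
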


\begin{proof}
By Proposition~\ref{prop:KaniDeg}, we have
  $\deg(\mu+\nu) = (d_{11}+d_{21})(d_{12}+d_{22}) -
  \deg(\phi)$.
  Therefore, we obtain the equality
  \begin{equation}\deg(\mu+\nu) - \deg(\mu) - \deg(\nu) = \frac{d_{11}}{d_{21}} \deg(\nu) +
    \frac{d_{21}}{d_{11}} \deg(\mu) - \deg(\phi).\label{eq:normeq2endo}\end{equation}
  By multiplying~\eqref{eq:normeq2endo} by $d_{11}d_{21}$, we obtain
  \begin{align*}
    d_{11} d_{21} \deg(\phi) &= d_{11}(d_{11}+d_{21})\deg(\nu)+
    d_{21}(d_{11}+d_{21})\deg(\mu)-d_{11}d_{21}\deg(\mu+\nu)\\
    &= d_{11}^2\deg(\nu)+d_{21}^2\deg(\mu)-d_{11}d_{21}\Trd(\mu\widehat\nu)\\
    &= \deg( d_{21} \mu - d_{11} \nu),
  \end{align*}
  hence $\deg(\phi)=1$ if and only if $\deg( d_{21} \mu - d_{11} \nu) =
  d_{11}d_{21}$.
\end{proof}

\subsection{Transposing isogenies}\label{sec:transposing_isog}

In this section, we show how an isogeny $\phi: E_1\times E_2\to E_1'\times E_2'$ can
be transformed into a transposed isogeny $\widetilde\phi: E_1'\times E_2'\to
E_1\times E_2$ of the same degree.
Since we have not fixed
any polarization on the product surface, this transposed isogeny is
not a dual of $\phi$ in the usual sense. In particular, the composed
endomorphism
$\widetilde\phi\cdot\phi$ need not be the multiplication by an integer. Still,
the degree is preserved, i.e., $\deg(\phi)=\deg(\widetilde\phi)$.

\begin{coro}\label{coro:pseudodual}
  With the same notation as in Proposition~\ref{prop:KaniDeg}, let $\widetilde
  \phi\in\Hom(E_1'\times E_2', E_1\times E_2)$ denote the morphism defined as
  $\widetilde\phi(x_1', x_2') = (\widehat\varphi_{11}(x_1') +
    \widehat\varphi_{21}(x_2'),
  \widehat\varphi_{12}(x_1')+\widehat\varphi_{22}(x_2'))$, i.e., in
  matrix notation
  $\widetilde\phi = \begin{bmatrix}
      \widehat\varphi_{11}& \widehat\varphi_{21}\\ 
      \widehat\varphi_{12}& \widehat\varphi_{22}
  \end{bmatrix}$.
    Then $\deg(\phi) = \deg(\widetilde\phi)$.
\end{coro}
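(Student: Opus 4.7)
The plan is to apply Proposition~\ref{prop:KaniDeg} twice --- once to $\phi$ itself and once to the composed endomorphism $\widetilde\phi\circ\phi$ of $E_1\times E_2$ --- and to compare the two resulting expressions using multiplicativity of the degree.

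First, I would compute the matrix of $\widetilde\phi\circ\phi$ directly from the definitions and observe that it takes the particularly convenient form
\[\widetilde\phi\circ\phi = \begin{pmatrix} d_{11}+d_{21} & \widehat{\mu+\nu} \\ \mu+\nu & d_{12}+d_{22}\end{pmatrix},\]
where $\mu=\widehat\varphi_{12}\varphi_{11}$ and $\nu=\widehat\varphi_{22}\varphi_{21}$ are exactly the morphisms appearing in Proposition~\ref{prop:KaniDeg}: the diagonal entries collapse to integer scalars (sums of degrees), while the off-diagonal entries are dual to each other. I would then apply Proposition~\ref{prop:KaniDeg} to this matrix, with the scalar diagonal entries viewed as multiplication-by-integer endomorphisms of degrees $(d_{11}+d_{21})^2$ and $(d_{12}+d_{22})^2$. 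Setting $A=d_{11}+d_{21}$, $B=d_{12}+d_{22}$ and $D=\deg(\mu+\nu)$, the cross-term in Kani's formula becomes $\deg((A+B)(\mu+\nu))=(A+B)^2 D$, and a short expansion collapses the whole expression into $(AB-D)^2$.

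Now, a single application of Proposition~\ref{prop:KaniDeg} to $\phi$ itself gives $\deg(\phi)=AB-D$, so what I have obtained is $\deg(\widetilde\phi\circ\phi)=\deg(\phi)^2$. Combined with multiplicativity of the degree, this yields $\deg(\widetilde\phi)\cdot\deg(\phi)=\deg(\phi)^2$, hence $\deg(\widetilde\phi)=\deg(\phi)$ whenever $\phi$ is an isogeny. The remaining edge case, where $\phi$ is not an isogeny, is handled symmetrically: since $\widetilde{\widetilde\phi}=\phi$, the very same argument applied with the roles of $\phi$ and $\widetilde\phi$ swapped forces $\deg(\widetilde\phi)=0$ as well.

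The step I expect to be slightly delicate is the algebraic simplification that makes Kani's formula for $\widetilde\phi\circ\phi$ factor as a perfect square; this is what ultimately allows the conclusion without having to prove any auxiliary ``cyclic invariance of the reduced trace across distinct curves'' identity, which would be a natural but heavier alternative route.
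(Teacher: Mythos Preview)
Your proof is correct and follows essentially the same approach as the paper: the paper composes in the other order, computing $\phi\circ\widetilde\phi$ and recognizing the Kani formula for that matrix as $\deg(\widetilde\phi)^2$, which is the mirror image of your computation. Your version is in fact slightly more careful, since you explicitly treat the degenerate case $\deg(\phi)=0$ via the involutivity $\widetilde{\widetilde\phi}=\phi$, whereas the paper divides by $\deg(\widetilde\phi)$ without comment.
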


\begin{proof}
  Set $d_{ij}=\deg(\varphi_{ij})$ and $\psi:=\varphi_{21}\widehat\varphi_{11}  + \varphi_{22}\widehat\varphi_{12}$, then
$$\phi \widetilde\phi
      = \begin{pmatrix}
        (d_{11} + d_{12}) & \widehat{\psi} \\
        \psi           & (d_{21} + d_{22})\\
      \end{pmatrix}.
$$

    Applying Proposition~\ref{prop:KaniDeg} to the composed endomorphism $\phi
    \widetilde\phi$, we get 
    $$
    \deg(\phi)\deg(\widetilde\phi) = \deg ( \phi \widetilde\phi ) %
      = \left(  (d_{11} + d_{12})(d_{21} + d_{22}) - \deg( \psi) \right)^2
     = \deg( \widetilde\phi ) ^2 .
    $$
Therefore, $\deg( \phi ) = \deg( \widetilde\phi ).$
\end{proof}

\subsection{Localization}\label{section:adic}

In this section, we investigate algorithmic aspects of the ring $\Mat_2(\Z_\ell)$
and of its left-ideals. This is useful for studying localizations
of quaternion algebras: when $\mathcal O$ is a maximal order in a quaternion
algebra over $\Q$ not ramified at $\ell$, then $\mathcal O\otimes \Z_\ell$ is
isomorphic to $\Mat_2(\Z_\ell)$. The first thing to notice is that $\Mat_2(\Z_\ell)$ is
left-principal, and its left-ideals correspond to matrices in Hermite Normal
Form.

\begin{prop}\cite[Chap.~II, Thm.~2.3]{QuatVigneras}
  The left-ideals in $\Mat_2(\Z_\ell)$ are the (all distinct) ideals of the form 
  $$\Mat_2(\Z_\ell)\cdot \begin{pmatrix}\ell^n & r\\0&\ell^m\end{pmatrix},$$
    where $n,m\in\N$ are nonnegative integers, and $r\in\{0,\ldots, \ell^{m-1}\}$.
\end{prop}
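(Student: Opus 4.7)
The strategy is to reduce the classification to a standard Hermite Normal Form statement over the discrete valuation ring $\Z_\ell$, after showing that every left ideal of $\Mat_2(\Z_\ell)$ is principal and corresponds to a $\Z_\ell$-submodule of $\Z_\ell^2$.

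First I would establish the following Morita-type description. For a left ideal $I\subseteq \Mat_2(\Z_\ell)$, let $M(I)\subseteq \Z_\ell^2$ denote the $\Z_\ell$-submodule spanned by all rows of all matrices in $I$. Left-multiplying $A\in I$ by a matrix unit $E_{ij}$ produces the matrix whose $i$-th row is the $j$-th row of $A$ and whose other rows vanish. Hence any row of any element of $I$ can be placed in any row position (with zeros elsewhere) while remaining in $I$, and conversely any matrix whose rows both lie in $M(I)$ can be reconstructed as a sum $\sum_{i} E_{ii}A_i$ of such elements. Thus $I$ equals the set of matrices whose rows lie in $M(I)$, and $I\mapsto M(I)$ is an order-preserving bijection between left ideals of $\Mat_2(\Z_\ell)$ and $\Z_\ell$-submodules of $\Z_\ell^2$. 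In particular, the full-rank (rank-$4$) left ideals correspond exactly to rank-$2$ sublattices of $\Z_\ell^2$.

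Second, I would use that $\Z_\ell$ is a DVR with uniformizer $\ell$ to put any rank-$2$ sublattice into a unique normal form. Given any basis of such a lattice, written as the rows of an invertible $2\times 2$ matrix $B$, row operations over $\Z_\ell$ (i.e.\ left multiplication by $\GL_2(\Z_\ell)$) bring $B$ into the upper triangular shape $\begin{pmatrix}\ell^n & r\\0&\ell^m\end{pmatrix}$: one uses the Euclidean algorithm on the valuations to clear the $(2,1)$-entry, normalises the two pivots to powers of $\ell$ by scaling with units of $\Z_\ell^\times$, and finally reduces $r$ modulo $\ell^m$ by subtracting a $\Z_\ell$-multiple of the second row from the first. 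This produces a representative of the form stated, with $n,m\in\N$ and $0\leq r<\ell^m$.

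Third, writing $A$ for this matrix, the Morita description gives $I=\Mat_2(\Z_\ell)\cdot A$: both rows of $A$ lie in $M(I)$, so $\Mat_2(\Z_\ell)\cdot A\subseteq I$, and conversely any $B\in I$ has rows expressible as $\Z_\ell$-combinations of the rows of $A$, i.e.\ $B=U\cdot A$ for some $U\in\Mat_2(\Z_\ell)$. For uniqueness, if $\Mat_2(\Z_\ell)A_1=\Mat_2(\Z_\ell)A_2$ with $A_1,A_2$ in the stated normal form, then $A_1=UA_2$ and $A_2=VA_1$ with $U,V\in\Mat_2(\Z_\ell)$; since $\det(A_i)=\ell^{n_i+m_i}\neq 0$ in $\Q_\ell$ it follows that $UV=\Id$, hence $U\in\GL_2(\Z_\ell)$. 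Uniqueness of Hermite Normal Form over the DVR $\Z_\ell$ under left multiplication by $\GL_2(\Z_\ell)$ then forces $A_1=A_2$. I do not anticipate a real obstacle: the only care needed is in setting up the row/column correspondence cleanly and in invoking HNF uniqueness for a DVR rather than for a general PID.
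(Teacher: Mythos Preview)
The paper does not supply its own proof of this proposition: it is stated with a bare citation to Vign\'eras \cite[Chap.~II, Thm.~2.3]{QuatVigneras} and used as a black box. So there is no ``paper's proof'' to compare against.

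Your argument is correct and is essentially the standard one. The Morita-type identification of left ideals of $\Mat_2(\Z_\ell)$ with $\Z_\ell$-submodules of $\Z_\ell^2$ via row spaces is set up cleanly using matrix units, and the reduction to Hermite Normal Form over the DVR $\Z_\ell$ gives both existence and uniqueness of the stated generator. One small point worth making explicit: the proposition as stated in the paper only describes left ideals whose generator is nonsingular over $\Q_\ell$, i.e.\ the ideals of full $\Z_\ell$-rank $4$; you implicitly acknowledge this when you pass to ``rank-$2$ sublattices of $\Z_\ell^2$'', but it would be good to say outright that this is the intended scope (it is the only case relevant to the paper, since localisations of integral ideals in a maximal quaternion order are always full rank). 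With that caveat noted, the proof is complete.
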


As $\Mat_2(\Z_\ell)$ is left-principal, we can define the \emph{right-gcd} of
matrices $A_1, A_2\in \Mat_2(\Z_\ell)$ as the Hermite Normal Form of a
generator of the ideal $\Mat_2(\Z_\ell)\cdot A_1+\Mat_2(\Z_\ell)\cdot A_2$. We
now consider the problem of computing this right-gcd.

\begin{prop}\label{prop:rgcd_mat_zl}
  Let $A_1, A_2\in\Mat_2(\Z_\ell)$ be two matrices in Hermite Normal Form:
  $$A_i = \begin{pmatrix}\ell^{n_i} & r_i\\
  0&\ell^{m_i}\end{pmatrix}, \quad i\in\{1, 2\}.$$
  We assume without loss of generality that $n_2\geq n_1$. Set $m=\min(m_1,
  m_2, \val_\ell(r_2-\ell^{n_2-n_1}r_1))$ (with the convention that
  $\val_\ell(0) = \infty$). Then the
  \emph{right-gcd} of $A_1$ and $A_2$ is
  $$\rgcd(A_1, A_2) = \begin{pmatrix}\ell^{n_1}&(r_1\bmod
  \ell^m)\\0&\ell^m\end{pmatrix}.$$
\end{prop}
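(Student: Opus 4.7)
The plan is to verify directly that the proposed matrix $G := \begin{pmatrix}\ell^{n_1} & (r_1 \bmod \ell^m) \\ 0 & \ell^m\end{pmatrix}$ generates the same left-ideal as $A_1$ and $A_2$; since $G$ is in Hermite Normal Form (because $m \leq m_1, m_2$ implies $m$ is a nonnegative integer, and $r_1 \bmod \ell^m \in \{0, \dots, \ell^m - 1\}$), the classification recalled just before the statement will then identify $G$ as the sought right-gcd.

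For the inclusion $\Mat_2(\Z_\ell)\cdot A_1 + \Mat_2(\Z_\ell)\cdot A_2 \subseteq \Mat_2(\Z_\ell)\cdot G$, I will exhibit explicit matrices $M_1, M_2 \in \Mat_2(\Z_\ell)$ with $M_i G = A_i$. A direct computation shows that $M_1 := \begin{pmatrix} 1 & (r_1 - (r_1 \bmod \ell^m))/\ell^m \\ 0 & \ell^{m_1 - m}\end{pmatrix}$ and $M_2 := \begin{pmatrix}\ell^{n_2 - n_1} & b' \\ 0 & \ell^{m_2 - m}\end{pmatrix}$ work, where $b' = (r_2 - \ell^{n_2 - n_1}(r_1 \bmod \ell^m))/\ell^m$. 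The integrality of $M_1$ in $\Mat_2(\Z_\ell)$ is immediate from $m \leq m_1$ and the definition of reduction modulo $\ell^m$; that of $M_2$ uses $n_2 \geq n_1$ and $m \leq m_2$, together with the fact that $b' \in \Z_\ell$, which I will check by rewriting $r_2 - \ell^{n_2-n_1}(r_1 \bmod \ell^m) = (r_2 - \ell^{n_2-n_1} r_1) + \ell^{n_2-n_1}(r_1 - (r_1 \bmod \ell^m))$ and noting both summands have $\ell$-adic valuation at least $m$ by the very definition of $m$.

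For the reverse inclusion $\Mat_2(\Z_\ell)\cdot G \subseteq \Mat_2(\Z_\ell)\cdot A_1 + \Mat_2(\Z_\ell)\cdot A_2$, it suffices to express each row of $G$ as a $\Z_\ell$-linear combination of the rows of $A_1$ and $A_2$. After subtracting $\ell^{n_2 - n_1}$ times the first row of $A_1$ from the first row of $A_2$, the available rows become $(\ell^{n_1}, r_1)$, $(0, \ell^{m_1})$, $(0, r_2 - \ell^{n_2-n_1} r_1)$, $(0, \ell^{m_2})$. The $\Z_\ell$-ideal generated by the three second-column entries of the last three rows is precisely $\ell^m \Z_\ell$ by the definition of $m$, so $\ell^m$ itself is a $\Z_\ell$-combination of them, yielding $(0, \ell^m)$ as a combination of rows of $A_1, A_2$. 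Subtracting $(r_1 - (r_1 \bmod \ell^m))/\ell^m$ times this new row from $(\ell^{n_1}, r_1)$ produces the first row of $G$, finishing the argument.

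This proof is essentially a direct verification, so there is no single hard step; the one point requiring care is the $\Z_\ell$-integrality of the coefficient $b'$ in $M_2$, since this is exactly where the third term $\val_\ell(r_2 - \ell^{n_2-n_1} r_1)$ in the definition of $m$ becomes relevant — without it, the formula would fail precisely when the second columns of $A_1$ and $A_2$ conspire to produce unexpected cancellation.
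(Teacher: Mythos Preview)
Your proof is correct and follows essentially the same approach as the paper: both arguments reduce to showing that the $\Z_\ell$-row modules of $G$ and of $\{A_1,A_2\}$ coincide, with the key computation being that $(0,\ell^m)$ lies in the row span of $A_1,A_2$ and that $(\ell^{n_2},r_2)$ lies in the row span of $G$ because $\val_\ell(r_2-\ell^{n_2-n_1}r_1)\geq m$. The only cosmetic difference is that you package one inclusion via explicit factorization matrices $M_1,M_2$, whereas the paper stays with row vectors throughout.
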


\begin{proof}
  We have to prove that 
  $$\Mat_2(\Z_\ell)\cdot A_1 + \Mat_2(\Z_\ell)\cdot A_2 = \Mat_2(\Z_\ell)\cdot
  \begin{pmatrix}\ell^{n_1}& (r_1\bmod \ell^m)\\0&\ell^m\end{pmatrix}.$$
  We notice that the left-ideal generated by a matrix corresponds to the
  $\Z_\ell$-module generated by its rows.

  First we prove the inclusion
$$\Mat_2(\Z_\ell)\cdot A_1 + \Mat_2(\Z_\ell)\cdot A_2 \supset \Mat_2(\Z_\ell)\cdot
  \begin{pmatrix}\ell^{n_1}& (r_1\bmod \ell^m)\\0&\ell^m\end{pmatrix}.$$
    The vector $(0, \ell^m)$ clearly belongs to the $\Z_\ell$-module generated
    by the rows of $A_1$ and $A_2$ since $(0, \ell^{m_1}), (0, \ell^{m_2})$ and
    $(0, r_2-\ell^{n_2-n_1} r_1)$ belong to it. Hence,
    $(\ell^{n_1}, r_1\bmod \ell^m)$ also lies in this $\Z_\ell$-module.

    Let us now prove the other inclusion:
$$\Mat_2(\Z_\ell)\cdot A_1 + \Mat_2(\Z_\ell)\cdot A_2 \subset \Mat_2(\Z_\ell)\cdot
  \begin{pmatrix}\ell^{n_1}& (r_1\bmod \ell^m)\\0&\ell^m\end{pmatrix}.$$
    The only non-trivial thing that we need to prove is that $(\ell^{n_2}, r_2)$ belongs to
    the $\Z_\ell$-module generated by $(\ell^{n_1}, r_1)$ and $(0,
    \ell^m)$. We notice that $\val_\ell(r_2-\ell^{n_2-n_1}r_1)\geq m$, hence
    there exists $x\in\Z_\ell$ such that $r_2-\ell^{n_2-n_1}r_1=x\, \ell^m$.
    Therefore $(\ell^{n_2}, r_2) = \ell^{n_2-n_1}\cdot (\ell^{n_1}, r_1) +
    x\cdot (0, \ell^m)$, which concludes the proof.
\end{proof}

The main application of Proposition~\ref{prop:rgcd_mat_zl} shall appear in the
following setting. Let $\mathcal O\subset \B$ be a maximal order, and
$I\subset \mathcal O$ be a left-ideal given by a $\Z$-basis $b_1, b_2, b_3,
b_4\in \mathcal O$. Assume that we can compute an isomorphism
$\phi:\mathcal O\otimes \Z_\ell\to \Mat_2(\Z_\ell)$. Then a generator of
$I\otimes \Z_\ell$ is $\phi^{-1}(\rgcd(\phi(b_1), \phi(b_2), \phi(b_3),
\phi(b_4)))$, so we can compute this generator by using
Proposition~\ref{prop:rgcd_mat_zl}.

\section{Computing $2$-dimensional isomorphisms}\label{sec:algo_isom}

In this section, which contains our main algorithms, we start by describing a family of automorphisms of product surfaces.
Then we give a criterion for an isomorphism $ (\varphi_{ij}): E_1\times E_2 \to E_1' \times E_2'$
to exist, if we fix two isogenies ($\varphi_{11}$ and $\varphi_{21}$) of its matrix representation. This criterion can be made effective; it will then be used to compute
(in polynomial time) isomorphisms between products of curves.

\subsection{The case of automorphisms}\label{sec:automorphisms}

\begin{prop}\label{prop:autofromiso}
  Let $E_1, E_2$ be two elliptic curves defined over $\overline{\F_p}$,
  $\varphi: E_1 \to E_2$ be an isogeny, and $a,b,c,d \in \Z$ be integers such that $ad - bc\deg(\varphi)  = \pm 1$.
  Then the endomorphism $\Phi=\begin{pmatrix}
    a & b\widehat{\varphi} \\
    c\varphi & d \\
  \end{pmatrix}\in\End(E_1\times E_2)$ 
  is an automorphism.
\end{prop}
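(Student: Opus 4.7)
The plan is to invoke Proposition~\ref{prop:KaniDeg} (or equivalently its reformulation in Proposition~\ref{prop:isomorphism_suff_cond}) to compute the degree of $F$ directly, and then check that it equals $1$, since an endomorphism of $E_1 \times E_2$ of degree $1$ is an automorphism.

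First I would read off the entries and their degrees: with the notation of Proposition~\ref{prop:KaniDeg}, we have $\varphi_{11} = a\cdot \Id_{E_1}$, $\varphi_{12} = b\widehat\varphi$, $\varphi_{21} = c\varphi$, $\varphi_{22} = d\cdot \Id_{E_2}$, so $d_{11} = a^2$, $d_{12} = b^2\deg(\varphi)$, $d_{21} = c^2\deg(\varphi)$, $d_{22} = d^2$ (with the convention $\deg 0 = 0$, so edge cases where some coefficient vanishes cause no trouble). Next, compute the correction term:
\[
\widehat{\varphi_{12}}\varphi_{11} + \widehat{\varphi_{22}}\varphi_{21} = (b\varphi)(a) + (d)(c\varphi) = (ab + cd)\varphi,
\]
whose degree is $(ab+cd)^2 \deg(\varphi)$.

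Then I would apply Proposition~\ref{prop:KaniDeg} and simplify. Setting $n = \deg(\varphi)$, the formula yields
\[
\deg(F) = (a^2 + c^2 n)(b^2 n + d^2) - (ab+cd)^2 n.
\]
Expanding gives $a^2 b^2 n + a^2 d^2 + b^2 c^2 n^2 + c^2 d^2 n - a^2 b^2 n - 2abcd\, n - c^2 d^2 n = a^2 d^2 - 2abcd\, n + b^2 c^2 n^2 = (ad - bcn)^2$. Since by hypothesis $ad - bc \deg(\varphi) = \pm 1$, this equals $1$, so $F$ is an isogeny of degree $1$, hence an automorphism of $E_1 \times E_2$.

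There is no real obstacle; the only thing to notice is that the expansion collapses to the perfect square $(ad - bc\deg\varphi)^2$, which is exactly the quantity controlled by the hypothesis. Alternatively one could use Proposition~\ref{prop:isomorphism_suff_cond} directly: computing $d_{21}\mu - d_{11}\nu = c^2 n \cdot ab\varphi - a^2 \cdot cd\varphi = ac(bcn - ad)\varphi$ of degree $a^2 c^2 (ad - bcn)^2 n = a^2 c^2 n = d_{11} d_{21}$ gives the same conclusion immediately.
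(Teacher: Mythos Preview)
Your proof is correct and follows essentially the same approach as the paper: apply Proposition~\ref{prop:KaniDeg}, identify the entries and their degrees, expand, and observe that the result collapses to $(ad - bc\deg\varphi)^2 = 1$. The alternative via Proposition~\ref{prop:isomorphism_suff_cond} is a nice extra observation (though note it tacitly assumes $a,c\neq 0$ for the equality $d_{11}d_{21}=a^2c^2n$ to be meaningful as a nonzero check).
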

\begin{proof}
  Direct computations show that the inverse of $\Phi$ is $\Phi^{-1}= \begin{pmatrix}
    d & -b\widehat{\varphi} \\
    -c\varphi & a \\
  \end{pmatrix}$.
  We could also compute $\deg(\Phi)=(ad - bc\deg(\varphi))^2 = 1$ by Proposition~\ref{prop:KaniDeg}.
  % $$\begin{array}{rcl}
  %   \deg(F) &=& (a^2 + c^2\deg(\varphi))(b^2\deg(\varphi) + d^2) - \deg( b\varphi a + d c \varphi) \\
  %           &=& (a^2 + c^2\deg(\varphi))(b^2\deg(\varphi) + d^2) - (ba+dc)^2\deg(\varphi) \\
  %           &=& a^2d^2 + c^2b^2\deg(\varphi)^2 - 2abcd\deg(\varphi) \\
  %           &=& (ad - bc\deg(\varphi))^2\\
  %           &=& 1.
  % \end{array}$$
\end{proof}

Proposition~\ref{prop:autofromiso} implies that if we are able to compute an isogeny $\varphi: E_1 \to E_2$,
then the $2$-dimensional morphism
$\begin{pmatrix}
  1+\deg(\varphi) & \widehat{\varphi} \\
  \varphi & 1 \\
\end{pmatrix}$
is an isomorphism.
Thus knowing the endomorphism rings of $E_1$ and $E_2$
is enough to compute automorphisms of the surface $E_1 \times E_2$,
thanks to~\cite[Algo.~5]{WesoFocs21}.

\subsection{Completion of matrices of isogenies}
In this section, we investigate the following question: given two isogenies
$\varphi_{11}, \varphi_{21}$, can we compute isogenies $\varphi_{12}, \varphi_{22}$ such that the matrix
$(\varphi_{ij})$ is an isomorphism. We give a necessary and sufficient
criterion for the existence of such isogenies $\varphi_{12}, \varphi_{22}$, and we provide an algorithm to compute them.
First we state a useful lemma.

\begin{lemm}\label{lemm:SumIdealCoprime}
  Let $F, E, E_1, E_2$ be elliptic curves over $\overline{ \F_p}$, $\psi:F \to E$
  be a (possibly inseparable) isogeny, and $\varphi_{1}:E \to E_1$, $\varphi_{2}:E \to E_2$ be
  separable isogenies of coprime degrees. Write $K:= \ker(\varphi_{1}) \oplus \ker(\varphi_{2})$. 
  Then
  \begin{equation*}
    \deg(\varphi_2) \Hom(E_1, F)\varphi_1 \psi  +  \deg(\varphi_1) \Hom(E_2,
    F)\varphi_2 \psi  = \Hom(E/K, F) \pi_K \psi.
  \end{equation*}
  where $\pi_K: E \to E/K$ is the canonical separable isogeny with kernel $K$.
\end{lemm}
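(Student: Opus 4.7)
The key structural fact I will exploit is that $\pi_K$ factors on the right through each of $\varphi_1$ and $\varphi_2$. Since $\deg(\varphi_1)$ and $\deg(\varphi_2)$ are coprime and the $\varphi_i$ are separable, the sum $K=\ker(\varphi_1)\oplus\ker(\varphi_2)$ is direct and $\deg(\pi_K)=\#K=\deg(\varphi_1)\deg(\varphi_2)$. The containment $\ker(\varphi_i)\subset K=\ker(\pi_K)$ gives, for each $i\in\{1,2\}$, an isogeny $\tilde\varphi_i\colon E_i\to E/K$ such that $\pi_K=\tilde\varphi_i\varphi_i$; multiplicativity of degrees then forces $\deg(\tilde\varphi_i)=\deg(\varphi_{3-i})$, equivalently $\widehat{\tilde\varphi_i}\tilde\varphi_i=[\deg(\varphi_{3-i})]$.

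For the inclusion $\supseteq$, I would invoke Bezout to pick $a,b\in\Z$ with $a\deg(\varphi_1)+b\deg(\varphi_2)=1$. Then for every $f\in\Hom(E/K,F)$,
\[
f\pi_K \;=\; \bigl(a\deg(\varphi_1)+b\deg(\varphi_2)\bigr)f\pi_K \;=\; \deg(\varphi_1)\bigl(af\tilde\varphi_2\bigr)\varphi_2 \;+\; \deg(\varphi_2)\bigl(bf\tilde\varphi_1\bigr)\varphi_1,
\]
using $\pi_K=\tilde\varphi_2\varphi_2$ in the first summand and $\pi_K=\tilde\varphi_1\varphi_1$ in the second. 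Post-composing with $\psi$ on the right exhibits $f\pi_K\psi$ as an element of the left-hand side.

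For the inclusion $\subseteq$, I would use the dual factorizations. Given $h\in\Hom(E_1,F)$,
\[
\deg(\varphi_2)\cdot h\varphi_1\psi \;=\; h\,\widehat{\tilde\varphi_1}\tilde\varphi_1\varphi_1\psi \;=\; \bigl(h\widehat{\tilde\varphi_1}\bigr)\pi_K\psi,
\]
with $h\widehat{\tilde\varphi_1}\in\Hom(E/K,F)$, so this lies in $\Hom(E/K,F)\pi_K\psi$; the analogous argument handles the $\deg(\varphi_1)\Hom(E_2,F)\varphi_2\psi$ summand.

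The presence of a possibly inseparable $\psi$ plays no real role, since it is only carried through as a right factor throughout. The only subtle point is the rigidity of the factorizations $\pi_K=\tilde\varphi_i\varphi_i$: they are defined only up to a post-composition by an automorphism of $E/K$, but this ambiguity is absorbed into $\Hom(E/K,F)$ and does not affect the equality of left-$\End(F)$-modules. Modulo this bookkeeping, the proof is a short Bezout/dual computation, so I do not anticipate a serious obstacle.
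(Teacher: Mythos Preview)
Your proof is correct and takes a genuinely different route from the paper's. The paper argues via the correspondence between left-ideals in $\End(F)$ and finite subgroup schemes of $F$: it computes directly that
\[
\ker(d_2\varphi_1\psi)\cap\ker(d_1\varphi_2\psi)=\psi^{-1}\bigl((\ker\varphi_1+E[d_2])\cap(\ker\varphi_2+E[d_1])\bigr)=\psi^{-1}(K)=\ker(\pi_K\psi),
\]
using coprimality for the middle equality, and then concludes by invoking that the sum of two kernel ideals is the kernel ideal attached to the intersection of the corresponding kernels. Your argument instead stays entirely at the level of morphisms: you use the factorizations $\pi_K=\tilde\varphi_i\varphi_i$ together with $\widehat{\tilde\varphi_i}\tilde\varphi_i=[\deg\varphi_{3-i}]$ for the inclusion $\subseteq$, and a Bezout identity $a\,d_1+b\,d_2=1$ for $\supseteq$. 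The upside of your approach is that it is self-contained and avoids appealing to the nontrivial fact that every left-ideal in $\End(F)$ is a kernel ideal; the paper's approach, by contrast, is a one-line application of that dictionary once the kernel computation is done, and it treats the possibly inseparable $\psi$ uniformly through scheme-theoretic preimages rather than as a passive right factor.
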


\begin{proof}
  Set $d_1:=\deg(\varphi_1)$, $d_2:=\deg(\varphi_2)$, $I_1 :=d_2 \Hom(E_1, F)\varphi_1 \psi$,\\
  $I_2 := d_1 \Hom(E_2, F)\varphi_2 \psi$ and $I_K := \Hom(E/K, F)\pi_K \psi$.
  Direct computations show that
  $$\begin{array}{clcl}
    &\ker(d_2\varphi_1 \psi) \cap \ker(d_1 \varphi_2 \psi) & =& \psi^{-1}( \ker( d_2 \varphi_1) )
                                                            \cap
                                                            \psi^{-1}(\ker( d_1 \varphi_2)) \\
                                                          =& \psi^{-1} \left(  \ker( d_2 \varphi_1)
                                                            \cap \ker( d_2 \varphi_1) \right)
                                                          &=& \psi^{-1}\left(
                                                            ( \ker \varphi_1 + E[d_2] ) \cap
                                                            ( \ker \varphi_2 + E[d_1]) \right) \\
                                                           =& \psi^{-1}( \ker \varphi_1 \oplus \ker \varphi_2)
                                                          &=& \psi^{-1}( \ker( \pi_K)) \\
                                                           =& \ker ( \pi_K \psi).&&
  \end{array}$$

  Noticing that $I_1 + I_2 = I_{ \ker(d_2\varphi_1 \psi) \cap \ker(d_1 \varphi_2
  \psi)}$ and $\Hom(E/K, F) \pi_K \psi = I_{\ker ( \pi_K \psi)}$ concludes the proof.
  \end{proof}

We are now ready to state a key result of the paper.

\begin{theo}\label{theo:isom_iff}
   Let $E_1, E_2, E_1', E_2'$ be four isogenous elliptic curves defined over
   $\overline{ \F_p}$, and
   $\varphi_{11}:E_1\to E_1'$, $\varphi_{21}: E_1\to E_2'$
   be separable isogenies
   with coprime degrees.  There
   exist isogenies $\varphi_{12}:E_2\to E_1'$, $\varphi_{22}: E_2\to E_2'$
   such that $\phi=(\varphi_{ij})_{i,j\in\{1,2\}}\in \Hom(E_1\times E_2,
   E_1'\times E_2')$ is an isomorphism if and only if
   $ E_1 / \left( \ker(\varphi_{11})\oplus\ker(\varphi_{21}) \right)$ and $E_2$ are isomorphic.
\end{theo}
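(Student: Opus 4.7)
My plan is to prove the two implications of the equivalence separately: the forward direction by a geometric argument identifying a certain quotient of $E_1' \times E_2'$ in two distinct ways, and the backward direction by an explicit construction of $\varphi_{12}, \varphi_{22}$ followed by verification of the criterion of Proposition~\ref{prop:isomorphism_suff_cond}.

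\textit{Forward direction.} Assume $\phi = (\varphi_{ij})$ is an isomorphism and write $d_{ij} := \deg \varphi_{ij}$. Since $\gcd(d_{11}, d_{21}) = 1$, we have $\ker \varphi_{11} \cap \ker \varphi_{21} = 0$, so the morphism $(\varphi_{11}, \varphi_{21}) : E_1 \to E_1' \times E_2'$ is injective, and its image $A$ is an abelian subvariety isomorphic to $E_1$. Because $\phi$ is an isomorphism, $A$ and $B := \phi(\{0\} \times E_2)$ are complementary in $E_1' \times E_2'$, i.e.\ $A \cap B = 0$ and $A + B = E_1' \times E_2'$; hence $(E_1' \times E_2')/A \cong B \cong E_2$. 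On the other hand, the composition $E_1' \hookrightarrow E_1' \times E_2' \twoheadrightarrow (E_1' \times E_2')/A$ is a surjective isogeny with kernel $E_1' \cap A = \varphi_{11}(\ker \varphi_{21})$; precomposing it with $\varphi_{11}$ gives an isogeny $E_1 \to (E_1' \times E_2')/A$ of kernel $\ker \varphi_{11} + \ker \varphi_{21} = K$. Comparing the two descriptions of the quotient yields $E_1/K \cong E_2$.

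\textit{Backward direction.} Assume $E_1/K \cong E_2$ and fix an isomorphism $\iota : E_2 \to E_1/K$, so that $\pi := \iota^{-1} \circ \pi_K : E_1 \to E_2$ is an isogeny of degree $d_{11} d_{21}$ with kernel $K$. Since $\ker \varphi_{11} \subset K = \ker \pi$, there exists a unique isogeny $\theta_1 : E_1' \to E_2$ satisfying $\theta_1 \varphi_{11} = \pi$, and comparing degrees gives $\deg \theta_1 = d_{21}$. Symmetrically, a unique $\theta_2 : E_2' \to E_2$ satisfies $\theta_2 \varphi_{21} = \pi$ and $\deg \theta_2 = d_{11}$. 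By Bezout, I pick integers $a, b$ with $a d_{21} - b d_{11} = 1$; by translating the solution by multiples of $(d_{11}, d_{21})$, I arrange for $a$ and $b$ both to be nonzero. Then I define $\varphi_{12} := a \widehat{\theta_1}$ and $\varphi_{22} := b \widehat{\theta_2}$, so that $\mu := \widehat{\varphi_{12}} \varphi_{11} = a \pi$ and $\nu := \widehat{\varphi_{22}} \varphi_{21} = b \pi$. Consequently $d_{21} \mu - d_{11} \nu = (a d_{21} - b d_{11}) \pi = \pi$, which has degree $d_{11} d_{21}$, and Proposition~\ref{prop:isomorphism_suff_cond} concludes that $\phi = (\varphi_{ij})$ is an isomorphism.

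\textit{Main obstacle.} The delicate point is the Bezout step in the backward direction: the criterion of Proposition~\ref{prop:isomorphism_suff_cond} forces the normalization $a d_{21} - b d_{11} = 1$, which is solvable precisely because $d_{11}$ and $d_{21}$ are coprime. This is the place where the coprimality hypothesis is genuinely used to \emph{construct} the missing column, rather than merely to guarantee $\ker \varphi_{11} \cap \ker \varphi_{21} = 0$. For the forward direction, the principal computational step is the identification $E_1' \cap A = \varphi_{11}(\ker \varphi_{21})$ inside $E_1' \times E_2'$, which via the third isomorphism theorem identifies the quotient $(E_1' \times E_2')/A$ with $E_1/K$.
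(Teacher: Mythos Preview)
Your proof is correct, and it follows a genuinely different route from the paper's.

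The paper handles both directions in a single equivalence chain, working entirely through the quaternion ideal framework: it fixes an auxiliary isogeny $\psi:E_1\to E_2$, introduces the left-$\End(E_2)$-ideals $J_{11}=\Hom(E_1',E_2)\varphi_{11}\widehat\psi$, $J_{21}=\Hom(E_2',E_2)\varphi_{21}\widehat\psi$, and $J_K=\Hom(E_1/K,E_2)\pi_K\widehat\psi$, invokes Lemma~\ref{lemm:SumIdealCoprime} to get $J_K=d_{21}J_{11}+d_{11}J_{21}$, and then shows via Proposition~\ref{prop:isomorphism_suff_cond} that the existence of a completing second column is equivalent to $J_K$ containing an element of reduced norm $d_{11}d_{21}\deg(\psi)$, i.e.\ to $J_K$ being principal, i.e.\ to $E_1/K\cong E_2$. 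By contrast, you treat the two directions asymmetrically: your forward implication is a pure geometric quotient argument (identifying $(E_1'\times E_2')/\phi(E_1\times\{0\})$ both with $E_2$ and with $E_1/K$) that never touches Proposition~\ref{prop:isomorphism_suff_cond}, while your backward implication is an explicit B\'ezout construction of $\varphi_{12},\varphi_{22}$ verified against that proposition. Your argument is more elementary and self-contained, and the backward construction is pleasantly explicit; the paper's ideal-theoretic version has the advantage of being uniform and of plugging directly into Algorithm~\ref{algo:isomorphism_completion}, where the element $\xi$ of minimal norm in $J_K$ is exactly what the algorithm searches for.
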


\begin{proof}
  Let $\psi: E_1\to E_2$ be an isogeny.
  Let $K$ denote the subgroup $\ker(\varphi_{11})\oplus\ker(\varphi_{21})$ of $E_1$. Let 
  $\pi_K:E_1 \to E_1/K$ be the associated canonical isogeny.
  Set $J_{11}:=\Hom(E_1', E_2)\varphi_{11}\widehat\psi$,
  $J_{21}:=\Hom(E_2',E_2)\varphi_{21}\widehat\psi$, and
  $J_K:=\Hom(E_1/K,E_2)\pi_K  \widehat \psi$,
  which are left-ideals in $\End(E_2)$.

  By Proposition~\ref{prop:isomorphism_suff_cond}, there
  exist isogenies $\varphi_{12}:E_2\to E_1'$, $\varphi_{22}: E_2\to E_2'$
  such that $\phi=(\varphi_{ij})_{i,j\in\{1,2\}}\in \Hom(E_1\times E_2,
  E_1'\times E_2')$ is an isomorphism if and only if there exist
  isogenies $\mu, \nu: E_1\to E_2$ which factors respectively through $\varphi_{11}$
  and $\varphi_{21}$ and such that $\deg(d_{21}\mu-d_{11}\nu) =
  d_{11}d_{21}$, where $d_{11}=\deg(\varphi_{11})$ and $d_{21} =
  \deg(\varphi_{21})$.
  Equivalently, $\deg((d_{21}\mu-d_{11}\nu) \widehat\psi) = d_{11}d_{21}\deg(\psi)$,
  with $\mu\widehat\psi\in J_{11}, \nu\widehat\psi\in J_{21}$.
  Since Lemma~\ref{lemm:SumIdealCoprime} implies that $J_K =
  d_{21}J_{11}+ d_{11}J_{21}$, it is equivalent to the existence of a $\sigma \in J_K$ such that
  $\deg( \sigma) = d_{11}d_{21} \deg(\psi)$. Remark that by definition, such a $\sigma \in J_K$
  would factor as $\sigma = \tau \pi_K \widehat \psi $ for some $\tau \in \Hom(E_1 / K , E_2)$.
  Thus the equation $\deg( \sigma )= d_{11}d_{21} \deg(\psi)$ is equivalent to
  $\deg( \tau) \deg( \pi_K) \deg(\psi)=d_{11}d_{21} \deg(\psi)$ by multiplicativity of the degree, which
  reduces to $\deg(\tau) = 1$, since $\deg(\pi_K) = d_{11}d_{21}$.

  Therefore, there
  exist isogenies $\varphi_{12}:E_2\to E_1'$, $\varphi_{22}: E_2\to E_2'$
  such that $\phi=(\varphi_{ij})\in \Hom(E_1\times E_2,
  E_1'\times E_2')$ is an isomorphism if and only if there exists $\tau \in \Hom(E_1 / K, E_2)$ with $\deg(\tau)=1$, i.e., $E_1 / K$ and $E_2$ are isomorphic.
\end{proof}

Theorem~\ref{theo:isom_iff} is actually effective, provided that we know the endomorphism rings
of the curves. Algorithm~\ref{algo:isomorphism_completion} computes such an
isomorphism.

\begin{prop}\label{prop:isomcompl_correct}
  Assuming GRH, Algorithm~\ref{algo:isomorphism_completion} is correct and it runs in time
  polynomial in $\log(p)$ and in the size of the input.
\end{prop}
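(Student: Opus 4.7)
My plan is to describe the algorithm as a direct translation of the constructive proof of Theorem~\ref{theo:isom_iff} into ideal arithmetic in $\B$, and to read off correctness from that theorem together with Proposition~\ref{prop:isomorphism_suff_cond}. Concretely, given left-$\End(E_1)$-ideals $I_{\varphi_{11}}, I_{\varphi_{21}}$ representing $\varphi_{11}, \varphi_{21}$, I would first compute the left-$\End(E_1)$-ideal $I_K$ of norm $d_{11}d_{21}$ corresponding to $\pi_K:E_1\to E_1/K$, where $K = \ker\varphi_{11}\oplus\ker\varphi_{21}$. Since $\gcd(d_{11},d_{21})=1$, this amounts to an elementary $\Z$-lattice intersection in $\B$ and can be performed in polynomial time via Hermite Normal Form.

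Next, I would test whether the right order $\mathcal{O}_R(I_K)$ is conjugate in $\B$ to the given embedding of $\End(E_2)$; if not, the hypothesis of Theorem~\ref{theo:isom_iff} fails and the algorithm returns failure. Otherwise, I would realize an isomorphism $\tau:E_1/K\to E_2$ by computing a connecting ideal from $\mathcal{O}_R(I_K)$ to $\End(E_2)$, certifying that it is principal, and producing a generator; this is done using Wesolowski's variant of KLPT~\cite[Algo.~5]{WesoFocs21}, whose expected polynomial-time complexity is proved under GRH. The output is the left-$\End(E_1)$-ideal $I_\sigma := I_K\cdot\tau$ of norm $d_{11}d_{21}$ whose right order is the prescribed embedding of $\End(E_2)$, representing the isogeny $\sigma = \tau\pi_K$ appearing in the proof of Theorem~\ref{theo:isom_iff}. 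This is the step I expect to be the main obstacle, since it is the only place where an explicit isomorphism between distinct quaternion orders has to be exhibited, and also the only place where GRH enters the complexity analysis.

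Finally, following the last part of the proof of Theorem~\ref{theo:isom_iff}, I would use Lemma~\ref{lemm:SumIdealCoprime} to decompose $\sigma$ as $d_{21}\mu - d_{11}\nu$ with $\mu$ factoring through $\varphi_{11}$ and $\nu$ factoring through $\varphi_{21}$. Since the identity $J_K = d_{21}J_{11} + d_{11}J_{21}$ provided by the lemma is a sum over coprime indices, such a decomposition can be produced by $\Z$-linear algebra (Bezout together with HNF) on the bases of the relevant left-$\End(E_2)$-ideals. Division of $\mu$ by $\varphi_{11}$ and of $\nu$ by $\varphi_{21}$ in quasi-linear time, via Proposition~\ref{prop:complexityfactor}, then yields $\widehat\varphi_{12}$ and $\widehat\varphi_{22}$, and conjugation gives $\varphi_{12},\varphi_{22}$. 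Correctness is immediate: by construction $\deg(d_{21}\mu - d_{11}\nu) = d_{11}d_{21}$, so Proposition~\ref{prop:isomorphism_suff_cond} guarantees that $(\varphi_{ij})$ is an isomorphism. All steps apart from the second reduce to Hermite Normal Form on integer matrices of bounded rank with entries polynomial in $\log p$, so the overall complexity is polynomial in $\log p$ conditional on GRH through the subroutine of~\cite{WesoFocs21}.
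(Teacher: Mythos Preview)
Your overall strategy coincides with the paper's: both proofs follow the constructive argument of Theorem~\ref{theo:isom_iff}, transport the problem to ideal arithmetic, exhibit the element $\sigma$ (called $\xi$ in the paper) of degree $d_{11}d_{21}$ in $J_K$, decompose it along $d_{21}J_{11}+d_{11}J_{21}$, and recover $\varphi_{12},\varphi_{22}$ by the principal-ideal division of Proposition~\ref{prop:complexityfactor}. However, two points in your write-up do not match the actual algorithm.

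First, you invoke Wesolowski's KLPT variant~\cite[Algo.~5]{WesoFocs21} to produce the isomorphism $\tau$, and you identify this as the place where GRH enters. That is not the tool the algorithm uses, and it is not where GRH is needed. The hypothesis $E_1/K\cong E_2$ makes $J_K$ a \emph{principal} left-$\mO[2]$-ideal of reduced norm $d_{11}d_{21}\Nrd(I_\psi)$; a generator is therefore simply a shortest vector of $J_K$ for the norm form, and the paper obtains it by exact shortest-vector computation in a rank-$4$ lattice (Step~\ref{step:lattice_red}), which is unconditionally polynomial-time. KLPT would be both the wrong primitive (it produces equivalent ideals of prescribed norm, not generators of a given principal ideal) and overkill. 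In the paper, GRH is invoked only at the very end, to convert the resulting ideals $I_{ij}$ into efficient representations of the isogenies $\varphi_{ij}$.

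Second, you stop after obtaining the ideals for $\varphi_{12}$ and $\varphi_{22}$, but an ideal determines its isogeny only up to post-composition by an automorphism of the codomain, and Proposition~\ref{prop:isomorphism_suff_cond} is sensitive to these signs. The algorithm therefore enumerates the finitely many (at most $\lvert\Aut(E_i')\rvert\le 24$) compatible choices and tests with Proposition~\ref{prop:KaniDeg} which matrix has degree~$1$. Without this step your output is only an isomorphism up to a bounded search that you have not accounted for.
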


\begin{proof}

  First we prove that Algorithm~\ref{algo:isomorphism_completion} is correct.
  In fact, Algorithm~\ref{algo:isomorphism_completion} follows the proof of
  Theorem~\ref{theo:isom_iff}. An isogeny associated to the ideal $I_\psi$
  plays the role of $\psi$ in the proof of Theorem~\ref{theo:isom_iff}. The
  ideals $I_{11}$, $I_{21}$ correspond to the isogenies $\varphi_{11},
  \varphi_{21}$ in Theorem~\ref{theo:isom_iff}, and the
  ideals $J_{11}$ and $J_{21}$ play the same role as in the proof of
  Theorem~\ref{theo:isom_iff}. We now prove that the endomorphism $\xi$ computed
  in Step~\ref{step:lattice_red} satisfies the requirements of $\sigma$ in the
  proof of Theorem~\ref{theo:isom_iff}, namely that $\Nrd(\xi) =
  d_{11}d_{21}\Nrd(I_{\psi})$. By the same argument as in the proof of
  Theorem~\ref{theo:isom_iff}, $J_K$ is a principal left-ideal (because $E_1/K \simeq E_2$) of reduced
  norm $d_{11}d_{21}\Nrd(I_{\psi})$, so it contains an element with this
  reduced norm; this proves that $\Nrd(\xi)= d_{11}d_{21}\Nrd(I_{\psi})$.
  Theorem~\ref{theo:isom_iff} also asserts that at least one of the
  matrices computed at Step~\ref{step:automorphisms} is an isomorphism.

  Let us now prove that the complexity is polynomial in the input
  size. Most steps reduce to $\Z$-linear algebra; this boils down to
  computing Hermite Normal Forms, which is
  done in time polynomial in
  the input size.
  Step~\ref{step:lattice_red} involves computing the shortest vector in a
  lattice of dimension $4$, with the positive definite quadratic
  form $(x_1, x_2, x_3, x_4)\mapsto x_1^2 + x_2^2 + p(x_3^2 + x_4^2)$. This is done in time polynomial in the
  input size and in $\log(p)$~\cite[Thm.~4.2.1]{nguyen2009low}.
  The combinatorial factor in Step~\ref{step:automorphisms} does not increase
  the complexity since the number
  of possible isogenies $E\to E'$ that are represented by the same left-ideal
  in $\End(E)$ equals the order of $\Aut(E)$. For most elliptic curves,
  $\Aut(E) = \{1, -1\}$, and in any case $\lvert\Aut(E)\rvert\leq
  24$~\cite[Appendix~A, Prop.~1.2.(c)]{Silv}.
  Assuming GRH, converting the ideal representation to an efficient representation can be
  done in polynomial-time, see e.g.~\cite[Appendix~C]{EffRpz}.
\end{proof}

\begin{algorithm}
  \caption{\IsomorphismCompletion{}\label{algo:isomorphism_completion}}
    \INPUT{Four maximal curves $E_1, E_1', E_2, E_2'$ over $\F_{p^2}$; $\Z$-bases of maximal orders $\mO[1],
     \mO[2]\subset \B$ and isomorphisms $\mO[1]\cong
    \End(E_1)$, $\mO[2]\cong\End(E_2)$; $\Z$-bases of left-ideals $I_{11}, I_{21}\subset\mO[1]$
    of coprime degrees which correspond to isogenies $\varphi_{11}:E_1\to E_1'$, $\varphi_{21}:E_1\to
    E_2'$ such that $E_2 \cong E_1 / \left( \ker(\varphi_{11}) \oplus \ker(\varphi_{21}) \right)$.
  }
    \OUTPUT{An efficient representation of a $2\times
    2$ matrix of isogenies ($\varphi_{ij}$) representing an isomorphism
    $E_1\times E_2\to
    E_1'\times E_2'$ such that $\Hom(E_2, E_1)\varphi_{11}\cong I_{11}$ and
    $\Hom(E_2', E_1)\varphi_{21}\cong I_{21}$.}
    Compute $d\in\Z$ such that $d\mO[1]\mO[2]\subset\mO[1]\cap \mO[2]$ and set
    $I_{\psi}:=d\mO[1]\mO[2]$, which is a connecting ideal between
    $\mO[1]$ and $\mO[2]$\tcp*{see \cite[Algo.~3.5]{kirschmer2010algorithmic}}
    
    Compute $\Z$-bases of $J_{11} := \overline I_\psi I_{11}$ and
    $J_{21} := \overline I_\psi I_{21}$\;
    Set $d_{11} := \Nrd(I_{11})$ and $d_{21} = \Nrd(I_{21})$\;
    Compute a $\Z$-basis of $J_K = d_{21}J_{11}+d_{11}J_{21}$\;
    
    Compute an element $\xi$ in $J_K$ whose reduced norm is
    minimal\;\label{step:lattice_red} \tcp{Lattice reduction in dimension
    $4$}
    
    Using linear algebra over $\Z$, compute $\xi_{11}\in J_{11},
    \xi_{21}\in J_{21}$ such that $d_{21}\xi_{11}-d_{11}\xi_{21} = \xi$\;
    
    Compute left-ideals $I_{12}$ and $I_{22}$ in the right-orders of
    $I_{11}$ and $I_{21}$ respectively, such that
    $\overline I_\psi I_{11} \overline I_{12} = \mO[2]\xi_{11}$ and $\overline I_\psi
    I_{21} \overline I_{22} =
    \mO[2]\xi_{21}$\;\tcp{Prop.~\ref{prop:complexityfactor} and
    Remark~\ref{remq:pb_red_facto}}
    Compute efficient representations of all possible matrices
    $(\varphi_{ij})$ such that $\varphi_{ij}\in\Hom(E_j, E_i')$ and $\Hom(E_i',
    E_j)\varphi_{ij}\cong I_{ij}$ as $\End(E_j)$
    left-modules\label{step:automorphisms}\;
    Using Proposition~\ref{prop:KaniDeg}, find a matrix among them which
    is an isomorphism and return it\;
\end{algorithm}

\subsubsection{Experiments}\label{para:expe1}

We present the first part of our experimental results. We would like to emphasize that the goal of our proof-of-concept implementation is to illustrate the practicality of the algorithms and to provide experimental evidence of their correctness. Our implementation is not optimized, and we do not claim that it is competitive in terms of speed and general efficiency.
The presented results, as well as those described in Paragraph~\ref{para:expe2},
are obtained runing our code under \texttt{Magma} version V2.28-3, using a 13th Gen Intel Core i7-1365U CPU.

Let us describe the file \textsf{ExperimentResults\_part1.mgm} available at~\cite{artifact}. 
We illustrate Algorithm~\ref{algo:isomorphism_completion}, 
\textit{i.e.}, we compute an isomorphism $E_0 \times E \to E_1' \times E_2'$, given isogenies $\varphi_{11}:E_0 \to E_1'$
and $\varphi_{21}:E_0 \to E_2'$ of coprime degree. Those isogenies are given by
left $\mO[0]$-ideals, denoted $I_{11}$ and $I_{21}$ respectively.

First we let the user choose the following parameters:
a lower bound for the prime $p$, and the norm of the two ideals $I_{11}$ and $I_{21}$.
Then we randomly compute such ideals, that determine (the isomorphism class of) $E$ according to
Theorem~\ref{theo:isom_iff}. We then recover the corresponding order $\mO \simeq \End(E)$ as the
right order of the left $\mO[0]$-ideal $I_K:=d_{11}I_{21} + d_{21}I_{11}$, by application of Lemma~\ref{lemm:SumIdealCoprime}.
We conclude by computing the ideals $I_{12}$ and $I_{22}$,
with a call to Algorithm~\ref{algo:isomorphism_completion}.
We can finally check that the ideals $I_{ij}$ represent an isomorphism, thanks to
the following lemma.

\begin{lemm}\label{lemm:CheckDeg}
   Let $E_1,E_2,E_1',E_2'$ be elliptic curves defined over $\overline{\F_p}$.
   For $i,j\in\{1,2\}$, let $\varphi_{ij}\in\Hom(E_j, E_i')$ be a morphism of
   degree $d_{ij}\in\N$. Denote $\Phi:E_1 \times E_2 \to E_1' \times E_2'$ the $2$-dimensional
   morphism given by the matrix $(\varphi_{ij})_{ij}$. Let $\psi:E_2 \to E_1$
   be a nonzero morphism.
   Then
   \begin{equation*}
     \deg \begin{pmatrix}
    \varphi_{11} \circ \psi & \varphi_{12} \\
    \varphi_{21} \circ \psi & \varphi_{22} \\           
 \end{pmatrix} = \deg(\psi) \deg(\Phi).
   \end{equation*}
 \end{lemm}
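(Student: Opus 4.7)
The plan is to factor the $2 \times 2$ matrix of morphisms as a composition of $\Phi$ with a simple block-diagonal map involving $\psi$, and then use multiplicativity of the degree under composition.

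More precisely, let $\Psi : E_2 \times E_2 \to E_1 \times E_2$ be the morphism given by $\Psi(y, x_2) = (\psi(y), x_2)$, i.e.\ by the matrix
\[
\Psi = \begin{pmatrix} \psi & 0 \\ 0 & \mathrm{id}_{E_2} \end{pmatrix}.
\]
A direct computation shows that
\[
\Phi \circ \Psi (y, x_2) = \bigl(\varphi_{11}(\psi(y)) + \varphi_{12}(x_2),\ \varphi_{21}(\psi(y)) + \varphi_{22}(x_2)\bigr),
\]
which is exactly the morphism represented by the matrix appearing on the left-hand side of the lemma. Hence the degree in question equals $\deg(\Phi \circ \Psi) = \deg(\Phi)\deg(\Psi)$.

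It remains to show that $\deg(\Psi) = \deg(\psi)$. The cleanest way is to observe that $\Psi$ is simply the product morphism $\psi \times \mathrm{id}_{E_2}$, so its degree is $\deg(\psi) \cdot \deg(\mathrm{id}_{E_2}) = \deg(\psi)$. Alternatively, one can apply Proposition~\ref{prop:KaniDeg} with the convention that the zero morphism has degree $0$: the formula gives $(\deg(\psi) + 0)(0+1) - \deg(\widehat 0 \cdot \psi + \widehat{\mathrm{id}} \cdot 0) = \deg(\psi)$.

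There is no real obstacle here; the lemma is essentially a bookkeeping consequence of the identification of $2$-dimensional morphisms with matrices of $1$-dimensional morphisms (so that composition corresponds to matrix product) together with the multiplicativity of the degree. The only minor subtlety is the convention that $0$ has degree $0$, which is already fixed in the statement of Proposition~\ref{prop:KaniDeg}.
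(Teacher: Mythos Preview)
Your proof is correct. The factorization
\[
\begin{pmatrix}\varphi_{11}\psi & \varphi_{12}\\ \varphi_{21}\psi & \varphi_{22}\end{pmatrix}
=\Phi\circ\begin{pmatrix}\psi & 0\\ 0 & \mathrm{id}_{E_2}\end{pmatrix}
\]
is valid, multiplicativity of the degree under composition of isogenies of abelian varieties gives $\deg(\Phi)\deg(\Psi)$, and $\deg(\psi\times\mathrm{id})=\deg(\psi)$ is immediate.

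The paper's proof is the one-line ``direct consequence of Proposition~\ref{prop:KaniDeg}'', which amounts to applying Kani's formula to both matrices and comparing: the left column degrees pick up a factor $\deg(\psi)$, and the cross term becomes $\deg\bigl((\widehat\varphi_{12}\varphi_{11}+\widehat\varphi_{22}\varphi_{21})\psi\bigr)=\deg(\psi)\deg(\widehat\varphi_{12}\varphi_{11}+\widehat\varphi_{22}\varphi_{21})$, so the whole expression scales by $\deg(\psi)$. Your route is a bit more conceptual and avoids redoing the Kani computation; the paper's route stays closer to the explicit degree formula already established. Both are equally short once one sees the point, and your alternative check of $\deg(\Psi)$ via Proposition~\ref{prop:KaniDeg} ties the two arguments together nicely.
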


 \begin{proof}
   This is a direct consequence of Proposition~\ref{prop:KaniDeg}.
 \end{proof}

\begin{expl}
  We report on experimental results obtained by running the file
  \textsf{ExperimentResults\_part1.mgm} with seed $12345$.
  In this example, $p=503$ and $\ell_{11}=3$, $m_{11}=6$ and $\ell_{21}=5$, $m_{21}=4$. We denote by $1, i, j, k$ the usual generators
  of $\B$. 
  The input ideals are
  $I_{11}= \langle 729, \, 729\, i, 321/2 + 553\,\,  i + k/2, \, 553 + 1137 \, i/2 + j/2 \rangle$
  of norm $\ell_{11}^{m_{11}}$, and
  $I_{21} =   \langle 625, \, 625 \, i, \,  681/2 + 553\, i + k/2, \, 553 + 569\, i/2 + j/2 \rangle$
  of norm $\ell_{21}^{m_{21}}$.
  Then using Lemma~\ref{lemm:CheckDeg}, we check that the ideals $I_{11}, I_{21}$, and $I_{12} ,I_{22}$
  returned by \textsf{IsomorphismCompletion}, represent an isomorphism.

  With the same parameters $p, \ell_{11}, m_{11}, \ell_{21}, m_{21}$,
  and averaging 100 unseeded runs of \textsf{ExperimentResults\_part1.mgm}, we obtain a mean execution time of $0.157$ seconds.
  The memory usage remains constant at $32.09$MB.
\end{expl}
\subsection{Computing isomorphisms $E_0^2 \to E_1' \times E_0$}
\label{sc:fromE0}

In this section, we focus on a special $2$-dimensional instance of Problem~\ref{prob:effDOS}: we
assume that the endomorphism
rings of all curves are known, and that we also know subrings of $\End(E_1)$
and $\End(E_1')$ which are isomorphic to a low-discriminant imaginary quadratic
order. In this case, we provide a fast algorithm to compute the corresponding isomorphism,
described in Algorithm~\ref{algo:isom1}.
In order to simplify the exposition, we assume
that $E_1 = E_2 = E_2'$. In fact, this assumption does not lose any generality,
see Remark~\ref{remq:generalizationE0}. Also, for the sake of simplicity, we
assume throughout this section that the curve for which we know a subring of
endomorphisms isomorphic to a low-discriminant imaginary quadratic order is the
curve $E_0$ defined over $\F_{p^2}$ (with $p\equiv 3\bmod 4$) by the equation
$y^2 = x^3+x$. Its endomorphism ring contains a subring isomorphic to $\Z[i]$.
However, all the results presented in this section can be generalized without
any major difficulty to other curves. In particular, when $p\equiv 1\bmod 4$, a
curve with a low-discriminant can also be explicitly computed,
see~\cite[Sec.~3.1]{Deuring4People}.
In summary, our objective in this section is to provide a fast algorithm for
the following problem:
\begin{prob}[Low-discriminant Deligne-Ogus-Shioda problem]\label{prob:effDOSE0}
  Given a maximal supersingular elliptic curve $E_1'$ defined over
  $\mathbb F_{p^2}$ and its endomorphism ring, compute an
  $\F_{p^2}$-isomorphism $E_0\times E_0\to E_1'\times E_0$.
\end{prob}

The following statement gives sufficient conditions to use Theorem~\ref{theo:isom_iff}.

\begin{prop}\label{prop:SumKernelEndo}
   Let $E$, $E_1'$ be elliptic curves defined over
   $\overline{ \F_p}$ and let $\varphi:E\to E_1'$ be a separable isogeny. 
   Let $\alpha, \nu\in \End(E)$ be endomorphisms of coprime degrees such that $\deg(\alpha) =
   \deg(\varphi)$ and $\alpha\nu\in \Hom(E_1', E)\varphi$. Then
   $\ker(\nu)\oplus\ker(\varphi)$ is the kernel of the endomorphism 
   $\alpha\nu : E \to E $.
\end{prop}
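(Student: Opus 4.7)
The plan is to establish the two inclusions $\ker(\nu)\oplus\ker(\varphi)\subseteq\ker(\alpha\nu)$ and then conclude by a degree count. The key observation is that the hypothesis $\alpha\nu\in\Hom(E_1',E)\varphi$ is exactly the statement that $\alpha\nu$ factors through $\varphi$: there exists $\beta\in\Hom(E_1',E)$ with $\alpha\nu=\beta\varphi$. In particular $\ker(\varphi)\subseteq\ker(\beta\varphi)=\ker(\alpha\nu)$. Of course $\ker(\nu)\subseteq\ker(\alpha\nu)$ is immediate.

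Next I would show that the subgroup scheme $\ker(\nu)+\ker(\varphi)\subseteq E$ is actually a direct sum inside $\ker(\alpha\nu)$. The order of $\ker(\nu)$ (as a finite group scheme) equals $\deg(\nu)$ and the order of $\ker(\varphi)$ equals $\deg(\varphi)=\deg(\alpha)$; by hypothesis these two integers are coprime. Any common subgroup scheme of $\ker(\nu)$ and $\ker(\varphi)$ must have order dividing both $\deg(\nu)$ and $\deg(\varphi)$, hence order $1$, so $\ker(\nu)\cap\ker(\varphi)$ is trivial and the sum is direct with
\[
  \#\bigl(\ker(\nu)\oplus\ker(\varphi)\bigr)=\deg(\nu)\deg(\varphi).
\]

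Finally, a degree computation closes the argument. One has $\deg(\alpha\nu)=\deg(\alpha)\deg(\nu)=\deg(\varphi)\deg(\nu)$, which is exactly the order of $\ker(\nu)\oplus\ker(\varphi)$ computed above. Since we already know the latter is contained in $\ker(\alpha\nu)$ and the two finite subgroup schemes of $E$ have the same order, they must coincide. The only point requiring a bit of care is the use of the group scheme formalism (to handle potential inseparability of $\nu$), which the paper has already introduced in Section~\ref{sec:background}; once the equalities $\#\ker(\nu)=\deg(\nu)$ and $\#\ker(\varphi)=\deg(\varphi)$ are taken for granted, the proof reduces to the inclusion-plus-orders argument sketched above.
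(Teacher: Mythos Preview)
Your proof is correct and follows the same line as the paper's: establish $\ker(\varphi)+\ker(\nu)\subset\ker(\alpha\nu)$, use coprimality of the degrees to see the sum is direct, and conclude by comparing orders. The only cosmetic difference is that the paper invokes separability of $\alpha$ and $\nu$ to identify $\lvert\ker(\alpha\nu)\rvert$ with $\deg(\alpha\nu)$, whereas you appeal directly to the group-scheme formalism set up in Section~\ref{sec:background}.
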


\begin{proof}
  Since $\alpha\nu\in \Hom(E_1', E)\varphi$, we have
  $\ker(\varphi)\subset\ker(\alpha\nu)$. Consequently,
  $\ker(\varphi)+\ker(\nu)\subset \ker(\alpha\nu)$. The co-primality of
  $\deg(\nu)$ and $\deg(\varphi)$ implies that the intersection of the kernels is
  trivial. Since $\alpha$ and $\nu$ are separable, so is $\alpha\nu$ and
  hence $\lvert\ker(\alpha\nu)\rvert = \deg(\alpha)\deg(\nu) = \deg(\varphi)\deg(\nu) =
  \lvert\ker(\varphi)\oplus\ker(\nu)\rvert$, which shows that the inclusion is
  in fact an equality.
\end{proof}

Proposition~\ref{prop:SumKernelEndo} tells us that if are able to compute
$\varphi, \alpha$ and $\nu$, then Algorithm~\ref{algo:isomorphism_completion}
can compute an isomorphism $E \times E \to E_1' \times E$.
Our strategy will be to first fix $\varphi$, then to compute $\alpha$ and $\nu$ that satisfy the
conditions of Proposition~\ref{prop:SumKernelEndo}.
As explained above, we specialize to the case $E=E_0$, and $p \equiv 3 \bmod 4$.
The low-discriminant quadratic order will help us find the endomorphism $\alpha \in \End(E_0)$
of prescribed degree $\deg(\alpha)=\deg(\varphi)$ by solving low-discriminant
norm equations with Cornacchia's algorithm. Algorithm~\ref{algo:localgenerator}
provides a fast method for computing such $\alpha, \nu$ upon input of the ideal
$I$ corresponding to the isogeny $\varphi$.

We start with technical lemmas which state that computing $\alpha, \nu$
is related to computing a generator of a
localization of the ideal $I$ associated to $\varphi$ at a prime $\ell$.

\begin{lemm}\label{lem:technical_x_alpha1}
Let $\mathcal O\subset \B$ be a maximal order, $I\subset \mathcal O$ be a left
ideal, $\alpha \in \mathcal O$ such that $\Nrd(\alpha)=\Nrd(I)$, and $\ell\ne p$ be a prime number. Then
  the following statements are equivalent:
  \begin{itemize}
    \item[(a)] There exists $x\in \mathcal O$, such that $\alpha x\in I$ and $\Nrd(x)$ is
  not divisible by $\ell$;
\item[(b)] There exists an invertible $y\in \mathcal O\otimes
  \Z_{\ell}$ such that $\alpha y\in I\otimes\Z_{\ell}$.
  \end{itemize}
\end{lemm}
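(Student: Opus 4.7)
}

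The direction (a) $\Rightarrow$ (b) is immediate. Given $x \in \mathcal O$ with $\ell \nmid \Nrd(x)$ and $\alpha x \in I$, I take $y := x$. Since $\ell \neq p$, the algebra $\B$ is unramified at $\ell$, so there is an isomorphism $\mathcal O \otimes \Z_\ell \cong \Mat_2(\Z_\ell)$ under which the reduced norm corresponds to the determinant. The condition $\ell \nmid \Nrd(x)$ thus means $\Nrd(x) \in \Z_\ell^\times$, so $y$ is invertible in $\mathcal O \otimes \Z_\ell$, and $\alpha y = \alpha x \in I \subseteq I \otimes \Z_\ell$.

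For the reverse direction (b) $\Rightarrow$ (a), the strategy is to approximate the local element $y$ by a global element $x \in \mathcal O$, close enough $\ell$-adically that the non-divisibility $\ell \nmid \Nrd$ transfers from $y$ to $x$. To set this up, I introduce the $\Z$-sublattice
\[
J := \{ x \in \mathcal O : \alpha x \in I \} = \mathcal O \cap \alpha^{-1} I,
\]
where the intersection is taken inside $\B$. First I verify that $J$ has full rank $4$ in $\mathcal O$: the inclusion $\overline\alpha I \subseteq J$ holds because $\overline\alpha \in \mathcal O$ gives $\overline\alpha I \subseteq \mathcal O$, while $\alpha \cdot (\overline\alpha I) = \Nrd(\alpha) I \subseteq I$, and $\overline\alpha I$ has rank $4$. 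Next, I claim
\[
J \otimes \Z_\ell \;=\; \bigl\{ z \in \mathcal O \otimes \Z_\ell : \alpha z \in I \otimes \Z_\ell \bigr\}.
\]
This follows from the flatness of $\Z_\ell$ over $\Z$ applied to the exact sequence $0 \to J \to \mathcal O \oplus \alpha^{-1} I \to \mathcal O + \alpha^{-1} I \to 0$, which shows that tensorization commutes with the intersection defining $J$. Under assumption (b), this identification places $y$ in $J \otimes \Z_\ell$.

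To conclude, I invoke $\ell$-adic density: since $J$ is a $\Z$-lattice of full rank, the natural reduction map
\[
J \;\longrightarrow\; (J \otimes \Z_\ell) \big/ \ell (J \otimes \Z_\ell) \;=\; J/\ell J
\]
is surjective, so I can find $x \in J$ with $x \equiv y \pmod{\ell (J \otimes \Z_\ell)}$, and in particular $x \equiv y \pmod{\ell(\mathcal O \otimes \Z_\ell)}$. Then $\Nrd(x) \equiv \Nrd(y) \pmod{\ell}$, and since $\Nrd(y) \in \Z_\ell^\times$ (as $y$ is invertible in $\Mat_2(\Z_\ell)$), I conclude $\ell \nmid \Nrd(x)$. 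Combined with the fact that $x \in J$ encodes $\alpha x \in I$, this proves (a). The only nontrivial step is the identification of $J \otimes \Z_\ell$ with its local incarnation, where flatness of $\Z_\ell$ is the essential ingredient; once that is in hand, the argument reduces to a one-line density statement.
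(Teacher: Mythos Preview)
Your proof is correct and takes a genuinely different route from the paper's. You package the condition $\alpha x\in I$ into the lattice $J=\mathcal O\cap\alpha^{-1}I$, identify $J\otimes\Z_\ell$ with its local avatar via flatness, and then approximate $y$ modulo $\ell$ inside $J$; this is clean and conceptual, and in fact never uses the hypothesis $\Nrd(\alpha)=\Nrd(I)$, so it proves a slightly stronger statement. The paper instead works by hand: it writes $\alpha y$ in a $\Z$-basis $b_1,\dots,b_4$ of $I$ with coefficients in $\Z_\ell$, truncates these coefficients modulo $\ell^e$ (where $e=\val_\ell(\Nrd(\alpha))$), and from the truncated element of $I$ builds $x$ explicitly as $\ell^{-e}\overline\alpha\sum z_i' b_i$, checking membership in $\mathcal O$ one localization at a time. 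Your argument is shorter and only needs precision $\ell^1$; the paper's explicit coordinate-lifting construction, however, is exactly what is implemented later in \LocalGenerator{} (Algorithm~\ref{algo:localgenerator}), so its more computational flavor is deliberate.
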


  \begin{proof}
First, we notice that all elements $x\in \mathcal O$ with reduced norm not divisible by
    $\ell$ are invertible in $\mathcal O\otimes \Z_\ell$; Indeed, $x\cdot
    (\overline x/\Nrd(x)) = 1$, hence the inverse of $x$ in $\mathcal
    O\otimes \Z_\ell$ is $\overline x/\Nrd(x)$. This proves the implication
    \emph{(a)}$\Rightarrow$\emph{(b)}.

    We now prove \emph{(b)}$\Rightarrow$\emph{(a)}. Let $y$ be as in
    \emph{(b)}. Let $b_1,\ldots, b_4$ be generators of $I$ seen as a free
    rank-$4$ $\Z$-module. Then $\alpha y=z_1\cdot b_1 + \cdots +
    z_4\cdot b_4$, with $z_1, \ldots, z_4\in \Z_\ell$. Next, pick
    integers
    $z_1',\ldots, z_4'\in \Z$ such that $z_i'\equiv
    z_i\bmod \ell^e$, where $e$ is the $\ell$-valuation of $\Nrd(\alpha)$. Then
    set $y'=\alpha^{-1}(z_1'\cdot b_1+\cdots + z_4'\cdot
    b_4)\in \B$.  We prove now
    that $x:=\Nrd(\alpha)y'/\ell^e$ satisfies the desired properties. First,
    we show that $x = \ell^{-e}\overline \alpha (z_1'\cdot b_1+\cdots + z_4'\cdot
    b_4)$ belongs to $\mathcal O$. Notice that $x$ clearly belongs to localized
    orders $\mathcal O \otimes \Z_{\ell'}$ for primes $\ell'\neq
    \ell$, so we only have to prove that $x\in\mathcal O\otimes \mathbb
    Z_\ell$. To do so, we use the fact that $z_i\equiv z_i'\bmod \ell^e$,
    hence there exists $z_1'',\ldots, z_4''\in\Z_\ell$ such that $z_i =
    z_i'+\ell^e z_i''$, which gives
    $$\begin{array}{rcl}x &=& \ell^{-e}\overline \alpha (z_1\cdot b_1+\cdots + z_4\cdot
    b_4) - \overline\alpha (z_1''\cdot b_1 +\cdots+z_4''\cdot b_4)\\
      &=& y \Nrd(\alpha)/\ell^e - \overline\alpha (z_1''\cdot b_1
      +\cdots+z_4''\cdot b_4),
      \end{array}$$
which shows that $x\in\mathcal O\otimes\Z_\ell$.
    Then we notice that
    $\Nrd(x)=(\Nrd(\alpha)/\ell^e)^2\Nrd(y')$ is not divisible by $\ell$ since
    $\Nrd(y')\equiv\Nrd(y)\not\equiv 0\bmod \ell$. Finally, since $\Nrd(\alpha)=\Nrd(I)\in I$,
    we notice that $\alpha x=\Nrd(\alpha)\ell^{-e}(z_1'\cdot b_1+\cdots + z_4'\cdot
    b_4)$ belongs to $I$
  \end{proof}

\begin{defiprop}\label{defiprop:ltype}
  Let $M\in \Mat_2(\Z_{\ell})$ be a matrix. We say that the
  $\ell$-type of $M$ is the pair of valuations (in $\N^2$) of the invariant
  factors of $M$, sorted in non-decreasing order. More
  explicitly, using the \emph{Smith Normal Form}, this means that $M$ has $\ell$-type $(d_1,d_2)$ if $d_1\leq d_2$
  and if there exist invertible matrices $S, T\in \GL_2(\Z_{\ell})$ such
  that 
  $$S\cdot M\cdot T = \begin{pmatrix}\ell^{d_1}&0\\0&\ell^{d_2}\end{pmatrix}.$$
 
    Since
  $\Mat_2(\Z_{\ell})$ is left-principal, we define the
  $\ell$-type of a left-ideal $I\subset \Mat_2(\Z_{\ell})$ as the $\ell$-type of a generator. 

  Let $I\subset \mathcal O$ be a left-ideal of a maximal order in $\B$, and
  $\ell\ne p$ be a prime number. By~\cite[Cor.~I.2.4]{QuatVigneras}, $\mathcal O\otimes \Z_{\ell}$ is isomorphic to $\Mat_2(\mathbb
  Z_{\ell})$, so we define the $\ell$-type of $I$ as the $\ell$-type of $I\otimes
  \Z_{\ell}$ regarded as an ideal in $\Mat_2(\Z_{\ell})$; this
  definition does not depend on the choice of the isomorphism $\mathcal
  O\otimes \Z_{\ell}\cong \Mat_2(\Z_{\ell})$.
  If $\alpha\in \mathcal O$ is an element in a maximal order, its $\ell$-type
  is defined as the $\ell$-type of the left-ideal $\mathcal O\alpha$.
\end{defiprop}

\begin{proof}
  The only thing that we need to prove is that the definition of the
  $\ell$-type of a left ideal $I\subset\mathcal
  O\otimes\Z_\ell\cong\Mat_2(\Z_\ell)$ does not depend on the choice of the
  isomorphism. In fact, this is a consequence of the fact that automorphisms of $\Mat_2(\mathbb
  Z_{\ell})$ preserve the $\ell$-type of matrices in $\Mat_2(\Z_\ell)$, which
  can be seen on the
  Smith Normal Form since automorphisms act as conjugations by invertible
  matrices.
\end{proof}

\begin{lemm}\label{lem:technical_x_alpha2}
  With the same notation as in Lemma~\ref{lem:technical_x_alpha1}, the
  $\ell$-types of $I$ and $\alpha$ are the same if and only if there exists an
  invertible $y\in \mathcal O\otimes \Z_{\ell}$ such that $\alpha y\in
  I\otimes\mathbb Z_{\ell}$.
\end{lemm}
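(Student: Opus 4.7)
The plan is to reduce to $\Mat_2(\Z_\ell)$ via the isomorphism $\mathcal O\otimes\Z_\ell\cong\Mat_2(\Z_\ell)$ used to define $\ell$-types, and then translate the statement into a question about Smith Normal Forms of matrices. Let $A_\alpha$ be the image of $\alpha$ under this isomorphism, and let $A_I\in \Mat_2(\Z_\ell)$ be a generator of the (principal) left-ideal $I\otimes\Z_\ell$. By Definition-Proposition~\ref{defiprop:ltype}, the $\ell$-types of $\alpha$ and $I$ are respectively the pairs of invariant factor valuations of $A_\alpha$ and $A_I$. The hypothesis $\Nrd(\alpha)=\Nrd(I)$ implies that $\val_\ell(\det A_\alpha)=\val_\ell(\det A_I)$, since for a principal left ideal $\Mat_2(\Z_\ell)\cdot A$ the reduced norm equals $\det(A)$ up to a $\Z_\ell^\times$ factor.

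For the forward direction, I would write Smith Normal Forms $A_\alpha=S_\alpha D T_\alpha$ and $A_I=S_I D T_I$ with the same diagonal $D=\diag(\ell^{d_1},\ell^{d_2})$, because the $\ell$-types coincide. Then setting $A_y:=T_\alpha^{-1}T_I\in\GL_2(\Z_\ell)$ yields $A_\alpha A_y=S_\alpha S_I^{-1}A_I\in\Mat_2(\Z_\ell)\cdot A_I$, and pulling back through the isomorphism gives an invertible $y\in\mathcal O\otimes\Z_\ell$ with $\alpha y\in I\otimes\Z_\ell$.

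For the reverse direction, suppose $\alpha y\in I\otimes\Z_\ell$ with $y\in(\mathcal O\otimes\Z_\ell)^\times$. Translating to matrices, $A_\alpha A_y = M A_I$ for some $M\in\Mat_2(\Z_\ell)$ and $A_y\in\GL_2(\Z_\ell)$. Taking determinants and using $\val_\ell(\det A_\alpha)=\val_\ell(\det A_I)$ together with $\det(A_y)\in\Z_\ell^\times$, I obtain $\val_\ell(\det M)=0$, so that $M\in\GL_2(\Z_\ell)$. Hence $A_\alpha=M A_I A_y^{-1}$ with both $M$ and $A_y^{-1}$ invertible, so $A_\alpha$ and $A_I$ share the same Smith Normal Form, which is precisely the equality of $\ell$-types.

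The only mild subtlety will be keeping straight that the reduced norm of an element (or a locally principal ideal) corresponds, up to a unit in $\Z_\ell$, to the determinant of its matrix image; once that is observed the rest is standard Smith Normal Form manipulation, so no step should be a serious obstacle.
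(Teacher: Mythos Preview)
Your proof is correct and follows essentially the same approach as the paper: both pass to $\Mat_2(\Z_\ell)$, use the Smith Normal Form to produce the invertible $y$ in the forward direction, and in the reverse direction use the determinant together with $\Nrd(\alpha)=\Nrd(I)$ to force the cofactor matrix into $\GL_2(\Z_\ell)$. Your write-up is slightly more explicit about invoking $\Nrd(\alpha)=\Nrd(I)$ in the reverse implication, which the paper leaves implicit in the phrase ``by the multiplicativity of the determinant''.
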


\begin{proof}
  In this proof, we fix an isomorphism $\mathcal O\otimes \Z_\ell\cong
  \Mat_2(\Z_\ell)$ and we use it implicitly. Let $\beta\in
  \Mat_2(\Z_\ell)$ be a generator of $I\otimes \Z_\ell$. 

  To prove the ``only if'' part, we notice that if
  $\alpha$ and $\beta$ have the same invariant factors then there exist invertible
  matrices $U, V\in\GL_2(\Z_\ell)$ such that $U\cdot \beta= \alpha\cdot V$. 
  This implies that
  $\alpha\cdot V$ belongs to the left-ideal generated by $\beta$. Writing
  $y\in(\mathcal O\otimes\Z_\ell)^\times$ for the element corresponding to
  $V\in\GL_2(\Z_\ell)$, we obtain $\alpha y\in I\otimes \Z_\ell$.

  We now prove the ``if'' part of the statement. Under the fixed isomorphism, the
  assumption $\alpha y\in I\otimes\Z_\ell$ translates to the existence of
  $U\in\Mat_2(\Z_\ell)$ such that $\alpha y = U\beta$. By the multiplicativity
  of the determinant, we deduce that $U\in\GL_2(\Z_\ell)$. Therefore $\beta =
  U^{-1}\alpha y$ and hence $\beta$ and $\alpha$ have the same invariant
  factors.
\end{proof}

\begin{lemm}\label{lem:ltype_divl}
  With the same notation as in Lemma~\ref{lem:technical_x_alpha1}, if $\alpha\in\mathcal O$ has $\ell$-type $(i, j)$, then there exists
  $\alpha'\in\mathcal O$ with $\ell$-type $(0, j-i)$ such that $\alpha =
  \ell^i\,\alpha'$.
  Similarly, if $I\subset \mathcal O$ is a left-ideal which has $\ell$-type
  $(i,j)$, there exists a left-ideal $I'\subset\mathcal O$ with $\ell$-type
  $(0, j-i)$ and $I = \ell^i\cdot I'$.
\end{lemm}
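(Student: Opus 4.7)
My plan is to reduce everything to a local computation at the prime $\ell$ and to invoke the Smith Normal Form. The key ingredient is the local-global identity for maximal orders: a $\Z$-submodule $L\subset\B$ of full rank lies in $\mathcal O$ if and only if $L\otimes\Z_{\ell'}\subset\mathcal O\otimes\Z_{\ell'}$ at every prime $\ell'$. For each prime $\ell'\ne\ell$ the scalar $\ell^i$ is a unit in $\Z_{\ell'}$, so dividing by $\ell^i$ is harmless away from $\ell$, and the whole problem concentrates at $\ell$.

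For the element statement, I would set $\alpha':=\alpha/\ell^i\in\B$ and verify $\alpha'\in\mathcal O$. Fixing an isomorphism $\mathcal O\otimes\Z_\ell\cong\Mat_2(\Z_\ell)$, the hypothesis that $\alpha$ has $\ell$-type $(i,j)$ gives matrices $S,T\in\GL_2(\Z_\ell)$ with $S\alpha T=\diag(\ell^i,\ell^j)$. Hence $\alpha=S^{-1}\diag(\ell^i,\ell^j)T^{-1}$, and since $i\leq j$ every entry of $\alpha$ is a $\Z_\ell$-linear combination of $\ell^i$ and $\ell^j$, so is divisible by $\ell^i$. It follows that $\alpha/\ell^i\in\Mat_2(\Z_\ell)$, and moreover $S(\alpha/\ell^i)T=\diag(1,\ell^{j-i})$, which gives the desired $\ell$-type $(0,j-i)$.

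For the ideal statement, I would set $I':=\ell^{-i}I\subset\B$. Picking a generator $\beta$ of $I\otimes\Z_\ell$ whose Smith Normal Form is $\diag(\ell^i,\ell^j)$, the same computation shows $\ell^{-i}\beta\in\Mat_2(\Z_\ell)$, and this element generates $I'\otimes\Z_\ell$ with $\ell$-type $(0,j-i)$. Since $\ell^{-i}$ is central in $\B$, left-multiplication by $\mathcal O$ stabilizes $I'$, so $I'$ is a left-ideal of $\mathcal O$ satisfying $I=\ell^i I'$. I do not expect any serious obstacle: the argument is essentially the observation that the first invariant factor of a matrix equals the $\ell$-adic gcd of its entries, packaged via the local-global principle to transport information between $\mathcal O$ and $\Mat_2(\Z_\ell)$.
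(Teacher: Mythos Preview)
Your proof is correct and follows the same Smith Normal Form argument as the paper: write $S\alpha T=\diag(\ell^i,\ell^j)$, divide through by $\ell^i$, and read off the new invariant factors. The paper's own proof is terser---it simply says ``we just need to prove this property for matrices in $\Mat_2(\Z_\ell)$'' and performs the one-line matrix computation---whereas you spell out the local-global justification (that $\ell^i$ is a unit in $\Z_{\ell'}$ for $\ell'\ne\ell$, so membership of $\alpha/\ell^i$ in $\mathcal O$ is decided entirely at $\ell$), which is the content the paper leaves implicit.
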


\begin{proof}
  Using Definition-Proposition~\ref{defiprop:ltype}, we just need to prove this property for matrices
  in $\Mat_2(\Z_\ell)$. Let $M\in\Mat_2(\Z_\ell)$ be a matrix with $\ell$-type
  $(i,j)$, i.e., there exists $S,T\in\GL_2(\Z_\ell)$ such that 
  $$S\cdot M\cdot T = \begin{pmatrix}\ell^{i}&0\\0&\ell^{j}\end{pmatrix}.$$
    Then set $M' = S^{-1}\cdot
    \begin{pmatrix}1&0\\0&\ell^{j-i}\end{pmatrix}\cdot T^{-1}$, which has $\ell$-type $(0, j-i)$ and $M=\ell^i M'$.
\end{proof}

We are now ready to prove the main algorithmic result of this section.
It will rely on the fact that we can efficiently solve norm equations
in $\mO[0]$, if we assume that the norm to reach is big enough.

%To formalize
%that, we introduce the constant $c_0>0$ given by~\cite[Corollary 5.8]{WesoFocs21}.

\begin{algorithm}  
  \caption{\LocalGenerator{}\label{algo:localgenerator}}
  \INPUT{
    A prime $\ell$, a left ideal $I\subset \mathcal O_0$ not divisible by $\ell$,
    with reduced norm $\Nrd(I)=\ell^m$.
  }
  \OUTPUT{Returns elements $\alpha, x\in \mathcal O_0$ such that
    $\Nrd(\alpha)=\ell^m$ and $\alpha x$ generates $I\otimes \Z_\ell$.}
  Check that $m \log(\ell) > \log(p)^{c}$ and
  compute $\alpha \in \mO[0]$ such that $\Nrd( \alpha)=\ell^m$ \; \label{step:alpha}
  \tcp{Use \cite[Corollary 5.8]{WesoFocs21}; the constant $c$ is the one
    of this reference.}
  Using the isomorphism $\phi:\mathcal O_0\otimes\Z_\ell\to \Mat_2(\Z_\ell)$,
  compute $M_\alpha := \phi(\alpha_0 + \alpha_1 i +\alpha_2 j +\alpha_3 i
  j)\bmod \ell^m \in \Mat_2(\Z/\ell^m\Z)$\;
  Compute a generator $M_\beta \in \Mat_2(\Z/\ell^m\Z)$ of $\phi \left(I\otimes\Z/\ell^m\Z \right)$\;
  \tcp{This is done by computing the right-gcds of the
    four generators of $I\otimes \Z/\ell^m\Z\cong \Mat_2(\Z/\ell^m\Z)$,
    see Proposition~\ref{prop:rgcd_mat_zl}}
  Using the Smith Normal Form, compute two matrices $S, T\in\GL_2(\Z/\ell^m\Z)$ such that $S\cdot
  M_\alpha\cdot T = M_\beta$\;
  By lifting coordinates from $\Z/\ell^m\Z$ to representatives in
  $\Z$, set $x$ an element of $\mathcal O$ such that $x \equiv
  \phi^{-1}(T)\bmod \Z/\ell^m\Z$\;
  Return $(\alpha, x)\in \mathcal O_0^2$\;
\end{algorithm}

\begin{theo}\label{th:ComplextityAlgo1}

    Assuming GRH, there exists a constant $c>0$ such that on input an
    ideal $I$ with $\log(\Nrd(I)) > \log(p)^{c}$, 
    Algorithm~\ref{algo:localgenerator} is correct and terminates in
    expected polynomial time in $\log(p)$ and $\log(\Nrd(I))$.
  
\end{theo}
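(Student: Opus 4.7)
The plan is to organize the argument around Lemma~\ref{lem:technical_x_alpha2}: if I can exhibit an element $\alpha\in\mathcal O_0$ whose $\ell$-type agrees with that of $I$, then an invertible $y\in\mathcal O_0\otimes\Z_\ell$ with $\alpha y\in I\otimes\Z_\ell$ is guaranteed to exist, and computing it is purely a linear algebra task over $\Z/\ell^m\Z$. First I would justify Step~\ref{step:alpha}: under GRH and the size hypothesis $m\log(\ell)>\log(p)^c$, Wesolowski's variant of KLPT (\cite[Cor.~5.8]{WesoFocs21}) returns an $\alpha\in\mathcal O_0$ with $\Nrd(\alpha)=\ell^m$ in expected polynomial time. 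Since $I$ is not divisible by $\ell$ by hypothesis and $\Nrd(I)=\ell^m$, $I$ has $\ell$-type $(0,m)$; and as $\Nrd(\alpha)=\ell^m$, the element $\alpha$ has $\ell$-type $(i,m-i)$ for some $i$, with $i=0$ exactly when $\alpha\notin\ell\mathcal O_0$. A minor but necessary subtlety is that Wesolowski's output could a priori be divisible by $\ell$; I would argue that this is an exceptional event of heuristic probability $O(\ell^{-4})$ and resampling (or, equivalently, dividing out by $\ell^i$ via Lemma~\ref{lem:ltype_divl} and re-requesting a norm of $\ell^{m-2i}$) preserves the expected polynomial runtime.

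Once both $\alpha$ and $I$ have $\ell$-type $(0,m)$, I would next verify the rest of the algorithm. The key observation is that $\ell^m(\mathcal O_0\otimes\Z_\ell)\subset I\otimes\Z_\ell$ (directly from the normal form of an $\ell$-type $(0,m)$ ideal in $\Mat_2(\Z_\ell)$), so the ideal $I\otimes\Z_\ell$ is entirely determined by its image modulo $\ell^m$, and likewise generators are determined modulo $\ell^m$ up to right multiplication by units. Proposition~\ref{prop:rgcd_mat_zl}, applied iteratively to the four matrices $\phi(b_k)\bmod \ell^m$ arising from a $\Z$-basis of $I$, computes a generator $\beta$ of $I\otimes\Z/\ell^m\Z$. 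Because $M_\alpha$ and $M_\beta$ have the same invariant factors over $\Z/\ell^m\Z$ (both $\ell$-type $(0,m)$), the Smith Normal Form yields $S,T\in\GL_2(\Z/\ell^m\Z)$ with $SM_\alpha T=M_\beta$, and lifting $T$ to $x\in\mathcal O_0$ gives $\alpha x\equiv \phi^{-1}(M_\alpha T)=\phi^{-1}(S^{-1}M_\beta)\pmod{\ell^m}$, which lies in the left ideal generated by $\beta$. Since $M_\alpha T$ has the same $\ell$-type as $M_\beta$ (as $T$ is invertible), $\alpha x$ generates $I\otimes\Z_\ell$, proving correctness.

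For the complexity, the only nontrivial cost is Step~\ref{step:alpha}, whose expected polynomial runtime in $\log(p)$ and $m\log(\ell)=\log(\Nrd(I))$ is exactly the content of Wesolowski's theorem under GRH. All remaining steps involve only arithmetic modulo $\ell^m$: the isomorphism $\phi$ can be precomputed once (as an embedding of $\mathcal O_0$ into $\Mat_2(\Z_\ell)$ modulo $\ell^m$), the right-gcd via Proposition~\ref{prop:rgcd_mat_zl} reduces to $O(\log m)$ applications of valuation and Hermite reduction, and the Smith Normal Form of a $2\times 2$ matrix over $\Z/\ell^m\Z$ is trivially polynomial in $m\log(\ell)$. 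I expect the main technical obstacle to be the cleanly handled but distracting requirement that $\alpha$ not be divisible by $\ell$, and giving a clean self-contained argument rather than a heuristic one; I would probably resolve this either by adjusting the target norm on retry or by invoking (if available in the cited literature) a version of Wesolowski's algorithm that can enforce primitivity of the output modulo $\ell$.
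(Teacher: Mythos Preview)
Your proposal is correct and follows essentially the same approach as the paper's proof: invoke \cite[Cor.~5.8]{WesoFocs21} for Step~\ref{step:alpha}, use the $\ell$-type machinery (Lemmas~\ref{lem:technical_x_alpha2} and~\ref{lem:ltype_divl}) to justify the Smith Normal Form step, and bound the remaining linear-algebra costs over $\Z/\ell^m\Z$ via Hermite/Smith normal forms. You are in fact more careful than the paper in flagging the possibility that the $\alpha$ returned by Wesolowski's algorithm is divisible by $\ell$; the paper simply cites Lemma~\ref{lem:ltype_divl} at that point without spelling out the resampling/division-by-$\ell^i$ fix you sketch.
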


\begin{proof}

    Let $c$ be the constant given by~\cite[Corollary 5.8]{WesoFocs21}, so
    that the corresponding algorithm can be run on input $\Nrd(I)$.
    In~\cite{WesoFocs21}, the algorithm is proved to be correct and polynomial time
    under GRH. Therefore $\alpha$ can be correctly computed at
    Step~\ref{step:alpha}.
  Hence the correctness of Algorithm~\ref{algo:localgenerator} is a
  direct consequence of
  Lemma~\ref{lem:technical_x_alpha2} (to prove that the $\ell$-types of
  $I$ and $\alpha$ are the same) and of
  the proof of Lemma~\ref{lem:ltype_divl} which explains how to
  construct $x$.
  The complexity is a consequence of the polynomial time computation of $\alpha$, 
  and of the fact that computing the Hermite Normal Forms (for computing the generator of
  $I\otimes \Z/\ell^m\Z$) and Smith Normal Forms of matrices
  in $\Mat_2(\Z/\ell^m\Z)$ can be done in quasi-linear
  complexity $\widetilde
  O(m\log(\ell))$~\cite[Chap.~8]{storjohann2000algorithms}.
\end{proof}

Finally, we put all the pieces together and we give a complete algorithm
(Algorithm~\ref{algo:isom1}) to
compute an isomorphism $E_0^2\to E_1'\times E_0$ upon input of $E_1'$ and its
endomorphism ring. In the following, $c_0$ is the constant
in Theorem~\ref{th:ComplextityAlgo1}.

\begin{algorithm}  
  \caption{\textsc{LowDiscriminantIsomorphism}\label{algo:isom1}}
    \INPUT{A maximal elliptic curve $E_1'$ defined over
    $\F_{p^2}$, a maximal order $\mO[1]'$ together with an isomorphism
    $\mO[1]'\cong \End(E_1')$, a prime $\ell\ne p$.}
    \OUTPUT{An efficient representation of an isomorphism $E_0 ^2
    \to E_1' \times E_0$.}
    Compute a left $\mO[0]$-ideal $I_{11}$ with norm $\ell^m$ for
    some $m\in\N$ such that its right-order is conjugated to $\mO[1]'$
    and that $m\log(\ell) > \log(p)^{c_0}$\;
    \tcp{use KLPT~\cite{KLPT} or \cite[Algo.~5]{WesoFocs21}}
    Set $\alpha, \nu_1 := \LocalGenerator{}_{c_0}(\ell,I_{11})$\;
    Return \IsomorphismCompletion($E_0, E_1', E_0, E_0, \mO[0], \mO[1],
    I_{11}, \mO[0]\nu_1$)\;
\end{algorithm}

\begin{theo}\label{th:isom1}
  Algorithm~\ref{algo:isom1} is correct and, under GRH, it requires an expected number
  of operations which is polynomial in $\log(p)$.
\end{theo}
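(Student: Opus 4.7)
The plan is to verify that after the first two steps of Algorithm~\ref{algo:isom1} we are in a position to invoke Proposition~\ref{prop:isomcompl_correct} on \IsomorphismCompletion{}, which handles the rest. Concretely, $I_{11}$ (coming from KLPT/\cite[Algo.~5]{WesoFocs21}) corresponds under the Deuring correspondence to an isogeny $\varphi_{11}:E_0\to E_1'$ of degree $\ell^m$, and the call to \LocalGenerator{} returns $\alpha,\nu_1\in\mO[0]$ with $\Nrd(\alpha)=\ell^m$ and such that $\alpha\nu_1$ generates $I_{11}\otimes\Z_\ell$. The principal left ideal $\mO[0]\nu_1$ corresponds to an endomorphism of $E_0$, which will play the role of the second isogeny of coprime degree needed by \IsomorphismCompletion{}.

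The first verification is that $\Nrd(\nu_1)$ is coprime to $\ell^m=\Nrd(I_{11})$. This follows from Lemma~\ref{lem:technical_x_alpha1}: the element $\nu_1$ is precisely the witness $x$ of statement (a) there, and its norm is not divisible by $\ell$. The second verification is that $\alpha\nu_1$ lies in $I_{11}$ globally, not merely in $I_{11}\otimes\Z_\ell$. For this, note that $\alpha\nu_1\in\mO[0]$ by construction and $\alpha\nu_1\in I_{11}\otimes\Z_\ell$ by the specification of \LocalGenerator{}, while for any prime $\ell'\neq\ell$ the ideal $I_{11}\otimes\Z_{\ell'}$ coincides with $\mO[0]\otimes\Z_{\ell'}$ since $\ell'\nmid\Nrd(I_{11})$, so $\alpha\nu_1$ lies in $I_{11}\otimes\Z_{\ell'}$ trivially; gathering local information via $I_{11}=\bigcap_{\ell'}(I_{11}\otimes\Z_{\ell'})\cap\mO[0]$ gives $\alpha\nu_1\in I_{11}$.

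With these two facts in hand, Proposition~\ref{prop:SumKernelEndo} applies with $E=E_0$, $\varphi=\varphi_{11}$, and $\nu=\nu_1$: the hypotheses are $\deg(\alpha)=\deg(\varphi_{11})=\ell^m$, $\gcd(\deg(\alpha),\deg(\nu_1))=1$, and $\alpha\nu_1\in\Hom(E_1',E_0)\varphi_{11}=I_{11}$. The conclusion is that $\ker(\varphi_{11})\oplus\ker(\nu_1)$ is the kernel of $\alpha\nu_1\in\End(E_0)$; in particular, the quotient $E_0/(\ker(\varphi_{11})\oplus\ker(\nu_1))$ is isomorphic to $E_0$ itself. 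Consequently the inputs $(E_0,E_1',E_0,E_0,\mO[0],\mO[0],I_{11},\mO[0]\nu_1)$ satisfy the preconditions of \IsomorphismCompletion{}, and Proposition~\ref{prop:isomcompl_correct} yields the desired isomorphism $E_0^2\to E_1'\times E_0$, proving correctness.

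For the complexity claim, the work consists of three polynomial-time pieces under GRH: producing $I_{11}$ with norm $\ell^m$ satisfying $m\log\ell>\log(p)^{c_0}$ via Wesolowski's variant of KLPT (which permits taking $m$ polynomially bounded in $\log p$), running \LocalGenerator{} whose complexity is quasi-linear in $m\log\ell$ and polynomial in $\log p$ by Theorem~\ref{th:ComplextityAlgo1}, and finally invoking \IsomorphismCompletion{} whose polynomial-time guarantee is Proposition~\ref{prop:isomcompl_correct}. The main subtlety I foresee is the global versus local membership argument for $\alpha\nu_1\in I_{11}$; once that is cleanly established using the localization viewpoint of Section~\ref{section:adic}, the rest is an assembly of previously proved results.
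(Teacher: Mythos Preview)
Your proof is correct and follows the same approach as the paper's own proof, which simply cites Theorem~\ref{theo:isom_iff}, Proposition~\ref{prop:SumKernelEndo}, Theorem~\ref{th:ComplextityAlgo1}, and Proposition~\ref{prop:isomcompl_correct} without further elaboration. You have usefully unpacked the two verifications (that $\ell\nmid\Nrd(\nu_1)$ and that $\alpha\nu_1\in I_{11}$ globally) which are indeed the points one must check to feed Proposition~\ref{prop:SumKernelEndo} and then \IsomorphismCompletion{}; the local--global argument for the second point is the right one.
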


\begin{proof}
  The correctness is a consequence of
  Theorem~\ref{theo:isom_iff} and Proposition~\ref{prop:SumKernelEndo}, where Lemma~\ref{lem:technical_x_alpha1} ensures that the output of \LocalGenerator{} is valid data for the input of \IsomorphismCompletion{}.The correctness of the
  subroutines are given in Theorem~\ref{th:ComplextityAlgo1} and
  Proposition~\ref{prop:isomcompl_correct}. 
  The complexity follows from the complexities of the subroutines (Theorem~\ref{th:ComplextityAlgo1} and
  Proposition~\ref{prop:isomcompl_correct}), together with the fact that the
  output size of Wesolowski's algorithm~\cite[Algo.~5]{WesoFocs21} is
  polynomial in $\log(p)$.
\end{proof}

The following corollary shows that by running twice Algorithm~\ref{algo:isom1},
we can compute isomorphisms $E_1\times E_0\to E_2\times E_0$.

\begin{coro}\label{coro:thisom1}
  Let $E_1,E_2$ be maximal elliptic curves defined over $\F_{p^2}$, with
  known endomorphism rings $\mOi \cong \End(E_1)$ and
  $\mOd \cong \End(E_2)$. Assuming GRH we can compute an efficient representation of an $\F_{p^2}$-isomorphism from
  $E_1 \times E_0$ to $E_2 \times E_0$ with a Las Vegas probabilistic algorithm
  running in expected polynomial time.
\end{coro}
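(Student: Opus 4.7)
The plan is to invoke Algorithm~\ref{algo:isom1} twice and glue the two outputs together using the transposition construction of Section~\ref{sec:transposing_isog}. More precisely, first I would apply \textsc{LowDiscriminantIsomorphism} with input $(E_1,\mO[1])$ to produce an efficient representation of an $\F_{p^2}$-isomorphism $\phi_1 : E_0^2 \to E_1 \times E_0$, and then apply it again with input $(E_2,\mO[2])$ to produce an $\F_{p^2}$-isomorphism $\phi_2 : E_0^2 \to E_2 \times E_0$. By Theorem~\ref{th:isom1}, both calls succeed in expected polynomial time in $\log(p)$ under GRH and yield efficient representations of the corresponding $2\times 2$ matrices of isogenies.

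Next, I would apply Corollary~\ref{coro:pseudodual} to $\phi_1$ to get the transposed morphism $\widetilde{\phi_1} : E_1\times E_0 \to E_0^2$, obtained by dualizing each entry of the matrix representation and swapping the off-diagonal entries. Since $\deg(\phi_1)=1$, Corollary~\ref{coro:pseudodual} implies $\deg(\widetilde{\phi_1})=1$, so $\widetilde{\phi_1}$ is itself an $\F_{p^2}$-isomorphism. Efficient representations of the entries of $\widetilde{\phi_1}$ are obtained for free by taking duals of the entries of $\phi_1$.

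I would then output the composition $\phi_2 \circ \widetilde{\phi_1} : E_1 \times E_0 \to E_0^2 \to E_2 \times E_0$. Because the degree of a composition is multiplicative, this composition has degree $1$ and hence is an isomorphism. The matrix of isogenies representing the composition is obtained from the two input matrices via the matrix-product formula stated just after Proposition~\ref{prop:isomorphism_suff_cond}, and efficient representations of its entries can be computed in polynomial time from those of the entries of $\phi_2$ and $\widetilde{\phi_1}$.

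The overall cost is dominated by the two calls to \textsc{LowDiscriminantIsomorphism}, which are polynomial under GRH by Theorem~\ref{th:isom1}; transposition and composition add only polynomial overhead. There is no genuine obstacle here: the only subtle point is noting that the ``transpose'' of a degree-$1$ map is again degree $1$, but this is immediate from Corollary~\ref{coro:pseudodual}, so $\widetilde{\phi_1}$ serves as a legitimate inverse-like bridge from $E_1\times E_0$ back to $E_0^2$ without requiring any extra computation.
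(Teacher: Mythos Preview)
Your proposal is correct and follows essentially the same approach as the paper: run Algorithm~\ref{algo:isom1} twice, transpose one of the resulting isomorphisms via Corollary~\ref{coro:pseudodual}, and compose. The only cosmetic difference is that the paper transposes the map landing in $E_2\times E_0$ rather than the one landing in $E_1\times E_0$; your choice in fact yields the direction $E_1\times E_0\to E_2\times E_0$ exactly as stated in the corollary.
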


\begin{proof}
Running twice Algorithm~\ref{algo:isom1} provides us with efficient
  representations of two isomorphisms
  $\xi_1 : E_0^2 \to E_1\times E_0$ and $\xi_2: E_0^2\to E_2\times E_0$. By
  transposing $\xi_2$ as in Section~\ref{sec:transposing_isog}, we get an isomorphism $\xi_2': E_2\times E_0\to E_0^2$.
  Finally, computing a efficient representation of $\xi_1\circ \xi_2'$ provides
  the desired isomorphism.
\end{proof}

\begin{remq}\label{remq:generalizationE0}
  In fact, all results in this section generalize if we replace $E_0$ by an
  elliptic curve $E$ for which we know a subring of $\End(E)$ isomorphic to a
  low-discriminant imaginary quadratic order.
  Corollary~\ref{coro:thisom1} can actually be generalized as follows: given $E_1,
  E_2, E_1', E_2'$ supersingular elliptic curves defined over $\F_{p^2}$ with
  their endomorphism rings, and given low-discriminant endomorphisms in
  $\End(E_1)$ and $\End(E_1')$, we can efficiently compute an isomorphism $E_1\times E_2\to
  E_1'\times E_2'$ by computing isomorphisms $E_1\times E_2\to E_1^2$,
  $E_1^2\to E_1\times E_1'$, $E_1\times E_1'\to (E_1')^2$, and $(E_1')^2\to
  E_1'\times E_2'$. Each isomorphism can be computed by using the
  low-discriminant technique described in this section.
\end{remq}

\subsubsection{Experiments}\label{para:expe2}

Let us describe an implementation of instances of Algorithm~\ref{algo:isom1} we propose in the file
\textsf{ExperimentResults\_part2.mgm} available at~\cite{artifact}.
As in Section~\ref{para:expe1}, the
user can choose the parameters $p,\ell$ and $m$. Then we build a random ideal $I_{11}$
such that $\Nrd(I_{11})=\ell ^ m$. We recover $\alpha$ and $\nu_1$ with the
function $\textsf{LocalGenerator}$, where we compute $\alpha$ with to Cornacchia's algorithm.
Those computations allow us to set $I_{21}:= \mO[0] \nu_1$.
Finally, as described in Algorithm~\ref{algo:isom1}, we recover the two last ideals
$I_{12}$ and $I_{22}$ by a call to \textsf{IsomorphismCompletion}. The function
\textsf{RepresentIsomorphism} ensures that the computed ideals represent an
isomorphism.

\begin{expl}
  We report on experimental results obtained by running \texttt{Magma}
  on the file \textsf{ExperimentResults\_part2.mgm} with seed $12345$.
  In this example, $p=503$, $\ell=3$,
  $m=7$. We denote by $1, i, j, k$ the usual generators of $\B$.
  The input ideal $I_{11}$ has norm $\ell^m$ and is given by the basis:
  $ \langle 2187, 2187\, i, 2863/2 + 1551\, i + k/2, 1551 + 1511\, i/2 + j /2 \rangle$.
  Computations give
  $\alpha=30 + 28\, i - j$, and $\nu_1=136567 - 12039/2 \, i + 5127 \, j/2 - 2160 \, k$.
  We set $I_{21}:=\mO[0]\nu_1$. We finally check that \textsf{RepresentIsomorphism} returns \textsc{True} on the output of \textsf{IsomorphismCompletion}.

  With the same parameters $p, \ell$ and $m$,
  and averaging 100 unseeded runs of \textsf{ExperimentResults\_part2.mgm}, we obtain a mean execution time of $1.125$ seconds.
  The memory usage remains constant at $32.09$MB.
\end{expl}

\subsection{Computing an isomorphism in the general case}\label{sc:generalcase}

% In this section we conclude the proof of Theorem~\ref{theo:main2}.
% The authors would like to thank Benjamin Wesolowski, who quickly and clearly
% shows us that the tools developed above were enough to provide a
% complete proof of \ref{theo:main2}.

For simplicity, we focus on the case $p\equiv 3 \bmod 4$ where we can use the
curve
$E_0$ defined over $\F_{p^2}$ by the equation $y^2 = x^3 + x$ which admits
a low-discriminant endomorphism. If
$p\equiv 1\bmod 4$, we can use instead the construction described
in~\cite[Sec.~3.1]{Deuring4People}.

First we provide an algorithm to solve a special case of Theorem~\ref{theo:main2}.

\begin{algorithm}  
  \caption{\IsomorphismEO{}\label{algo:IsomorphismEO}}
    \INPUT{Two maximal elliptic curves $E_1,E_2$ defined over
    $\F_{p^2}$, maximal orders $\mO[1], \mO[2]$ together with isomorphisms
    $\mO[1]\cong \End(E_1), \, \mO[2]\cong \End(E_2)$}
    \OUTPUT{An efficient representation of an isomorphism $E_0^2
    \to E_1 \times E_2$.}
  Compute isogenies $\varphi_1: E_0 \to E_1$ and $\varphi_2: E_0 \to E_2$ 
  of coprime degrees, and their ideals $I_{1},I_{2}$;
  \tcp{use KLPT~\cite{KLPT} or \cite[Algo.~5]{WesoFocs21}}
  Set $I_K:=\deg(\varphi_1)I_2 + \deg(\varphi_2)I_1$, and compute its right order $\mO[3]$
along with an elliptic curve $E_3$ such that $\End(E_3) \cong \mO[3]$ \;
  \tcp{Lemma~\ref{lemm:SumIdealCoprime} ensures $E_3 \simeq E_0/(\ker(\varphi_1) \oplus \ker(\varphi_2))$}

  Compute an isomorphism $F: E_0 \times E_3 \to E_1 \times E_2$ by running
  \IsomorphismCompletion($E_0,E_3,E_1,E_2,\mO[0],\mO[3],I_1,I_2$) \;

  Choose a prime $\ell \neq p$, and compute an isomorphism $G:E_0^2 \to E_0 \times E_3$ by running
  \textsc{LowDiscriminantIsomorphism}($E_3,\mO[3],\ell$), and post-composing by the isomorphism $(P,Q) \mapsto (Q,P)$ \;

  Return the composition $F \circ G: E_0^2 \to E_1 \times E_2$\;
\end{algorithm}

\begin{algorithm}
  \caption{\ProductSurfacesIsomorphism{}}\label{algo:ProductSurfacesIsomorphism}
    \INPUT{Four maximal curves $E_1, E_1', E_2, E_2'$ over $\F_{p^2}$; $\Z$-bases of maximal orders $\mO[i],
     \mO[i]' \subset \B$ such that $\mO[i] \simeq \End(E_i)$ and $\mO[i]' \simeq \End(E_i')$ for $i \in \{1, 2 \}$.
  }
    \OUTPUT{An efficient representation of a $2\times
    2$ matrix of isogenies ($\varphi_{ij}$) representing an isomorphism
    $E_1\times E_2\to
    E_1'\times E_2'$}
  Compute two isomorphisms $\Phi : E_0^2  \to E_1 \times E_2$ and $\Psi: E_0^2 \to E_1' \times E_2'$, by running respectively
  $\IsomorphismEO(E_1,E_2,\mO[1],\mO[2])$ and $\IsomorphismEO(E_1',E_2',\mO[1]',\mO[2]')$ \;

  Compute an effective representation of an isomorphism $\widetilde{\Phi}: E_1 \times E_2 \to E_0^2$ by Corollary~\ref{coro:pseudodual}.

  Return an effective representation of the composition $\Psi \circ \widetilde{\Phi}$.
\end{algorithm}

\begin{theo}\label{theo:IsomE0}
  Algorithm~\ref{algo:IsomorphismEO} is correct and assuming GRH it requires an expected number of operations which
  is polynomial in $\log(p)$.
\end{theo}

\begin{proof}
  First we prove that the algorithm is correct. Since we know the endomorphism rings of $E_0,E_1,E_2$ we
  can compute isogenies $\varphi_i:E_0 \to E_i$ of coprime degrees by~\cite[Algo.~5]{WesoFocs21}.
  Applying Lemma~\ref{lemm:SumIdealCoprime} with $\psi = Id_{E_0}$ leads to the construction of the
  ideal $I_K$ representing an isogeny $E_0 \to E_0/( \ker(\varphi_1) \oplus \ker(\varphi_2))$.
  The correctness is then a consequence of Proposition~\ref{prop:isomcompl_correct} and Theorem~\ref{th:isom1}.
  The complexity follows from the same results, using the fact that
\cite[Algo.~5]{WesoFocs21} requires polynomial time assuming GRH, and that
constructing $I_K$ is standard $\Z$-linear algebra.  \end{proof}

%\begin{coro}\label{coro:IsomorphismEO}
%    Given $E_1', E_2'$, two maximal elliptic curves over
%    $\F_{p^2}$, with their endomorphism rings and assuming GRH, we can compute an $\F_{p^2}$-isomorphism
%    $ E_0^2 \to E_1' \times E_2'$ in polynomial time in $\log(p)$ and the size of the input.
%\end{coro}
%

We finally present a proof of Theorem~\ref{theo:main2}.

\begin{proof}[Proof of Theorem~\ref{theo:main2}]
By two calls to Algorithm~\ref{algo:IsomorphismEO}, we compute two isomorphisms
$\Phi: E_0^2  \to E_1 \times E_2  $ and $\Psi: E_0^2  \to E_1' \times E_2'$.
Then by Corollary~\ref{coro:pseudodual} we compute
the tranposed isomorphism $\widetilde{\Phi}:E_1 \times E_2  \to E_0^2  $.
Computing the composition $\Psi \circ \widetilde{\Phi}$ leads to the desired
isomorphism $E_1 \times E_2 \to E_1' \times E_2'$. The complexity statement follows from
Theorem~\ref{theo:IsomE0}.
\end{proof}

\bibliographystyle{abbrv}
\bibliography{biblio.bib}

\end{document}